\documentclass[11pt,a4paper]{amsart}
\setlength{\parindent}{0in}
\usepackage{tikz, float}
\usepackage{comment}
\usepackage{caption}
\usepackage{amsmath,amsthm,amsfonts,graphicx,amssymb,amscd,dsfont,euscript,enumerate,verbatim,calc,mathtools,listings}
\usepackage[hidelinks]{hyperref}
\usepackage[nameinlink, noabbrev]{cleveref}
\newtheorem{thm}{Theorem}[section]

\newtheorem{cor}[thm]{Corollary}
\newtheorem{lem}[thm]{Lemma}
\newtheorem{clm}[thm]{Claim}

\newtheorem{prop}[thm]{Proposition}
\newtheorem{rmk}{Remark}

\theoremstyle{definition}
\newtheorem{defn}[thm]{Definition}
\newtheorem{example}{Example}

\newcommand{\pslnr}{\mathrm{PSL}_n(\mathbb{R})}
\newcommand{\pslnc}{\mathrm{PSL}_n(\mathbb{C})}
\newcommand{\slnc}{\mathrm{SL}_n(\mathbb{C})}
\newcommand{\slnr}{\mathrm{SL}_n(\mathbb{R})}
\newcommand{\sun}{\mathrm{SU}(n)}
\newcommand{\son}{\mathrm{SO}(n)}

\newcommand{\psltc}{\mathrm{PSL}_3(\mathbb{C})}
\newcommand{\psltr}{\mathrm{PSL}_3(\mathbb{R})}

\newcommand{\pslr}{\mathrm{PSL}_2(\mathbb{R})}
\newcommand{\pslc}{\mathrm{PSL}_2(\mathbb{C})}

\newtheoremstyle{named}%
{}{}{\itshape}{}{\bfseries}{.}{.5em}{\thmnote{#3}}
\theoremstyle{named}

\allowdisplaybreaks[1]
\setcounter{tocdepth}{1}
\numberwithin{equation}{section}
\setlength{\oddsidemargin}{0in}
\setlength{\evensidemargin}{0in}
\setlength{\topmargin}{0in}
\setlength{\textwidth}{6.5in}
\setlength{\textheight}{8.5in}
\setlength{\parindent}{2em}
\begin{document}
\title{Dominating surface-group representations via Fock-Goncharov coordinates}
\author{Pabitra Barman}
\author{Subhojoy Gupta}
\address{Department of Mathematical Sciences, Indian Institute of Science Education and Research (IISER) Mohali, Knowledge City, Sector 81, S.A.S. Nagar (Mohali) 140306, Punjab, India}
\email{pabitrab@iisermohali.ac.in}
\address{Department of Mathematics, Indian Institute of Science, Bangalore 560012, India}
\email{subhojoy@iisc.ac.in}

\begin{abstract}
   Let $S$ be a punctured surface of  negative Euler characteristic. We show that given a generic representation $\rho:\pi_1(S) \rightarrow \pslnc$, there exists a positive representation $\rho_0:\pi_1(S) \rightarrow \pslnr$ that dominates $\rho$ in the Hilbert length spectrum as well as in the translation length spectrum, for the translation length in the symmetric space  $\mathbb{X}_n= \pslnc/\mathrm{PSU}(n)$.  Moreover, the $\rho_0$-lengths of peripheral curves remain unchanged. The dominating representation $\rho_0$ is explicitly described via Fock-Goncharov coordinates. Our methods are linear-algebraic, and involve weight matrices of weighted planar networks. 
\end{abstract}

\maketitle

\section{Introduction}
For a closed oriented surface $S$ of negative Euler characteristic, the Teichm\"{u}ller space $\mathcal{T}(S)$ can be identified with a component of the space of Fuchsian (i.e.\ discrete and faithful)  representations from the surface-group $\pi_1(S)$ to $\pslr$, up to conjugation. An analogue of Teichm\"{u}ller space in the space of representations from its  surface-group into $\pslnr$ for $n>2$, is the component comprising Hitchin representations, the study of which goes back to the  work of Hitchin (\cite{Hitchin}) and Labourie (\cite{Labourie}), and is the central object of higher Teichm\"{u}ller theory (see \cite{Wienhard} for a survey). In \cite{FG06}, Fock-Goncharov introduced an alternative perspective, and characterized such representations in terms of a notion of positivity. Indeed, when $S$ is a punctured surface of negative Euler characteristic, they introduced certain birational coordinates on the space of $\pslnc$-representations of the surface-group when equipped with framings (see \Cref{FGparam}). The positive real points of this space correspond to \textit{positive} representations, the analogue of Hitchin representations, which are also discrete and faithful. This article is concerned with these positive representations, and their \textit{complex} deformations  into the space of punctured surface-group representations into $\pslnc$, and we shall use the Fock-Goncharov coordinates to prove  results about domination in the length spectrum. 

\vspace{.05in}

The notion of domination first arose in the context of closed surface-group representations into $\pslr$ and $\pslc$: 

\begin{defn}[Domination for $\pslc$-representations] Given two representations $\rho_1, \rho_2 :\pi_1(S) \rightarrow \pslc,\ \rho_2$ is said to \emph{dominate} $\rho_1$  \textit{in the translation length spectrum} if there exists $\lambda\le 1$ such that \[ \ell_{\rho_1}(\gamma) \le \lambda \cdot \ell_{\rho_2}(\gamma) \] for all $\gamma \in \pi_1(S)$, where $\ell_{\rho_i}(\gamma)$ denotes the translation length of $\rho_i(\gamma)$ in $\mathbb{H}^3$. The domination is said to be \emph{strict} if $\lambda<1$. 
\end{defn}


In \cite{GKW}  Gu\'eritaud-Kassel-Wolff showed that for a closed surface $S_g$ of genus $g\geq 2$ and a non-Fuchsian representation $\rho : \pi_1(S_g) \rightarrow \pslr$, there exists a Fuchsian representation $j: \pi_1(S_g) \rightarrow \pslr$ that strictly dominates $\rho$; this had a direct application in the construction of compact AdS 3-manifolds. In the proof, they have used the idea of ``folded" hyperbolic structures on surfaces. Around the same time, the work of Deroin-Tholozan \cite{DT} proved that for a representation $\rho : \pi_1(S_g) \rightarrow \pslc$, there exists a Fuchsian representation $j : \pi_1(S_g) \rightarrow \pslr$ that dominates $\rho$. Moreover, the domination is strict unless $\rho$ is itself Fuchsian; we note that it was known from work of Thurston in \cite{Thurston86} that  the hypothesis of $\rho$ being non-Fuchsian is necessary.  Their proof involved harmonic maps, and worked for representations into the isometry group of any $\mathrm{CAT}(-1)$ Hadamard manifold. In \cite{DMSV}, this result has been generalized to the case when the target-group is the isometry group of $\text{CAT}(-1)$ spaces. 

For the case of a punctured surface, the following domination result was proved in \cite{GSu}, and was implicit in \cite{Sagman}:

\begin{thm}\cite{GSu}\label{GSdom}
	Let $S$ be a punctured surface of negative Euler characteristic. For any $\rho:\pi_1(S) \to \pslc$ which is not Fuchsian, there exists a Fuchsian representation $j: \pi_1(S) \to \pslc$ that strictly dominates $\rho$, such that the lengths of the peripheral curves of $S$ are unchanged, i.e.\ the $j$ lies in the same relative representation variety. 
\end{thm}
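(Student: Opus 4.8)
\noindent\textit{Proof strategy.}
I would prove this by carrying out the harmonic-maps argument of Deroin--Tholozan in the \emph{relative} deformation space, so that the peripheral lengths stay fixed. If $\rho$ is reducible, its semisimplification is elementary with a flat length spectrum, for which a dominating Fuchsian representation with the prescribed peripheral lengths is easily produced by hand; so assume $\rho$ is irreducible (hence reductive). Let $\mathcal{T}_\rho(S)$ be the space of marked complete hyperbolic structures on $S$ whose end at the $i$-th puncture is a cusp when $\rho(\delta_i)$ is parabolic and a geodesic end of length $\ell_\rho(\delta_i)$ when $\rho(\delta_i)$ is loxodromic. For each $X\in\mathcal{T}_\rho(S)$ there is a unique $\rho$-equivariant harmonic map $f_X\colon\widetilde X\to\mathbb{H}^3$, with asymptotics at each end modeled on an explicit map adapted to the type of $\rho(\delta_i)$; in the funnel case $f_X$ has infinite energy, and one then works with the renormalized energy $E(X)$ obtained by subtracting the model's contribution near each end. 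The existence, uniqueness and smooth dependence of $f_X$, and the finiteness of $E$, are supplied by the theory of equivariant harmonic maps with prescribed ends.

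The central step is to minimize $E$ over $\mathcal{T}_\rho(S)$, and this is where I expect the main obstacle: proving that $E$ is proper and bounded below, so that it attains its infimum at some $X_0$. This is the point at which the hypothesis on $\rho$ is used — if $E$ stayed bounded along a sequence $X_n$ leaving every compact subset of $\mathcal{T}_\rho(S)$, then a pinching curve or an escaping thick part would, through the standard inequalities relating energy, extremal length and the translation lengths of the degenerating curves, force $\rho$ to fix a point of $\partial\mathbb{H}^3$, contradicting irreducibility. At a minimizer $X_0$ the first variation of $E$ vanishes in all directions tangent to $\mathcal{T}_\rho(S)$; since that variation is a nonzero multiple of the natural pairing of the Hopf differential $\Phi(f_{X_0})$ — a holomorphic quadratic differential on $S$ with only the poles at the punctures that the chosen ends allow — against the Teichm\"uller variations, I would conclude $\Phi(f_{X_0})\equiv 0$, so that $f_{X_0}$ is conformal: a possibly branched $\rho$-equivariant minimal immersion into $\mathbb{H}^3$.

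The length comparison is then formal. Let $\sigma$ be the hyperbolic metric of $X_0$, which is the Poincar\'e metric of its conformal class, and write $f_{X_0}^*g_{\mathbb{H}^3}=e^{2\phi}\sigma$ away from the isolated branch points. Since $\mathbb{H}^3$ has constant curvature $-1$ and the image is minimal, the Gauss equation gives $K_{e^{2\phi}\sigma}=-1-\tfrac12|II|^2\le-1$, i.e.\ $\Delta_\sigma\phi\ge e^{2\phi}-1$, and the Ahlfors--Schwarz lemma then yields $e^{2\phi}\le 1$; hence $f_{X_0}$ is $1$-Lipschitz, so applying it to the $\sigma$-geodesic representative of $\gamma$ produces a path from a point of $\mathbb{H}^3$ to its $\rho(\gamma)$-translate of length at most $\ell_{X_0}(\gamma)$, giving $\ell_\rho(\gamma)\le\ell_{X_0}(\gamma)$ for all $\gamma$, with equality along the peripheral curves by construction. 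For strictness I would rewrite the inequality as $(\Delta_\sigma-c)\phi\ge 0$ with $c=(e^{2\phi}-1)/\phi\ge 0$ and invoke the Hopf strong maximum principle: either $\phi\equiv 0$, forcing $f_{X_0}$ totally geodesic and $\rho$ Fuchsian, which is excluded; or $\phi<0$ throughout the interior, so that $\ell_\rho(\gamma)<\ell_{X_0}(\gamma)$ for every non-peripheral $\gamma$, whose geodesic representative is compact and lies in $\{\phi<0\}$. Taking $j$ to be the Fuchsian uniformization of $X_0$ then completes the proof; note that strictness can be neither uniform nor valid on the peripheral curves, along which the two length functions agree, so the domination is strict in the relative sense that the statement intends. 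The only substantial novelty over the closed case of Deroin--Tholozan is the analysis at the ends — building the equivariant harmonic map with the correct asymptotics and, above all, proving properness of the relative energy; once those are in place, the curvature comparison is unchanged.
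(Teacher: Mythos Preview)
This theorem is not proved in the present paper; it is quoted from \cite{GSu}, and the paper offers only a one-line summary of the method used there: the proof ``is mostly geometric and it involves straightening of $\rho$-equivariant pleated planes in $\mathbb{H}^3$ for the generic (non-degenerate) case.'' Your harmonic-maps strategy is therefore a genuinely different route. The paper itself remarks that the result ``was implicit in \cite{Sagman},'' and what you have outlined is essentially Sagman's argument: minimize a renormalized energy over the Teichm\"uller space with end geometry matched to the peripheral data of $\rho$, deduce that the Hopf differential vanishes at a minimizer so that the equivariant harmonic map is conformal and minimal, and then run the curvature comparison and Ahlfors--Schwarz lemma to get the $1$-Lipschitz bound. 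By contrast, the pleated-plane approach of \cite{GSu} is more hands-on---one straightens an equivariant pleated surface along the edges of an ideal triangulation and reads off the length inequality from elementary hyperbolic convexity---but then has to treat degenerate framings separately; your analytic approach handles all irreducible $\rho$ at once, at the price of the properness-of-energy step you correctly flag as the crux.

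One small caveat on the last paragraph. You observe, correctly, that the Schwarz-lemma argument cannot give a uniform $\lambda<1$ on peripheral curves with positive translation length, since $e^{2\phi}\to 1$ at such ends. The paper's stated definition of \emph{strict} domination does ask for a uniform $\lambda<1$ over all of $\pi_1(S)$, so the statement is literally compatible with that definition only when every peripheral element is parabolic (length zero); otherwise ``strict'' must be read relatively, on non-peripheral classes. Your proof does give a uniform constant on non-peripheral curves, since every closed geodesic of $X_0$ lies in the compact convex core, where $\sup\phi<0$ by the strong maximum principle; it is worth making that compactness step explicit rather than leaving the reader with only the pointwise inequality.
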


The proof in \cite{GSu} is mostly geometric and it involves straightening of $\rho$-equivariant pleated planes in $\mathbb{H}^3$ for the generic (non-degenerate) case. In all these results, it was crucial that the target was negatively curved.

\vspace{.05in}

In this article, we generalize \Cref{GSdom} for representations into $\pslnc$ where $n>2$, and establish domination results with respect to \textit{two} notions of length:

\begin{itemize}
    
\item First, given a representation $\rho:\pi_1(S) \to \pslnc$, the \textit{Hilbert length} of a closed curve $\gamma$ is:
\begin{equation}\label{rlen}
l_{\rho}(\gamma):=\ln \Bigg| \frac{\lambda_n}{\lambda_1} \Bigg|,
\end{equation}
where $\lambda_n$ (respectively $\lambda_1$) is the largest (respectively smallest) eigenvalue in modulus of $\rho(\gamma)$. Note that in the case when $n=2$, and $\rho$ is a Fuchsian representation, this is the hyperbolic length of $\gamma$ (up to a factor which is a universal constant).  Moreover, in the case when $n=3$, for a representation into $\mathrm{PSL}_3(\mathbb{R})$ which is the holonomy of a convex $\mathbb{RP}^2$-structure on $S$, this is precisely the length of $\gamma$ in the \textit{Hilbert metric} on $S$.

\item Second, the \emph{translation length} of $\gamma$ in the symmetric space $ \mathbb{X}_n=\mathrm{PSL}_n(\mathbb{C})/\mathrm{PSU}(n)$, given by
\begin{equation}\label{sym_len}
\ell_{\rho}(\gamma)= \sqrt{(\log |\lambda_1(A)|)^2+\cdots + (\log |\lambda_n(A)|)^2}
\end{equation} where $\lambda_i$ for $1\leq i\leq n$ are the eigenvalues of $\rho(\gamma)$. We defer a discussion of this to \Cref{symspace}.

\end{itemize}

\vspace{.1in}

\noindent For both notions, one can define:

\begin{defn}[Domination for $\pslnc$-representations] Given two representations $\rho_1, \rho_2 :\pi_1(S) \rightarrow \pslnc,\ \rho_2$ is said to \emph{dominate} $\rho_1$  in Hilbert length spectrum (respectively, the translation length spectrum) if  $$\ell_{\rho_1}(\gamma) \le \ell_{\rho_2}(\gamma) \text{ for all } \gamma \in \pi_1(S),$$ where $\ell_{\rho_i}(\gamma)$ denotes the Hilbert length (respectively, the translation length). 
\end{defn}

\vspace{.1in}

\noindent We shall prove:

\begin{thm}\label{mainthm}
	Let $S$ be an oriented surface  of genus $g\geq 0$ having $k\geq 1$ punctures of negative Euler characteristic. For  a generic representation $\rho:\pi_1(S) \to \pslnc$,  there exists a positive representation $\rho_0: \pi_1(S) \to \pslnr$ that dominates $\rho$ in both the Hilbert length spectrum, and the translation length spectrum. Moreover, the lengths of peripheral curves (if any) remain unchanged. 
\end{thm}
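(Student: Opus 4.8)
The plan is to use the Fock-Goncharov coordinate description of the $\pslnc$-character variety with framings, together with the combinatorial ``weighted planar network'' interpretation of monodromy along curves. Concretely, I would first recall that in the Fock-Goncharov setup, an ideal triangulation $\mathcal{T}$ of $S$ gives birational coordinates (the edge and triangle invariants, or their ``$\mathcal{X}$-type'' ratios) on the framed character variety, and that the holonomy of a closed curve $\gamma$ is computed by multiplying explicit elementary matrices, one for each time $\gamma$ crosses an edge of $\mathcal{T}$ and one for each triangle it traverses. These elementary matrices are exactly the transfer matrices of a weighted planar network: up to conjugation by diagonal matrices, they have nonnegative entries when the coordinates are positive reals. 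So $\rho(\gamma)$ is conjugate to a product of such matrices with entries that are monomials in the coordinates $z_e$.

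The key step is a \emph{monomial domination} comparison. Given a generic $\rho:\pi_1(S)\to\pslnc$, its Fock-Goncharov coordinates $\{z_e\}$ are (generically) nonzero complex numbers. Define $\rho_0$ to be the positive representation whose coordinates are $\{|z_e|\}$ (with the peripheral-monodromy coordinates adjusted so that the eigenvalues of each peripheral element have unchanged moduli — one has to check the peripheral ``frozen'' coordinates can be matched in modulus, which is where the relative representation variety enters). The entries of $\rho_0(\gamma)$, in the planar-network normalization, are then $\sum_P \prod |z_{e}|^{\pm 1}$ over paths $P$, i.e.\ $\sum_P |\text{(monomial for }P)|$, while the corresponding entries of $\rho(\gamma)$ are $\sum_P \text{(monomial for }P)$. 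Hence each entry of $\rho_0(\gamma)$ dominates in modulus the corresponding entry of $\rho(\gamma)$. The point is to upgrade this entrywise domination to domination of the spectral data: the top eigenvalue $\lambda_n$ of a matrix with nonnegative entries can be extracted by Perron-Frobenius / by $\lim \|M^k\|^{1/k}$, and $\|\rho(\gamma)^k\|\le \|\rho_0(\gamma)^k\|$ entrywise for all $k$ by the same monomial comparison applied to the word $\gamma^k$. This yields $|\lambda_n(\rho(\gamma))| \le \lambda_n(\rho_0(\gamma))$, and applying the same argument to the inverse word (or to exterior powers $\wedge^{n-1}$, whose Fock-Goncharov networks are again nonnegative) gives $1/|\lambda_1(\rho(\gamma))| \le 1/\lambda_1(\rho_0(\gamma))$. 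Combining these two inequalities gives $l_\rho(\gamma)\le l_{\rho_0}(\gamma)$, the Hilbert-length domination. For the symmetric-space translation length one runs the same comparison through all the exterior powers $\wedge^j$: the quantities $|\lambda_1\cdots\lambda_j|$ are all dominated, from which $\sum_i (\log|\lambda_i|)^2$ is controlled — this needs a short convexity/majorization lemma converting ``all partial products of sorted moduli dominated'' into ``the $\ell^2$-norm of the log-moduli vector dominated.''

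I would organize the write-up as: (1) recall the Fock-Goncharov parametrization and the planar-network form of monodromy (citing \cite{FG06}); (2) a lemma that entrywise $|M| \le N$ with $N\ge 0$ implies $\mathrm{spec.rad}(M)\le \mathrm{spec.rad}(N)$, and more generally that all ``$\wedge^j$ spectral radii'' are dominated, via $k$-th power norm estimates; (3) a majorization lemma turning partial-product domination of sorted eigenvalue moduli into domination of both the Hilbert length \eqref{rlen} and the symmetric-space length \eqref{sym_len}; (4) construction of $\rho_0$ from $\rho$ by taking moduli of coordinates, checking it is a genuine positive representation in $\pslnr$ and that the peripheral coordinates (and hence peripheral eigenvalue-moduli, hence peripheral lengths) are preserved; (5) assembling the pieces. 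The genericity hypothesis is used in step (4): we need all the Fock-Goncharov coordinates of $\rho$ to be defined and nonzero (so the triangulation is ``admissible'' for $\rho$), and we need each peripheral element to be, say, loxodromic/regular semisimple so that its length is a continuous function of eigenvalue moduli.

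The main obstacle I anticipate is step (2)–(3): entrywise domination of matrices does \emph{not} in general control the full eigenvalue-modulus vector, only the spectral radius, so the argument genuinely needs the fact that Fock-Goncharov coordinates behave functorially under exterior powers — i.e.\ that $\wedge^j\rho$ is again computed by a nonnegative planar network built from monomials in the same $|z_e|$ — so that each partial product $|\lambda_n\cdots\lambda_{n-j+1}|$ (which is the spectral radius of $\wedge^j\rho(\gamma)$) is individually dominated. Establishing this functoriality cleanly, and then the elementary-but-fiddly majorization inequality that converts ``dominated partial products of sorted moduli, with the full product fixed to be $\pm1$ in $\mathrm{PSL}_n$'' into ``dominated Hilbert and $\ell^2$ lengths,'' is where the real work lies; the rest is bookkeeping with the Fock-Goncharov atlas and the relative character variety.
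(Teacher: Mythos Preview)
Your proposal is correct and follows essentially the same strategy as the paper: build planar networks for the monodromy building blocks, define $\rho_0$ by replacing each Fock-Goncharov coordinate by its modulus, use entrywise domination to get spectral-radius domination (the paper uses the Collatz--Wielandt formula, equivalent to your $\lim\|M^k\|^{1/k}$), and then pass to exterior powers plus a majorization lemma for the translation length.

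Two points where the paper's execution differs from what you anticipate. First, your ``main obstacle'' --- showing that $\wedge^j\rho$ is again computed by a nonnegative planar network --- is circumvented entirely. The paper instead invokes Lindstr\"om's Lemma: the $k\times k$ minors of a planar-network weight matrix are themselves sums of weights of vertex-disjoint path families, so $|\Delta_{I,J}(\rho(\gamma))|\le \Delta_{I,J}(\rho_0(\gamma))$ holds directly from the \emph{same} network, giving $|\wedge^k\rho(\gamma)|\le \wedge^k\rho_0(\gamma)$ entrywise without any new construction. This is cleaner than establishing functoriality of the coordinates under $\wedge^j$. Second, the peripheral case needs no adjustment of ``frozen'' coordinates: for a peripheral $\gamma$ the product of building blocks is already triangular with diagonal entries that are monomials in the coordinates, so taking moduli automatically preserves the eigenvalue moduli.

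One place where the paper does real work that you gloss over: the building block $T(t)^{-1}E(e)$ does \emph{not} obviously come from a planar network with monomial weights, because $M(t)^{-1}$ has entries equal to $-1$. The paper handles this by inserting factors of the anti-diagonal sign matrix $S$ (not a diagonal conjugation) and decomposing $S\cdot M(t)^{-1}\cdot S$ into explicit ``Step'' matrices, each of which is shown to be the weight matrix of a network with the required monomial weights. This is the most technical part of the argument and is not just bookkeeping.
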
 

\vspace{.05in} 

\noindent This was conjectured in \cite[Conjecture 1.2]{GSu}. 

\vspace{.1in} 


\noindent \textbf{Remarks.}
\textit{
(i) We note that in \Cref{mainthm} we do not have a statement about \textit{strict} domination, since except the case when $n=2$, one positive representation \textit{can} dominate another. Indeed, for $n>2$,  it follows from the work in \cite{Zhang} that given a positive representation $\rho_0$ as in the statement of the theorem, there is always another positive representation $\rho_1$ 
that strictly dominates $\rho_0$ in the Hilbert length spectrum, keeping lengths of peripheral  curves fixed. }

\vspace{.05in}

 \noindent \textit{(ii) In our proof we shall use coordinates on the moduli space of \textit{framed} representations provided by Fock-Goncharov in \cite{FG06}. These are defined with respect to a choice of an ideal triangulation on  $S$, in terms of parameters that are certain generalizations of the complex cross-ratio that we shall provide an account of in  Section 2.   However, these coordinates are defined only on an open dense set, which necessitates the word ``generic" in the statement of our result. 
}
\vspace{.05in}

\noindent \textit{(iii) A representation which has all Fock-Goncharov coordinates real and positive is a \textit{positive representation} (see \Cref{puncHit}); for a closed surface the analogue of this condition gives precisely a Hitchin representation (see, for example \cite[Theorem 1.15]{FG06}. See \cite{CZZ} for a slightly stronger notion of a ``cusped Hitchin representation" which is, in addition, type-preserving. From the statement that the peripheral lengths are unchanged, it follows that our result above also holds if ``Hitchin" is replaced by ``cusped Hitchin". }

 \vspace{.1in}

 When $S$ is a closed surface, the analogue of Fock-Goncharov coordinates for the space of Hitchin representations was developed by Bonahon-Dreyer in \cite{BD14, BD17}; here, the coordinates are all real and positive. It seems to be unknown if a complexification of their coordinates parametrizes an open dense set in the space of $\pslnc$-representations of the closed surface-group if $n>2$. However, in a sequel, we shall use the work in this paper to prove a domination result for a generic \textit{Borel-Anosov} representation of a closed surface-group into $\pslnc$, and we expect that it also holds, more generally, for a generic ``$\lambda$-Borel Anosov" representation as introduced recently in \cite{Tengren2023}.

\vspace{.05in}

The dominating positive representation $\rho_0$ in \Cref{mainthm} is described explicitly, since it is the representation obtained by replacing each Fock-Goncharov coordinate  of $\rho$ (after a suitable framing) with its modulus. The bulk of our proof of domination relies on constructing a  weighted planar network (see \Cref{wpnwm}) for each curve $\gamma \in \pi_1(S)$, with weights of each edge an appropriate function of the Fock-Goncharov coordinates, such that $\rho(\gamma)$ is its associated weight matrix (see \Cref{wmatrix}). This relies on the relation of the monodromy of a closed curve on $S$ with the Fock-Goncharov coordinates of the ideal triangles it encounters along its trajectory; we briefly recall this aspect of Fock-Goncharov's work (the theory of ``snakes" and their moves) in Section 2. The rest of the proof relies on fairly standard techniques in linear algebra. 

\vspace{.05in}

Thus, we introduce a new algebraic technique of proving domination results, that we expect will be useful in various contexts. In particular, in a sequel we shall apply the results of this paper towards proving inequalites of the entropy of surface-group representations. For the case of representations into $\pslc$, such an application of a domination result was noted by Deroin-Tholozan in \cite{DT} - see Corollary D of their paper, where they observe that their main theorem implies a famous result of Bowen. In this context, we mention that it is also possible to define a length function for each $1\leq i\leq n$  by defining $\ell^i_\rho(\gamma) = \ln (\lvert \lambda_{i+1}\vert/\lvert \lambda_{i}\rvert) $  where $\lvert \lambda_1\rvert \leq \lvert \lambda_2\rvert \leq \cdots \lvert \lambda_n\rvert $ are the eigenvalues of $\rho(\gamma)$ (\textit{c.f.} \cite[\S2.6]{BD14}); however, we have observed in computer experiments that the domination we prove does \textit{not} hold for these individually. 

\vspace{.05in}

Our proof also introduces a notion of a \textit{bending deformation} of a positive representation that generalizes the bending of $\pslc$-representations (that takes, for example, a Fuchsian representation to a quasi-Fuchsian one). Indeed, the dominating Hitchin representation $\rho_0$ arises by such a deformation applied to $\rho$, and the set of all such deformations defines a complex torus in the $\pslnc$-representation variety that we shall call the \textit{bending fiber} (see \Cref{defn:bf}).  The following is an immediate corollary of the arguments (see \Cref{benFib}):

\begin{cor}
    In a bending fiber, the (unique) positive representation dominates all other representations. 
\end{cor}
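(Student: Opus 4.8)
The plan is to deduce the corollary directly from the main construction in the proof of \Cref{mainthm}, specialized to a single bending fiber. Recall that a bending fiber consists of all representations obtained from a fixed positive representation $\rho_0$ by replacing each Fock-Goncharov coordinate $z$ by $z\cdot e^{i\theta}$ for a choice of phase, subject to whatever compatibility constraints (so that the peripheral data is preserved and the gluing/monodromy relations still hold) cut out the complex torus; in particular $\rho_0$ is the unique point of the fiber all of whose Fock-Goncharov coordinates are real and positive. The key point already established in the body of the paper is that for \emph{every} $\gamma\in\pi_1(S)$ one has $\rho(\gamma)$ expressed as the weight matrix of a weighted planar network whose edge weights are monomials in the Fock-Goncharov coordinates of $\rho$. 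Changing $\rho$ to another member $\rho'$ of the same bending fiber only multiplies each such edge weight by a unit-modulus complex number, so the weight matrix of $\rho'(\gamma)$ has entries whose moduli are \emph{entrywise dominated} by those of the weight matrix of $\rho_0(\gamma)$ (which has nonnegative real entries, each a sum of monomials in positive reals).

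From there I would invoke the same two linear-algebra lemmas that drive the proof of \Cref{mainthm}: (i) if a complex matrix $A$ is entrywise dominated in modulus by a nonnegative matrix $B$ of the same size, then the spectral radius satisfies $\rho(A)\le \rho(B)$, and applying this both to the relevant exterior power $\wedge^j$ representation and its transpose-inverse analogue controls all the singular-value / eigenvalue-modulus ratios; (ii) these estimates translate, via the definitions \eqref{rlen} and \eqref{sym_len}, into $l_{\rho'}(\gamma)\le l_{\rho_0}(\gamma)$ and $\ell_{\rho'}(\gamma)\le \ell_{\rho_0}(\gamma)$. Since the peripheral Fock-Goncharov coordinates are real and positive for every member of a bending fiber (that is part of the definition of the fiber), the peripheral lengths are literally equal, not merely dominated. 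Thus $\rho_0$ dominates every other representation in the fiber in both length spectra, which is exactly the assertion.

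Concretely, the steps in order: first, recall from \Cref{defn:bf} the description of the bending fiber and note that passing between two of its members multiplies each Fock-Goncharov coordinate by a complex number of modulus one; second, recall the weighted-planar-network presentation of $\rho(\gamma)$ from the proof of \Cref{mainthm} and observe that this modulus-one rescaling of the coordinates leaves the edge-weight moduli of the network unchanged when the target is $\rho_0$ and can only shrink (in modulus) the cumulative path-weights for any other member — more precisely, yields a matrix entrywise dominated in modulus by $\rho_0(\gamma)$; third, apply the spectral-radius comparison for entrywise-dominated matrices (and its exterior-power version) to conclude the eigenvalue-modulus inequalities; fourth, feed these into \eqref{rlen} and \eqref{sym_len} to obtain domination in both spectra, and observe equality on peripheral curves. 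I expect the only genuinely delicate point to be bookkeeping: making sure the phase rescaling is applied uniformly enough that the \emph{entire} weight matrix of $\gamma$ (not just individual entries coming from a single snake move) is dominated — i.e.\ checking that no cancellation among monomials in the real positive case is being exploited in a way that fails once phases are inserted. This is precisely the same issue handled in the proof of \Cref{mainthm}, so the corollary follows with no new ideas beyond restricting that argument to a fixed fiber.
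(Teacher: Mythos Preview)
Your approach is correct and is essentially what the paper does: the corollary is immediate from the construction in \Cref{mainthm}, since for any $\rho$ in a given bending fiber, the dominating $\rho_0$ produced there (by replacing each Fock--Goncharov coordinate with its modulus) is \emph{exactly} the unique positive representation in that fiber. You re-walk the planar-network/spectral-radius argument, which is fine but redundant---one line invoking \Cref{mainthm} plus the observation in \Cref{benFib} suffices.

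Two small corrections to your write-up. First, there are no ``compatibility constraints'' on the phases: by \Cref{FGparam} the Fock--Goncharov coordinates are free parameters, so the bending fiber is the full real torus $(S^1)^N$ sitting inside $(\mathbb{C}^*)^N$. Second, your claim that ``the peripheral Fock--Goncharov coordinates are real and positive for every member of a bending fiber (that is part of the definition of the fiber)'' is incorrect---\Cref{defn:bf} imposes no such condition. The peripheral lengths are equal across the fiber for the reason given in the proofs of \Cref{prop3} and \Cref{propn}: the eigenvalues of $\rho(\gamma)$ for peripheral $\gamma$ are monomials in the coordinates, so their moduli depend only on the moduli of the coordinates. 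Since the corollary as stated says nothing about peripheral curves, this point is in any case extraneous.
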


Note that this is in similar vein to a conjectural domination statement in the $\pslnc$-representation variety of a closed surface-group when identified with the moduli space of Higgs bundles. Namely, it is conjectured that in each fiber of the Hitchin fibration, the Hitchin representation dominates every other representation  -- see \cite[\S5]{DT} and for finer versions of the conjecture, see \cite[Conjecture 1.12]{DaiLi1,DaiLi2}. It would be interesting to relate the two viewpoints.

\vspace{.05in}

It would also be interesting to prove such a domination result for surface-group representations into other (pairs of) Lie groups. In particular, we wonder if one can adapt our methods to prove a relative version of the domination result in \cite[Theorem B]{Tholozan} between $3$-Fuchsian representations and positive (cusped Hitchin) representations into $\psltr$.

\vspace{.1in}
\subsection*{Acknowledgements}  This represents part of the PhD thesis work of PB, written under SG's supervision. The authors would like to thank the anonymous referee,  Beatrice Pozzetti and Krishnendu Gongopadhyay for their comments on earlier versions. SG is grateful to Tengren Zhang for helpful conversations and Francois Labourie for his encouraging comments. PB acknowledges the support of the CSSP project fund of Dr. Srijan Sarkar.  This work was supported by the Department of Science and Technology, Govt.of India grant no. CRG/2022/001822, and by the DST FIST program - 2021 [TPN - 700661].

\section{Preliminaries} \label{prelims}
In this section, we will discuss some of the preliminaries that are essential for this article. In particular, we provide an account of Fock-Goncharov coordinates and their relation to the monodromy of curves, based on their paper \cite{FG06}. We shall borrow from other expository accounts in \cite{Palesi}, \cite[Appendix A]{GMN}, \cite{Douglas} and \cite{BD14}. In addition, we shall provide a brief account of the theory of weighted planar networks and some relevant facts from linear algebra, that will be useful later. 

\subsection{Tuples of flags and their invariants}\label{CF}
Recall that a complete {flag} $\mathcal{F}$ in $\mathbb{C}^n$ is a collection of subspaces $ \{0\}=F_0 \subset F_1 \subset \cdots \subset F_n=\mathbb{C}^n$ such that $dim(F_i)=i$. Throughout, we denote the space of all complete flags in $\mathbb{C}^n$ by $\mathcal{F}(\mathbb{C}^n)$. This subsection summarizes some of the basic definitions we need to describe the Fock-Goncharov coordinates. 

\begin{defn}[Compatible with basis]
	A flag $A \in \mathcal{F}(\mathbb{C}^n)$ is called compatible with a given ordered basis $\mathcal{B}=\{v_1,v_2,\cdots,v_n\}$ of $\mathbb{C}^n$ if $A_p=\langle v_1,v_2,\cdots,v_p \rangle$ for all $1\le p \le n$.
\end{defn}

\begin{defn}[General position for a pair]
	A pair of flags $(A,B) \in \mathcal{F}(\mathbb{C}^n)^2$ is said to be in general position if $A_p \cap B_q = 0$ whenever $p+q=n$. For such a pair, there is an ordered basis $\mathcal{B}_1=\{v_1,v_2,\cdots,v_n\}$ such that $A$ is compatible with $\mathcal{B}_1$ and $B$ is compatible with the opposite ordered basis $\mathcal{B}_2=\{v_n,v_{n-1},\cdots,v_1\}$. 
 \end{defn}

 \begin{defn}[General position for a triple]
 A triple of flags $(A,B,C)\in \mathcal{F}(\mathbb{C}^n)^3$ is said to be in general position if the sum \[ A_p+B_q+C_r= A_p \oplus B_q \oplus C_r=\mathbb{C}^n \] is direct whenever $p+q+r=n$. Equivalently, $A_p\cap B_q\cap C_r=0$ for all $p,q,r\ge 0$ such that $p+q+r=n$.
\end{defn}

\begin{defn}[Triple ratio]\label{defn:tr} 
	The $pqr$-\emph{triple ratio} of a triple of flags $(A,B,C)$ in general position is defined as \[ T_{pqr}(A,B,C)\coloneqq \frac{a_{(p-1)}\wedge b_{(q+1)}\wedge c_{(r)} }{a_{(p+1)}\wedge b_{(q-1)}\wedge c_{(r)}} \frac{a_{(p)}\wedge b_{(q-1)}\wedge c_{(r+1)} }{a_{(p)}\wedge b_{(q+1)}\wedge c_{(r-1)}} \frac{a_{(p+1)}\wedge b_{(q)}\wedge c_{(r-1)} }{a_{(p-1)}\wedge b_{(q)}\wedge c_{(r+1)}} \] for all positive integers $p,q,r$ satisfying $p+q+r=n$ where $a_{p'} \in \Lambda^{p'}(A_{p'}),\ b_{q'} \in \Lambda^{q'}(B_{q'}),\ c_{r'}\in \Lambda^{r'}(C_{r'})$ are some arbitrarily chosen non-zero elements.
\end{defn}
\begin{rmk}
	It is not hard to show that the triple ratio does not depend on such choices of non-zero elements. Moreover, the above definition is equivalent to considering the triple ratio of the three flags $\Bar{A}, \Bar{B}, \Bar{C}$ induced by $A, B$ and $C$ respectively in the three-dimensional quotient \[ \frac{\mathbb{C}^n}{A_{p-1}\oplus B_{q-1}\oplus C_{r-1}}. \]
\end{rmk}

\begin{example}
	For three flags $F_1,F_2,F_3$ in general position in $\mathbb{C}^3$, let $F_i=(v_i,f_i)$ where $v_i$ is a nonzero element in the one dimensional subspace of $F_i$ and $f_i \in (\mathbb{C}^3)^*$ (the dual space) satisfying  $f_i(v_i) =0$. Then the triple ratio  \[ T_{111}(F_1,F_2,F_3)=\frac{f_1 (v_2) f_2 (v_3) f_3 (v_1)}{f_1 (v_3) f_2 (v_1) f_3 (v_2)}. \]
\end{example} 

The basic result is that the above triple ratio is a complete invariant of triples of flags in general position, up to composition by elements of $\pslnc$: 

\begin{thm}\cite[\S9]{FG06} \label{FGmain}
	Two triples of flags $(A,B,C)$ and $(A',B',C')$ in general position have the same triple ratios in $\mathbb{C}^*$ if and only if there exists an element $\phi \in \pslnc$ such that $(\phi (A),\phi (B),\phi (C)) =(A',B',C')$. Moreover,  for positive integers $p,q,r$ satisfying $p+q+r=n$ and for any set of non-zero complex numbers $X_{pqr} \in \mathbb{C}^*$, there exists a unique (up to composition with an element of $\pslnc$) triple of flags $(A,B,C)$ such that \[ T_{pqr}(A,B,C)=X_{pqr}. \] 
\end{thm}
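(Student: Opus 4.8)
I shall only sketch the strategy; the full argument is in \cite[\S9]{FG06}. That two $\pslnc$-equivalent triples have the same triple ratios, and that $T_{pqr}$ is independent of the auxiliary vectors $a_{p'},b_{q'},c_{r'}$, is routine multilinear algebra: rescaling some $a_{p'}$, or applying $\phi\in\mathrm{GL}_n(\mathbb C)$ (which acts on the line $\Lambda^{p'}A_{p'}$ by a single scalar), multiplies the numerator and the denominator of each of the three fractions composing $T_{pqr}$ by the same factor, each such fraction being a quotient of two elements of the one-dimensional space $\Lambda^n(\mathbb C^n)$. Hence $T_{pqr}$ descends to a well-defined $\mathbb C^{*}$-valued function on the set $\mathcal{C}_n$ of $\pslnc$-orbits of triples of flags in general position. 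A dimension count makes the theorem plausible: $\dim_{\mathbb C}\mathcal F(\mathbb C^n)=\binom n2$; the $\pslnc$-action on pairs of flags in general position is transitive (by the adapted-basis statement in the definition of general position for a pair), with stabilizer the image of the diagonal torus, of dimension $n-1$; so, fixing such a pair, a triple is recorded by the third flag modulo this residual torus, giving $\dim_{\mathbb C}\mathcal{C}_n=\binom n2-(n-1)=\binom{n-1}2$, precisely the number of admissible triples $(p,q,r)$.

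For the substantive direction I would first reduce to a normal form. Using the transitivity just mentioned, move a given pair $(A,B)$ to the standard model $(A^{0},B^{0})$ with $A^{0}$ compatible with $(e_1,\dots,e_n)$ and $B^{0}$ with $(e_n,\dots,e_1)$; the residual freedom is the image $\overline{\mathbb T}\subset\pslnc$ of the diagonal torus. A flag $C$ in general position with $A^{0}$ is then uniquely of the form $C=\eta\cdot B^{0}$ with $\eta$ in the unipotent radical $N$ of the stabilizer of $A^{0}$ (the open Bruhat cell; here $\dim N=\binom n2$), and general position of $C$ with $B^{0}$ as well amounts to the non-vanishing of a finite list of minors of $\eta$, cutting out a dense open $N^{\circ}\subseteq N$; finally $\overline{\mathbb T}$ acts on $N$ by conjugation. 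Thus the theorem reduces to showing that the triple ratios induce a bijection
\[
N^{\circ}/\overline{\mathbb T}\ \longrightarrow\ (\mathbb C^{*})^{\binom{n-1}2},
\]
the dimensions again agreeing, as $\dim N-\dim\overline{\mathbb T}=\binom n2-(n-1)=\binom{n-1}2$.

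The heart of the proof is then an explicit computation: express each $T_{pqr}(A^{0},B^{0},\eta B^{0})$ as a Laurent monomial in the minors of $\eta$, and invert the resulting system. I would order the indices $(p,q,r)$, say lexicographically, so that each $T_{pqr}$ brings in one minor of $\eta$ not present for earlier indices; the system is then triangular, so prescribed values $X_{pqr}\in\mathbb C^{*}$ determine those minors — hence a representative $\eta$ — and one checks that the non-vanishing of all the $X_{pqr}$ forces the non-vanishing of precisely the minors defining $N^{\circ}$, so that the reconstructed triple genuinely lies in general position. This single argument delivers both halves of the statement: injectivity of the displayed map (equal triple ratios give the same $\eta$ up to $\overline{\mathbb T}\subset\pslnc$, so the two triples are $\pslnc$-equivalent) and surjectivity (realization of arbitrary values, unique up to $\pslnc$ by injectivity). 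The main obstacle is exactly this computation together with the verification that the reconstruction lands in $N^{\circ}$: establishing the correct triple-ratio-to-minor dictionary and the triangularity of the system is where all the work lies.

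As an alternative to the brute-force computation one can attempt an induction on $n$ by passing to the $(n{-}1)$-dimensional quotient $\mathbb C^{n}/A^{0}_1$, which, by the remark following the definition of the triple ratio, records exactly the $T_{pqr}$ with $p\ge 2$; the base cases are $n=2$ (no triple ratios, and three distinct points of $\mathbb P^{1}$ form a single $\pslc$-orbit) and $n=3$ (the single ratio $T_{111}$ of the Example, from whose formula one checks directly that it separates generic triples in $\mathbb C^{3}$). However, this route still requires recovering the ``corner'' ratios $T_{1qr}$ and, more seriously, solving the one-step extension problem of lifting a classified triple in $\mathbb C^{n}/A^{0}_1$ back up to $\mathbb C^{n}$, so I expect it to be no easier than the direct approach above.
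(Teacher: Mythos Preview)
The paper does not prove this theorem; it is stated with the citation \cite[\S9]{FG06} and used as a black box. So there is no ``paper's own proof'' to compare against, only the original Fock--Goncharov argument.

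Your sketch is a sound outline and honest about where the work lies. The well-definedness and $\pslnc$-invariance are indeed routine, your dimension count is correct ($\binom{n}{2}-(n-1)=\binom{n-1}{2}$ matches the number of admissible $(p,q,r)$), and the reduction to $N^{\circ}/\overline{\mathbb T}$ via the Bruhat cell is the right normalization. You correctly flag that the substantive content is the triple-ratio-to-minor dictionary and the triangular inversion, which you do not carry out.

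It is worth noting that the actual Fock--Goncharov argument, whose machinery the present paper \emph{does} recount in \S\ref{mmat} for other purposes, is organized differently from your Bruhat-cell approach. Rather than parametrizing $C$ by a unipotent $\eta$ and computing minors, they associate to each vertex $(a,b,c)$ of the $(n-1)$-triangulation the line $A_{n-a}\cap B_{n-b}\cap C_{n-c}$ and build a projective basis along a \emph{snake}; the triple ratios $X_{pqr}$ then appear directly as the scalars relating the two vectors one obtains at a vertex when approached from the two adjacent upward triangles (the relation $r_2=Xr_1$ in Example~\ref{n3M}). This makes both injectivity and surjectivity essentially tautological: the lines at all interior vertices, together with the snake basis, are reconstructed inductively from the $X_{pqr}$, and conversely any choice of $X_{pqr}\in\mathbb C^*$ yields a consistent system of lines and hence a flag $C$. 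Your approach would ultimately compute the same thing, but the snake picture replaces the ``triangular system in minors of $\eta$'' with an explicit combinatorial recursion on the $(n-1)$-triangulation, which is what makes the argument clean.
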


The above definitions have analogues for the case of \textit{quadruples} of flags:

\begin{defn}[General position for a quadruple]
 A quadruple of flags $(A,B,C,D)\in \mathcal{F}(\mathbb{C}^n)^4$ is said to be in general position if the sum \[ A_p+B_q+C_r+ D_s= A_p \oplus B_q \oplus C_r \oplus D_s=\mathbb{C}^n \] is direct whenever $p+q+r+s=n$. Equivalently, $A_p\cap B_q\cap C_r \cap D_s=0$ for all integers $p,q,r\ge 0$ such that $p+q+r=n$.
\end{defn}

\begin{defn}[Double ratio]
	Let $(A,B,C,C')$ be a quadruple of flags in general position. Then for $1\le r < n$, the \emph{$r$-th double ratio} of $(A,B,C,C')$ is defined as \[ 
	D_r(A,B,C,C')\coloneqq -\frac{a_r \wedge b_{n-r-1} \wedge c_1}{a_r \wedge b_{n-r-1} \wedge c'_1}\  \frac{a_{r-1} \wedge b_{n-r} \wedge c'_1}{a_{r-1} \wedge b_{n-r} \wedge c_1}, \] where $a_{p'} \in \Lambda^{p'}(A_{p'}),\ b_{q'} \in \Lambda^{q'}(B_{q'}),\ c_1\in \Lambda^1(C_1),\ c'_1\in \Lambda^1(C'_1)$ are some arbitrarily chosen non-zero elements.
\end{defn}


\begin{example}
	Recall that flags in $\mathbb{C}^2$ yield points in $\mathbb{CP}^1$ on projectivizing. In this case, a quadruple in general position is equivalent to asserting that the corresponding points in $\mathbb{CP}^1$ are pairwise distinct. The above double ratio reduces to the classical cross ratio of four distinct points $z_1,z_2,z_3,z_4\in \mathbb{C}$, defined by \[ (z_1,z_2;z_3,z_4)=\frac{(z_1-z_3)(z_2-z_4)}{(z_1-z_4)(z_2-z_3)}. \] 
\end{example}

The next result shows that together with the triple ratio, the double ratio provides an invariant for quadruples of flags in general position:

\begin{thm}\cite[\S9]{FG06} \label{FGquad}
	A quadruple of flags $(A,B,C,C')$ in general position is uniquely determined (upto the action of $\pslnc$) from $D_r(A,B,C,C')$ for all $1\le r < n$ and all the triple ratios of $(A,B,C)$ and $(A,B,C')$.
\end{thm}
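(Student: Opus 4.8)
The plan is to reconstruct the quadruple in two stages, using \Cref{FGmain} twice and then one explicit computation of how the double ratios transform under the stabilizer of a transverse pair of flags.

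First I would normalize the triple $(A,B,C)$. By \Cref{FGmain}, the triangle invariants $T_{pqr}(A,B,C)$ determine $(A,B,C)$ up to $\pslnc$, so given two quadruples with the same listed invariants we may apply an element of $\pslnc$ to the first so that its first three flags agree on the nose with those of the second. Fix an ordered basis $\{e_1,\dots,e_n\}$ adapted to the transverse pair $(A,B)$, i.e.\ $A_p=\langle e_1,\dots,e_p\rangle$ and $B_q=\langle e_{n-q+1},\dots,e_n\rangle$, and write $c_1=\sum_i\nu_i e_i$ for a generator of $C_1$; general position of the triple forces $\nu_i\neq 0$ for every $i$. Once $A,B,C$ are fixed, it remains to show that $C'$ is uniquely determined by the remaining data.

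The second stage is to locate $C'$. Applying \Cref{FGmain} again to the triple $(A,B,C')$, its triangle invariants determine $C'$ only up to the subgroup $H:=\mathrm{Stab}_{\pslnc}(A,B)$ --- any two flags whose triple with $(A,B)$ realizes the prescribed triangle invariants differ by an element of $\pslnc$ fixing both $A$ and $B$. Such an element preserves each of the lines $A_i\cap B_{n+1-i}$ and hence is diagonal in the basis $\{e_i\}$, so $H$ is exactly the maximal torus, the image of $\{\mathrm{diag}(t_1,\dots,t_n)\}$ in $\pslnc$. Thus $C'$ ranges over a definite $H$-orbit, and it remains to pin down the correct point using the double ratios. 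Here is the key computation: with the adapted generators $a_r=e_1\wedge\cdots\wedge e_r$ and $b_{n-r-1}=e_{r+2}\wedge\cdots\wedge e_n$, the operation $a_r\wedge b_{n-r-1}\wedge(\,\cdot\,)$ extracts, up to sign, the $e_{r+1}$-component of a vector and $a_{r-1}\wedge b_{n-r}\wedge(\,\cdot\,)$ extracts its $e_r$-component; writing $c'_1=\sum_i\mu_i e_i$ this gives $D_r(A,B,C,C')=-\nu_{r+1}\mu_r/(\nu_r\mu_{r+1})$. Replacing $C'$ by $h\cdot C'$ with $h=\mathrm{diag}(t_1,\dots,t_n)$ sends $\mu_i\mapsto t_i\mu_i$ and therefore multiplies $D_r$ by $t_r/t_{r+1}$, i.e.\ by the $r$-th simple root evaluated at $h$. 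Since the simple roots $\alpha_1,\dots,\alpha_{n-1}$ form a basis of the character lattice of the maximal torus of the adjoint group $\pslnc$, the values $D_r(A,B,C,C')$, $1\le r<n$, determine $h$ uniquely within $H$, hence determine $C'$. Existence of a quadruple realizing prescribed compatible invariants then follows by running this construction in reverse, or may be quoted from \cite{FG06}.

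I expect the only substantive point to be the second stage: identifying $\mathrm{Stab}_{\pslnc}(A,B)$ with the maximal torus and carrying out the computation that the $r$-th double ratio transforms by the $r$-th simple root, so that the $n-1$ double ratios coordinatize precisely the residual torus ambiguity. Everything else is bookkeeping around \Cref{FGmain}.
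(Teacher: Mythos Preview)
The paper does not supply its own proof of this statement; it is quoted from \cite[\S9]{FG06} and used as a black box. So there is nothing in the paper to compare against directly, and your proposal should be judged on its own.

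Your argument is correct and is essentially the standard one. The two-stage strategy---normalize $(A,B,C)$ via \Cref{FGmain}, then observe that the residual ambiguity in placing $C'$ is exactly the diagonal torus $H=\mathrm{Stab}_{\pslnc}(A,B)$, and finally check that the $n-1$ double ratios transform under $H$ by the $n-1$ simple roots---is precisely how Fock--Goncharov organize the computation. Your explicit formula $D_r=-\nu_{r+1}\mu_r/(\nu_r\mu_{r+1})$ in the adapted basis is correct, and the conclusion that the map $h\mapsto(\alpha_1(h),\dots,\alpha_{n-1}(h))$ is injective on the adjoint torus is exactly the right way to finish. One small point worth making explicit: the hypothesis that the \emph{quadruple} is in general position specializes (by setting one index to zero) to both triples $(A,B,C)$ and $(A,B,C')$ being in general position, which is what guarantees all $\nu_i,\mu_i\neq 0$ and hence that the double ratios are defined and nonzero; you use this implicitly but it is worth a sentence. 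The closing paragraph reads more like commentary than proof; you can drop it once the computation is written out.
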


\subsection{Framed representations and Fock-Goncharov coordinates}
Throughout this section, $S$ will denote a punctured surface of genus $g\geq 0$ and $k\geq 1$ punctures, having negative Euler characteristic. We shall discuss the case of a closed surface in Section 5.

\begin{defn}\label{FR}
	A \emph{framed representation} $\hat{\rho}$ of $\pi_1(S)$ into $\pslnc$ is a pair $(\rho, \beta)$ consisting of the representation $\rho: \pi_1(S) \rightarrow \pslnc $ together with a \textit{framing},  a $\rho$-equivariant map $\beta: F_{\infty} \rightarrow \mathcal{F}(\mathbb{C}^n)$, where the Farey set $F_\infty$ is the set of points on the ideal boundary of the universal cover, corresponding to the lifts of the punctures.
\end{defn}

\begin{defn} Given a punctured surface $S$ as above, the moduli space of framed $\pslnc$-representations is 
     \[\widehat{\chi}_n(S) := \{\text{framed representations } (\rho,\beta) \text{ of } \pi_1(S) \text{ into } \pslnc \}/\pslnc\] 
where the quotient is given by $(\rho, \beta) \sim (A\rho A^{-1}, A \cdot \beta)$ for $A \in \pslnc$.
\end{defn}

Fock-Goncharov provided coordinates on this moduli space, and proved:  
\begin{thm}\cite{FG06}\label{FGparam}
	There is a birational isomorphism \[ \Psi_\tau:\widehat{\chi}_n(S) \mapsto (\mathbb{C}^*)^{(2g+k-2)(n^2-1)} \]
 for any ideal triangulation $\tau$ of $S$.
\end{thm}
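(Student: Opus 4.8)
The plan is to construct an explicit rational inverse to $\Psi_\tau$ using the flag-reconstruction results \Cref{FGmain} and \Cref{FGquad}. First I would make the coordinate map precise. Lift $\tau$ to an ideal triangulation $\widetilde{\tau}$ of the universal cover, whose vertex set is the Farey set $F_\infty$; given a framed representation $(\rho,\beta)$, assign to each triangle of $\tau$ the triple ratios $T_{pqr}$ (over all $p+q+r=n$ with $p,q,r\ge 1$) of the triple of flags that $\beta$ puts on the vertices of a chosen lift, and to each edge of $\tau$ the double ratios $D_r$ ($1\le r<n$) of the quadruple of flags coming from the two triangles adjacent to a chosen lift of that edge. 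Since the triple and double ratios are invariant under $\pslnc$ and equivariant under deck transformations, these numbers do not depend on the chosen lifts nor on the chosen representative of $(\rho,\beta)$ modulo conjugation, so they define a map $\Psi_\tau$ into $(\mathbb{C}^*)^N$, rational in the entries of $\rho$ and $\beta$ and regular exactly where the relevant tuples of flags are in general position. An Euler-characteristic count (treating the $k$ punctures as vertices of a triangulation of the closed surface $\overline{S}$, with $3T=2E$) gives $T=2(2g+k-2)$ triangles and $E=3(2g+k-2)$ edges of $\tau$; with $\binom{n-1}{2}$ triple ratios per triangle and $n-1$ double ratios per edge, and $2\binom{n-1}{2}+3(n-1)=(n-1)(n+1)$, the target dimension is $N=(2g+k-2)(n^2-1)$ as claimed.

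For the inverse, fix a base triangle $t_0$ of $\widetilde{\tau}$. By \Cref{FGmain} the prescribed triple ratios determine a triple of flags in general position at the three vertices of $t_0$, unique up to the $\pslnc$-action; fix such a triple. Now propagate along the dual tree of $\widetilde{\tau}$: crossing an edge $e$ of $\widetilde{\tau}$ from a triangle whose three flags are already determined into a neighboring triangle $t$, the triangle $t$ has exactly one new vertex $w$, and \Cref{FGquad} determines the flag at $w$ uniquely from the double ratios assigned to $e$ and the triple ratios assigned to $t$. Because the universal cover is simply connected, its triangle-adjacency graph is a tree, so this induction has no monodromy and yields a well-defined $\beta\colon F_\infty\to\mathcal{F}(\mathbb{C}^n)$; concretely the construction can be organized through Fock--Goncharov's snakes, which supply explicit bases in each triangle and rational transition matrices across edges, making the inverse manifestly rational. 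To recover $\rho$: any deck transformation $\gamma$ permutes the triangles of $\widetilde{\tau}$ while preserving all assigned coordinates (these are $\pi_1$-invariant, living on $S$), so the two framings $v\mapsto\beta(\gamma v)$ and $v\mapsto\beta(v)$ are reconstructions from the same data and hence agree after applying a unique element of $\pslnc$, which we take to be $\rho(\gamma)$; uniqueness forces $\rho$ to be a homomorphism and $\beta$ is then $\rho$-equivariant by construction, while the ambiguity in the initial normalization of $t_0$ is exactly an overall conjugation. Thus $\Psi_\tau$ and this construction are mutually inverse rational maps, i.e.\ a birational isomorphism.

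The main obstacle is the genericity bookkeeping that legitimizes the word ``birational'': one must pin down a dense open locus in $(\mathbb{C}^*)^N$ on which the inductive reconstruction keeps every triple and quadruple of flags in general position, so that \Cref{FGmain,FGquad} apply at every step, and dually a dense open locus in $\widehat{\chi}_n(S)$ on which all triple and double ratios are defined and lie in $\mathbb{C}^*$. The crux is that a generic configuration of flags along $\widetilde{\tau}$ is in general position and, moreover, has trivial stabilizer in $\pslnc$ — the latter (which holds once one has $n+1$ flags in sufficiently general position, and $F_\infty$ supplies infinitely many) is what makes $\rho(\gamma)$ uniquely determined and hence makes $\rho$ a genuine homomorphism. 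A minor but necessary point for the dimension count is that in an ideal triangulation of a punctured (boundaryless) surface every edge borders two triangular faces, so there is exactly one packet of $n-1$ double ratios per edge and no boundary corrections intervene.
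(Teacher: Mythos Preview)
Your proposal is correct and follows essentially the same approach as the paper's exposition: define $\Psi_\tau$ via the triple and double ratios, do the Euler-characteristic dimension count, and for the inverse propagate flags from a base triangle using \Cref{FGmain} and \Cref{FGquad}, then recover $\rho(\gamma)$ as the unique $\pslnc$-element carrying the initial flag triple to its $\gamma$-translate. The paper's presentation differs only in that it makes the last step explicit by introducing the monodromy graph and writing $\rho(\gamma)$ as a concrete product $T(t_k)^{\delta_k}E(e_k)\cdots T(t_1)^{\delta_1}E(e_1)$ of the edge and triangle matrices (what you allude to via snakes), whereas you phrase it more abstractly via uniqueness and trivial stabilizers; both amount to the same argument.
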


We shall now explain this result, and how their coordinates and the map $\Psi_\tau$ above are defined using the invariants of tuples of flags introduced in the previous subsection. The particular case for $\psltc$  has been discussed in \cite{FG07} and \cite{CTT}.  Note that these coordinates will be defined for an open dense subset of $\widehat{\chi}_n(S)$ as we shall assume that various triples and quadruples of flags are all ``generic", i.e.\ in general position  -- see \S2.3 for a discussion. 

\vspace{.05in}



In what follows, an $n$-triangulation is a sub-triangulation of a triangle $T$ as shown on \Cref{ftrian}, where each vertex corresponds to a triple of non-negative integers $(i,j,k)$ such that $i+j+k=n$. We can clearly notice that the downward facing small triangles (shaded in \Cref{ftrian}) in an $n$-triangulation are in a one-one correspondence with the vertices of a $(n-2)$-triangulation. 

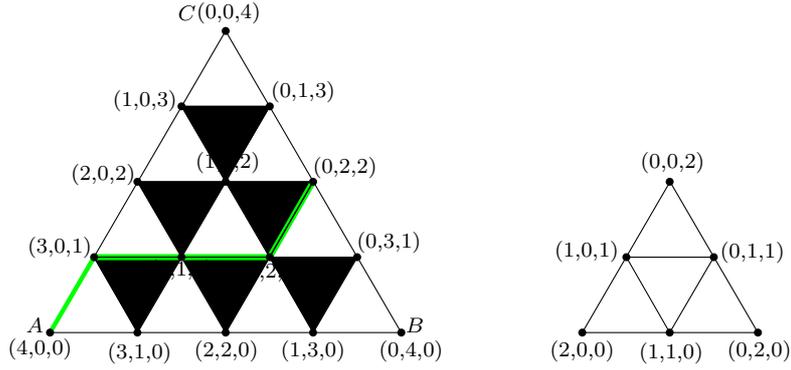
\begin{figure}[ht]
\begin{tikzpicture}
\clip(-0.57,-0.56) rectangle (9.85,4.5);
\fill[fill=black,fill opacity=0.25] (2.89,3) -- (1.73,3) -- (2.31,2) -- cycle;
\fill[fill=black,fill opacity=0.25] (2.31,2) -- (1.15,2) -- (1.73,1) -- cycle;
\fill[fill=black,fill opacity=0.25] (3.46,2) -- (2.31,2) -- (2.89,1) -- cycle;
\fill[fill=black,fill opacity=0.25] (1.73,1) -- (0.58,1) -- (1.15,0) -- cycle;
\fill[fill=black,fill opacity=0.25] (1.73,1) -- (2.89,1) -- (2.31,0) -- cycle;
\fill[fill=black,fill opacity=0.25] (2.89,1) -- (3.46,0) -- (4.04,1) -- cycle;
\draw[smooth,samples=100,domain=0.0:2.3094010767585034] plot(\x,{sqrt(3)*(\x)});
\draw[smooth,samples=100,domain=2.3094:4.6188] plot(\x,{0-sqrt(3)*((\x)-8/sqrt(3))});
\draw (1.73,3)-- (2.89,3);
\draw (3.46,2)-- (1.15,2);
\draw (0.58,1)-- (4.04,1);
\draw[smooth,samples=100,domain=1.73205:3.4641016151377544] plot(\x,{0-sqrt(3)*(\x)+6});
\draw[smooth,samples=100,domain=1.15:2.3094010767585034] plot(\x,{0-sqrt(3)*(\x)+4});
\draw[smooth,samples=100,domain=0.58:1.1547005383792517] plot(\x,{0-sqrt(3)*(\x)+2});
\draw (0,0)-- (4.62,0);
\draw[green, line width=0.6mm] (0,0)-- (0.58,1);
\draw[green, line width=0.8mm] (0.58,1)-- (2.89,1);
\draw[green, line width=0.8mm] (2.89,1)-- (3.46,2);
\draw (2.89,3)-- (1.15,0);
\draw (3.46,2)-- (2.31,0);
\draw (4.04,1)-- (3.46,0);
\draw (2.89,3)-- (1.73,3);
\draw (1.73,3)-- (2.31,2);
\draw (2.31,2)-- (2.89,3);
\draw (2.31,2)-- (1.15,2);
\draw (1.15,2)-- (1.73,1);
\draw (1.73,1)-- (2.31,2);
\draw (3.46,2)-- (2.31,2);
\draw (2.31,2)-- (2.89,1);
\draw (2.89,1)-- (3.46,2);
\draw (1.73,1)-- (0.58,1);
\draw (0.58,1)-- (1.15,0);
\draw (1.15,0)-- (1.73,1);
\draw (1.73,1)-- (2.89,1);
\draw (2.89,1)-- (2.31,0);
\draw (2.31,0)-- (1.73,1);
\draw[smooth,samples=100,domain=7.0:8.1547] plot(\x,{sqrt(3)*((\x)-7)});
\draw[smooth,samples=100,domain=8.1547:9.3094] plot(\x,{0-sqrt(3)*((\x)-1)+4+6*sqrt(3)});
\draw (2.89,1)-- (3.46,0);
\draw (3.46,0)-- (4.04,1);
\draw (4.04,1)-- (2.89,1);
\draw (7,0)-- (9.31,0);
\draw (7.58,1)-- (8.73,1);
\draw (7.58,1)-- (8.15,0);
\draw (8.73,1)-- (8.15,0);
\begin{scriptsize}
\fill [color=black] (0,0) circle (1.5pt);
\draw[color=black] (-0.13,-0.22) node {(4,0,0)};
\draw[color=black] (-0.2,0.1) node {$A$};
\fill [color=black] (2.31,4) circle (1.5pt);
\draw[color=black] (2.35,4.24) node {(0,0,4)};
\draw[color=black] (1.8,4.24) node {$C$};
\fill [color=black] (4.62,0) circle (1.5pt);
\draw[color=black] (4.75,-0.24) node {(0,4,0)};
\draw[color=black] (4.8,0.1) node {$B$};
\fill [color=black] (1.73,3) circle (1.5pt);
\draw[color=black] (1.25,3.07) node {(1,0,3)};
\fill [color=black] (2.89,3) circle (1.5pt);
\draw[color=black] (3.33,3.19) node {(0,1,3)};
\fill [color=black] (1.15,2) circle (1.5pt);
\draw[color=black] (0.7,2.09) node {(2,0,2)};
\fill [color=black] (3.46,2) circle (1.5pt);
\draw[color=black] (3.88,2.19) node {(0,2,2)};
\fill [color=black] (0.58,1) circle (1.5pt);
\draw[color=black] (0.13,1.13) node {(3,0,1)};
\fill [color=black] (4.04,1) circle (1.5pt);
\draw[color=black] (4.46,1.19) node {(0,3,1)};
\fill [color=black] (1.15,0) circle (1.5pt);
\draw[color=black] (1.18,-0.27) node {(3,1,0)};
\fill [color=black] (2.31,0) circle (1.5pt);
\draw[color=black] (2.32,-0.25) node {(2,2,0)};
\fill [color=black] (3.46,0) circle (1.5pt);
\draw[color=black] (3.46,-0.25) node {(1,3,0)};
\fill [color=black] (1.73,1) circle (1.5pt);
\draw[color=black] (1.73,0.82) node {(2,1,1)};
\fill [color=black] (2.31,2) circle (1.5pt);
\draw[color=black] (2.34,2.26) node {(1,1,2)};
\fill [color=black] (2.89,1) circle (1.5pt);
\draw[color=black] (2.91,0.8) node {(1,2,1)};
\fill [color=black] (7.58,1) circle (1.5pt);
\draw[color=black] (7.06,1.08) node {(1,0,1)};
\fill [color=black] (8.73,1) circle (1.5pt);
\draw[color=black] (9.24,1.06) node {(0,1,1)};
\fill [color=black] (7,0) circle (1.5pt);
\draw[color=black] (7,-0.25) node {(2,0,0)};
\fill [color=black] (9.31,0) circle (1.5pt);
\draw[color=black] (9.33,-0.25) node {(0,2,0)};
\fill [color=black] (8.15,0) circle (1.5pt);
\draw[color=black] (8.17,-0.27) node {(1,1,0)};
\fill [color=black] (8.15,2) circle (1.5pt);
\draw[color=black] (8.19,2.26) node {(0,0,2)};
\end{scriptsize}
\end{tikzpicture}
	\caption{Snake (green) in a 4-triangulation (left), one-one correspondence of the downward facing small (shaded) triangles with 2-triangulation (right).}
	\label{ftrian}
\end{figure}

To define $\Psi_\tau((\rho,\beta)),\ \frac{(n-2)(n-1)}{2}$ triangle invariants and $(n-1)$ edge invariants are associated with each of the ideal triangles and edges, respectively, of a fixed ideal triangulation $\tau$. In the following two paragraphs, we shall discuss how these invariants are associated.

To assign the triangle invariants, consider the $(n-1)$-triangulation in each ideal triangle of $\tau$.  An invariant is associated for each of the downward-facing small triangles inside the $(n-1)$-triangulation; these are the \textit{triangle invariants}, defined as follows. Let $A,B,C$ be a generic triple of flags in $\mathcal{F}(\mathbb{C}^n)$ associated counterclockwise to the ideal vertices of a triangle $T\in \tau$ via the framing $\beta$. Then the triangle invariant $X_{i,j,k}$ (where $i+j+k=n-1-2=n-3$) for $T$ is defined to be the $(i+1,j+1,k+1)$-triple ratio of the three flags $(A,B,C)$. 

To assign the edge invariants, let $(A,B,C,C')$ be a quadruple of flags associated (via the framing) to the vertices of the ideal quadrilateral formed by two adjacent ideal triangles of $\tau$, in counterclockwise order,  where $A$ and $C$ are the flags associated with the (endpoints of the) common edge $E$.  The $(n-1)$ edge-invariants for $E$ are defined as the $r$-th double ratio of $(A,B,C,C^\prime)$ for $1\leq r\leq n-1$.  Notice that $E$ consists of $(n-1)$ edges of the $(n-1)$-triangulation; the edge-invariants can be thought of as corresponding to these small edges.  

\begin{rmk}
	A quick dimension count yields the following: on any ideal triangulation on $S$, there are $2(2g+k-2)$ triangles and $3(2g+k-2)$ edges. Each triangle has $\frac{(n-2)(n-1)}{2}$ triangle invariants and each edge has $(n-1)$ edge invariants associated. Thus, the total number of (non-zero complex) parameters is $(2g+k-2)(n-1)(n+1)$ which explains the dimension of the target space in Theorem \ref{FGparam}.
\end{rmk}

\begin{example}
	For $n=2$, these define Thurston's shear-bend coordinates for the $\pslc$-representa-tion variety. Recall that flags in $\mathbb{C}^2$ can be identified with the points in $\mathbb{CP}^1$; thus, a framing is a  $\rho$-equivariant map $\beta: F_{\infty} \rightarrow \mathbb{CP}^1$. Since, triple of points on $\mathbb{CP}^1$ are unique (upto the action of $\pslc$), there are no triangle invariants in this case.  There is one edge invariant  associated with each edge of the triangulation which is the complex cross-ratio of the four ideal points that are the vertices of the two adjacent ideal triangles. 
\end{example}

We shall now give an idea of how to define $\Psi_\tau^{-1}$ in \Cref{FGparam}, i.e. how to define a framed representation $(\rho,\beta)$ from given coordinates. 

We shall first determine the framing $\beta$ as follows. Consider the lift of the ideal triangulation in the universal cover of $S$ and choose an ideal triangle $\Delta_0$. From the (given) triangle invariants of that triangle, we obtain a triple of flags associated with the three ideal vertices of $\Delta_0$ from \Cref{FGmain}, which are well defined up to post-composition by an element of $\pslnc$. Now consider an adjacent triangle $\Delta_1$ of $\Delta_0$ with a common edge $e_0$. Using \Cref{FGquad}, the triangle invariants of $\Delta_0$ and $\Delta_1$ and the edge invariants of $e_0$ determine the flag at the other vertex of $\Delta_1$. Continuing this process, we can assign flags uniquely to each vertex of the ideal triangulation, and thus obtain the map $\beta$.

\vspace{.05in} 

To obtain the representation $\rho$, we introduce what we shall call the \textit{monodromy graph} on $S$. For the fixed ideal triangulation $\tau$, the monodromy graph $\Gamma_\tau$ is the embedded graph on $S$ comprising an edge transverse to each edge of $\tau $ and a triple of edges inside each triangle that connect the endpoints of the above edges to form a smaller triangle (see \Cref{monod}). In other words, we start with the dual graph of $\tau$ and then blow up each vertex to a small triangle completely inside each ideal triangle to get $\Gamma_{\tau}$.

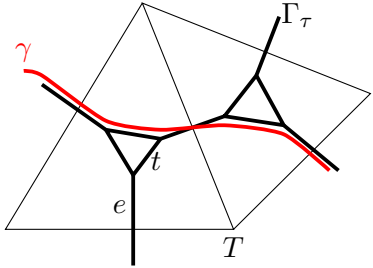
\begin{figure}[ht]
\centering
\begin{tikzpicture}[scale=0.6]
\draw (3,5) -- (0,0) -- (5,0) -- (3,5) -- (8,3) -- (5,0);
\draw [line width=0.5mm,   black] (3.4,2) -- (2.2,2.2) -- (2.8,1.2) -- (3.4,2) -- (4.8,2.5) --(6.1,2.3) -- (5.5,3.4) -- (4.8,2.5);
\draw [line width=0.5mm,   black] (0.8,3.2) -- (2.2,2.2);
\draw [line width=0.5mm,   black] (2.8,-0.8) -- (2.8,1.2);
\draw [line width=0.5mm,   black] (6.1,2.3) -- (7.3,1.3);
\draw [line width=0.5mm,   black] (5.5,3.4) -- (6,4.7);
\draw [line width=0.5mm,  red] plot [smooth, tension=0.4] coordinates { (0.4,3.5) (0.8,3.4) (2.2,2.4) (3.4,2.2) (4.8,2.3) (6.1,2.1) (7.1, 1.3)};
\node [red] at (0.4,3.9) {$\gamma$};
\node at (5,-0.4) {$T$};
\node at (6.4,4.74) {$\Gamma_\tau$};
\node at (3.3,1.5) {$t$};
\node at (2.5,0.5) {$e$};
\end{tikzpicture}
\setlength{\belowcaptionskip}{-8pt}
\caption{Part of the monodromy graph (shown in bold).}
\label{monod}
\end{figure}
\noindent The edges transverse to the edges of $\tau$ are called $e$-edges and the edges of the small triangles are called $t$-edges. With every oriented edge of $\Gamma_\tau$, an element of $\pslnc$ shall be assigned, which we denote by $E(e)$ and $T(t)$, where $e$ and $t$ represent the tuples of edge invariants and triangle invariants respectively. Briefly, the matrix $T(t)$ is the transformation that maps the flags associated with the vertices of an edge of the ideal triangle to those of the next edge (here the edges of an ideal triangle are considered to be ordered counterclockwise). The matrix $E(e)$ swaps the flags associated with the edge of $\tau$  that is transverse to $e$, and maps the line (i.e., the one-dimensional subspace of the flag) associated with the opposite vertex of an adjacent triangle (to $e$) onto the line associated with the opposite vertex of the other adjacent triangle. A given curve $\gamma$ on $S$ can be homotoped in a unique way to a concatenation of (directed) edges of $\Gamma_\tau$ such that the $e$-edges and the $t$-edges appear consecutively. To compute the monodromy of $\gamma$,  namely $\rho(\gamma)$, let us consider \Cref{monComp}.
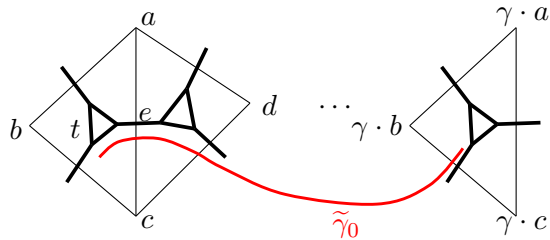
\begin{figure}[tph]
	\centering
	\begin{tikzpicture}
	\draw (-0.4,1.2)-- (1,2.5);
	\draw (-0.4,1.2)-- (1,0);
	\draw (1,2.5)-- (2.5,1.5);
	\draw (2.5,1.5)-- (1,0);
	\draw (4.6,1.2)-- (6,2.5);
	\draw (6,2.5)-- (6,0);
	\draw (4.6,1.2)-- (6,0);
	\draw (1,2.5)-- (1,0);
	\draw (0.93,2.83) node[anchor=north west] {$a$};
	\draw (-.8,1.43) node[anchor=north west] {$b$};
	\draw (0.93,0.14) node[anchor=north west] {$c$};
	\draw (5.6,2.91) node[anchor=north west] {$\gamma \cdot a$};
	\draw (3.7,1.48) node[anchor=north west] {$\gamma \cdot b$};
	\draw (5.6,0.14) node[anchor=north west] {$\gamma \cdot c$};
	\draw (2.52,1.75) node[anchor=north west] {$d$};
	\draw (3.26,1.66) node[anchor=north west] {$\cdots$};
	\draw [line width=1.4pt] (0.02,1.98)-- (0.39,1.49);
	\draw [line width=1.4pt] (0.39,1.49)-- (0.42,0.95);
	\draw [line width=1.4pt] (0.42,0.95)-- (0.75,1.22);
	\draw [line width=1.4pt] (0.75,1.22)-- (0.39,1.49);
	\draw [line width=1.4pt] (0.42,0.95)-- (0.09,0.45);
	\draw [line width=1.4pt] (0.75,1.22)-- (1.32,1.24);
	\draw [line width=1.4pt] (1.32,1.24)-- (1.67,1.69);
	\draw [line width=1.4pt] (1.67,1.69)-- (1.77,1.16);
	\draw [line width=1.4pt] (1.77,1.16)-- (1.32,1.24);
	\draw [line width=1.4pt] (1.77,1.16)-- (2.18,0.78);
	\draw [line width=1.4pt] (5.02,1.98)-- (5.39,1.49);
	\draw [line width=1.4pt] (5.75,1.22)-- (5.39,1.49);
	\draw [line width=1.4pt] (5.39,1.49)-- (5.42,0.95);
	\draw [line width=1.4pt] (5.42,0.95)-- (5.75,1.22);
	\draw [line width=1.4pt] (5.42,0.95)-- (5.09,0.45);
	\draw [line width=1.4pt] (5.75,1.22)-- (6.32,1.24);
	\draw [line width=1.4pt] (1.67,1.69)-- (1.93,2.23);
	\draw (0,1.41) node[anchor=north west] {$t$};
	\draw (.9,1.56) node[anchor=north west] {$e$};
	\draw [line width=1.1pt,  red] plot [smooth, tension=.6] coordinates { (0.52, 0.78) (0.78, 0.99) (1.29, 1.03) (1.77, 0.85) (2.12, 0.65) (2.96, 0.29) (4.17, 0.16) (4.86, 0.41) (5.3, 0.9)};
	\draw [color=red](3.45,0.2) node[anchor=north west] {$\widetilde\gamma_0$};
	\end{tikzpicture}
	\caption[Computing monodromy of a curve]{Computing monodromy of a curve $\gamma$.}
	\label{monComp}
\end{figure}
Let $\widetilde{\gamma_0}$ be an arc on $\widetilde{\gamma}$ that descends onto $\gamma$. Let $\widetilde{\gamma_0}$ start from a triangle with ideal vertices $a,b,c$. Then the endpoint of $\widetilde{\gamma_0}$ lies within the triangle with ideal vertices $\gamma \cdot a, \gamma \cdot b, \gamma \cdot a$. Let $A,B,C$ be the flags associated with the vertices $a,b,c$. We want $\rho(\gamma)$ such that $(A,B,C)=(\beta(a),\beta(b),\beta(c) ) \xmapsto{\rho(\gamma)} (\beta(\gamma \cdot a), \beta(\gamma \cdot b),\beta(\gamma \cdot c) )$, as $\beta$ must be $\rho$-equivariant. Let $\widetilde{\gamma_0} \sim e_1 * t_1*\cdots * e_k * t_k$. Then we can see that the matrix $T(t_k)^{\delta_k}E(e_k)\cdots T(t_1)^{\delta_1}E(e_1)$ maps the flags $A,B,C$ onto the flags $\beta(\gamma \cdot a), \beta(\gamma \cdot b),\beta(\gamma \cdot c)$ respectively, where the signs $\delta_i\in \{\pm 1 \}$ are determined by the order in which the curve crosses the triangles. Hence we obtain 
\begin{equation}\label{rhogamma}
\rho(\gamma)=T(t_k)^{\delta_k}E(e_k)\cdots T(t_1)^{\delta_1}E(e_1).
\end{equation}
where note that the constituent matrices depend on the corresponding triangle and edge invariants respectively. 

In \Cref{mmat} we shall refine the above matrix product into a product of simpler matrices.

\subsection{The genericity condition}
The condition that the representation $\rho$ is ``generic" in the statement of the main result \Cref{mainthm}  comes from the fact that we require that there exists a framing $\beta$ such that the framed representation $\hat{\rho} = (\rho,\beta)$ has well-defined Fock-Goncharov coordinates with respect to some ideal triangulation $\tau$.

The following lemma justifies calling this a ``generic" condition:

\begin{lem}\label{gencon}

    The set of representations $\rho$ in the $\pslnc$-representation variety (for a punctured surface-group) that have a framing such that the resulting framed representation has well-defined Fock-Goncharov coordinates, is open and dense.
	
\end{lem}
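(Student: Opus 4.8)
The plan is to recast the condition algebraically --- to realise the set in the statement as the image, under a forgetful map, of a Zariski-open subset of an incidence variety of (representation, framing) pairs --- and then read off openness and density from standard facts about morphisms of complex algebraic varieties. Fix one ideal triangulation $\tau$ (the set in the lemma only grows if one also quantifies over $\tau$, so it suffices to treat a single $\tau$). Since $S$ has $k\ge 1$ punctures, $\pi_1(S)$ is free of rank $r=2g+k-1\ge 2$, so the representation variety $\mathcal R=\mathrm{Hom}(\pi_1(S),\pslnc)\cong\pslnc^{\,r}$ is smooth and irreducible. A $\rho$-equivariant framing (\Cref{FR}) is the same datum as a choice, for each puncture $p_i$, of a flag $F_i\in\mathcal F(\mathbb C^n)$ fixed by the peripheral monodromy $\rho(c_i)$, where $c_i$ is the loop around $p_i$; equivariance then determines the framing on the whole $\pi_1(S)$-orbit of the corresponding Farey point. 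So I would work with the incidence variety
\[
\mathcal Z=\{(\rho,F_1,\dots,F_k)\in\pslnc^{\,r}\times\mathcal F(\mathbb C^n)^{k}\ :\ \rho(c_i)F_i=F_i\ \text{ for }\ 1\le i\le k\},
\]
together with its forgetful projection $\pi\colon\mathcal Z\to\mathcal R$; the set in the lemma is then $\mathcal G:=\pi(\mathcal Z^{\mathrm{gen}})$, where $\mathcal Z^{\mathrm{gen}}\subseteq\mathcal Z$ is the locus on which the Fock--Goncharov coordinates for $\tau$ are well defined.

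The second step is to see that $\mathcal Z^{\mathrm{gen}}$ is Zariski-open in $\mathcal Z$. As recalled earlier in this section, ``the coordinates are defined'' means precisely that the triple of flags at the vertices of each ideal triangle of $\tau$ is in general position (so all triangle invariants lie in $\mathbb C^{*}$) and the quadruple of flags attached to the two triangles sharing each edge of $\tau$ is in general position (so all edge invariants lie in $\mathbb C^{*}$). By $\rho$-equivariance of the framing these conditions, which a priori concern infinitely many triangles and edges in the universal cover, reduce to the finitely many triangles and edges of $\tau$ itself, and each of them is the non-vanishing of a polynomial in the matrix entries of $\rho$ on a chosen generating set and in the Pl\"ucker coordinates of the $F_i$. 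Hence $\mathcal Z^{\mathrm{gen}}$ is Zariski-open and $\mathcal G=\pi(\mathcal Z^{\mathrm{gen}})$ is constructible.

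For density and openness I would restrict to the locus of regular peripheral monodromy. The incidence variety $V=\{(g,F)\in\pslnc\times\mathcal F(\mathbb C^n):gF=F\}$ is smooth --- it fibres over $\mathcal F(\mathbb C^n)$ with fibre the Borel subgroup stabilising the flag --- and has dimension $\dim\pslnc$; over the Zariski-open set of regular elements $g$ its fibres are finite, so by miracle flatness $V\to\pslnc$ is finite flat, hence open, over the regular locus. Since $\mathcal Z$ is the fibre product of the $k$-fold product $V^{k}\to(\pslnc)^{k}$ with the morphism $\mathcal R\to(\pslnc)^{k}$, $\rho\mapsto(\rho(c_1),\dots,\rho(c_k))$, the restriction of $\pi$ to $\pi^{-1}(\mathcal R^{\mathrm{reg}})$ is again finite flat, hence open, where $\mathcal R^{\mathrm{reg}}\subseteq\mathcal R$ is the nonempty Zariski-open set on which every $\rho(c_i)$ is regular. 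Moreover $\mathcal Z^{\mathrm{gen}}\cap\pi^{-1}(\mathcal R^{\mathrm{reg}})\neq\emptyset$: a positive representation $\rho_0\colon\pi_1(S)\to\pslnr$ with its canonical framing has all Fock--Goncharov coordinates real and positive (so, in particular, defined) and loxodromic, hence regular, peripheral monodromy --- alternatively, by \Cref{FGparam} the domain of $\Psi_\tau$ is a nonempty Zariski-open subset of the framed moduli space, whose generic point has regular peripheral monodromy. Being the image of a nonempty open set under a map that is open over $\mathcal R^{\mathrm{reg}}$, the set $\pi(\mathcal Z^{\mathrm{gen}}\cap\pi^{-1}(\mathcal R^{\mathrm{reg}}))$ is a nonempty open subset of $\mathcal R^{\mathrm{reg}}$, hence of $\mathcal R$; as $\mathcal R\cong\pslnc^{\,r}$ is irreducible, it is dense. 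Thus $\mathcal G$ contains this open dense subset of $\mathcal R$, which already gives density, and gives openness of $\mathcal G$ at every point of $\mathcal G\cap\mathcal R^{\mathrm{reg}}$; and since the whole condition is invariant under conjugation, the conclusion descends to the character variety.

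The step I expect to be the real obstacle is openness of $\mathcal G$ at the exceptional representations $\rho\in\mathcal G$ whose peripheral monodromy $\rho(c_i)$ is non-regular for some $i$: there $\pi$ itself is not open, since the set of admissible framings --- the fibre of $\pi$ --- jumps in dimension, so openness of $\mathcal G=\pi(\mathcal Z^{\mathrm{gen}})$ at $\rho$ does not follow formally from $\mathcal Z^{\mathrm{gen}}$ being open. I would attack this by showing directly that a valid framing propagates: $\pi$ is proper and surjective (the flag variety is compact and every element of $\pslnc$ fixes a flag), so every $\rho'$ near $\rho$ admits framings, and the task is, for $\rho'$ close to $\rho$, to exhibit a framing of $\rho'$ that stays in the open set $\mathcal Z^{\mathrm{gen}}$. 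A reasonable route is to stratify $\mathcal R$ by the Jordan type of the peripheral monodromies --- on each stratum the fixed flags form a genuine fibre bundle, and the argument of the previous paragraph goes through --- and then to control the (closed, lower-dimensional) jumps between strata by a semicontinuity argument. This is the point I would expect to need the most care; in any event, for the applications in this paper only the conclusion of the previous paragraph --- that $\mathcal G$ contains a dense Zariski-open subset of the representation variety, which is what justifies the word ``generic'' in \Cref{mainthm} --- is actually used.
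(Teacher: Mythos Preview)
Your approach is considerably more careful than the paper's, and in fact you have put your finger on a point the paper glosses over. The paper's argument is brief: it asserts, without justification, that the forgetful map from the moduli of framed representations to the representation variety is continuous and \emph{open}, and then observes (as you do) that the locus where the Fock--Goncharov coordinates are defined is cut out, after using $\rho$-equivariance to reduce to finitely many triangles and edges, by the non-vanishing of finitely many determinantal conditions, hence is Zariski-open and dense upstairs. The image of an open dense set under an open surjection is open and dense, and that is the whole proof.

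Your incidence-variety setup and your use of miracle flatness over the regular-peripheral locus give an honest proof of exactly the assertion the paper takes for granted, at least over $\mathcal R^{\mathrm{reg}}$. Your worry about openness at representations with non-regular peripheral monodromy is legitimate: the fibre dimension of $\pi$ does jump there, so $\pi$ is not flat, and the paper's blanket claim that the forgetful map is open is not obviously justified at those points either. In other words, the gap you identify is present in the paper's own proof as well; you have simply been more scrupulous about naming it.

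Your concluding remark is also correct and worth emphasising: for the use made of this lemma in \Cref{mainthm}, all that is needed is that the set $\mathcal G$ contains a nonempty Zariski-open (hence dense) subset of the irreducible variety $\mathcal R\cong\pslnc^{\,r}$, and both your argument and the paper's establish this without difficulty. The full openness of $\mathcal G$ at the boundary strata is a finer statement than the applications require.
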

\begin{proof}
        It suffices to show that the subset of framed representations in the moduli space of \textit{framed} representations with well-defined Fock-Goncharov coordinates is open and dense, since the projection map to the $\pslnc$-representation variety (that forgets the framing) is continuous and an open map.
        
        Let $\tilde{\tau}$ be the lift of the ideal triangulation to the universal cover. 
	Now the following two conditions are sufficient for $(\rho,\beta)$ to have the Fock-Goncharov coordinates: 
	\begin{itemize}
		\item The triple of flags assigned by $\beta$ to the ideal vertices of the lift of any triangle in $\tilde{\tau}$ are in general position.
  
		\item The quadruples of flags associated with the ideal vertices of the ideal quadrilateral formed by two adjacent triangles of $\tilde{\tau}$ are in general position.
	\end{itemize}

    By the $\rho$-equivariance of $\beta$, the above reduces to a \textit{finite} check, involving finitely many ideal triangles that form the fundamental domain of the $\pi_1(S)$-action, together with the adjacent fundamental domains. 
    
    Note that flags \textit{not} being in a general position involves certains coincidences of the lower-dimensional subspaces of those flags; thus, the space of triples (or quadruples) of such flags has positive codimension in the space of \textit{all} triples (or quadruples). It follows that the set of framed representations which has the Fock-Goncharov coordinates with respect to an ideal triangulation $\tau$ is  already open and dense in the moduli space of framed representations.
\end{proof}

\begin{rmk}
   It is also true that the open dense subset of the $\pslnc$-representation variety is \textit{not} the whole space -- for example, a representation whose image is an order-$2$ group in $\pslnc$ will be in the complement that subset.  For $n=2$, there is a characterization of this open and dense set in terms of the notion of \textit{non-degenerate} framed representations (see \cite[Definition 4.3]{AB}, and also \cite[Corollary 4.6]{GG}). 
\end{rmk}

\subsection{Hitchin and positive representations}\label{HitchinR}

For a closed surface $S$ of genus $g\geq 2$, Hitchin representations were identified in \cite{Hitchin} as the connected component of the $\pslnr$-representation variety \[ \mathcal{X}_n(S,\mathbb{R})\coloneqq Hom(\pi_1(S),\pslnr)/\pslnr \]
that contains the \emph{$n$-Fuchsian} representations, where $\rho=\iota_n \circ \rho_2$, where $\rho_2:\pi_1(S) \rightarrow \pslr$ is a Fuchsian representation and $\iota_n$ is the unique irreducible representation $\iota_n:\pslr\rightarrow \pslnr$. 

\vspace{.05in}

    Hitchin showed that like Teichm\"{u}ller space $\mathcal{T}(S)$, this component is diffeomorphic to a ball, of $\mathbb{R}^{(6g-6)(n^2-1)}$. Moreover, in \cite{Labourie}, Labourie showed that the Hitchin representations are discrete and faithful. In fact, he established a dynamical property called Anosov that shows the existence of a $\rho$-equivariant continuous map $\beta: \partial_{\infty}\widetilde{S} \rightarrow \mathcal{F}(\mathbb{C}^n)$. This shall be crucial in Section 5 to define a framing when we handle the case of a closed surface. 

\vspace{.05in}

More importantly for us, the work of Fock-Goncharov \cite{FG06} introduced the notion of \textit{positive (framed) representations}, namely those that have real and positive Fock-Goncharov coordinates, as described in the previous subsection. 

\vspace{.05in}

\noindent For the case when $S$ is a punctured surface of negative Euler characteristic, we shall define:

\begin{defn}[Positive representation]\label{puncHit}
A representation $\rho : \pi_1(S) \rightarrow \pslnr$ is a \emph{Hitchin representation} if there exists a framing $\beta$ such that the framed representation $\hat{\rho} = (\rho,\beta)$ has well-defined Fock-Goncharov coordinates with respect to some ideal triangulation $\tau$, and the coordinates are all real and positive. 
\end{defn}

\begin{rmk}
    In the punctured surface case, the set of Hitchin representations forms an open subset of the $\pslnr$-representation variety but not a component. Indeed,  since the surface-group is a free group, the $\pslnr$-representation variety is in this case connected. 
\end{rmk}

\noindent The notion of a Hitchin representation as a positive representation was introduced by Labourie-McShane in \cite{LabMc} for the case when the boundary holonomies are purely loxodromic, and was generalized by Canary-Zhang-Zimmer in \cite{CZZ} to also allow for boundary components with unipotent holonomy.   

\vspace{.05in}

\noindent We also introduce the following notion:

  \begin{defn}[Bending fiber]\label{defn:bf}
     Let   ${\rho}_1, {\rho}_2:\pi_1(S) \to \pslnc$ be two representations that can be equipped with framings such that the resulting framed representations have well-defined Fock-Goncharov coordinates with respect to the same ideal triangulation $\tau$. Then they are said to be \emph{in the same bending fiber} if the absolute values of each of these Fock-Goncharov coordinates are equal, in which case they are said to be obtained by a \textit{bending deformation} from each other. 
  \end{defn}

\subsection{Monodromy via Fock-Goncharov coordinates}\label{mmat} In the rest of this subsection we shall describe a further decomposition of the $T$ and $E$ matrices in \eqref{rhogamma} as a finer product of simpler matrices due to Fock-Goncharov (see \cite[Chapter 9]{FG06}) which they had needed for proving some positivity results, and we shall need in Section 3.

To obtain these matrices, Fock-Goncharov introduced the notion of a \textit{snake} and their moves -- in this subsection we aim to give a self-contained account of this, as at places our conventions (and therefore computations) will differ from those of Fock-Goncharov (compare, for instance, our \Cref{snkmv} with \cite[Fig 9.14]{FG06}).

\begin{defn}[Snake] A \emph{snake} is an oriented path on the edges of an $(n-1)$-triangulation starting from a vertex of the main triangle up to its opposite edge consisting of exactly $n-1$ edges of the small triangles.  (See for example \Cref{ftrian}, where the snake is highlighted in green.) 
\end{defn}

A snake provides a \textit{projective basis} of $\mathbb{C}^n$, namely a basis of $\mathbb{C}^n$ up to multiplication by a common factor, as follows:

First, we associate the one-dimensional subspace $V^{a,b,c}=A_{n-a}\cap B_{n-b}\cap C_{n-c}$ (where $a+b+c=n-1$) with each vertex of the $(n-1)$-triangulation. Choose a non-zero vector $v_1$ from the line associated with the initial vertex of the snake (also called the \textit{snake head}). Then there is a unique vector $v_2$ in the line associated with the counterclockwise next vertex of the adjacent unshaded small triangle such that $v_1+v_2$ belongs to the line associated to the remaining vertex of that unshaded triangle (see \Cref{prjBasis}).

\begin{figure}[ht]
\centering
\begin{tikzpicture}
\draw (0,0)-- (0.58,1)-- (1.15,0)-- (0,0);
\draw (-0.2,0) node {$v_1$};
\draw (0.58,1.2) node {$v_2$};
\draw (1.8,0) node {$v_1+v_2$};
\end{tikzpicture}
\setlength{\belowcaptionskip}{-8pt}
\caption{Assigning vectors to a small triangle.}
\label{prjBasis}
\end{figure}
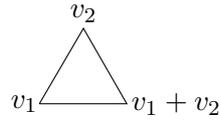

Passing to the next adjacent small triangle, we can iteratively obtain a vector for each vertex on the snake, which forms a basis of $\mathbb{C}^n$. For a different choice of the initial vector $v_1$ on the line, all elements of this basis are multiplied by a common factor. Thus, we in fact have a \textit{projective} basis uniquely determined by the snake. 

The $T(t)$ and $E(e)$ matrices can now be thought of as the change of basis matrix to change from one projective basis to another, when the snakes correspond to two (oriented) sides of an ideal triangle. We shall first illustrate this for the matrix $T(t)$, in the case where $n=3$, in the following example. 

\vspace{.05in}

\begin{example}\label{n3M}
In this example, $n=3$ and we shall refer to \Cref{flgcng} and \Cref{snkcng}. Let $A, B, C$ be the flags associated with the vertices of an ideal triangle. With a slight abuse of notation we shall call the vertices of the triangle by the flags associated with it. Let us fix a vector $a$ from the one dimensional subspace of $A$. We immediately obtain the vectors at the other vertices such that
\begin{equation}\label{vReln}
    a+p=q,\quad p+c=r_1,\quad q+r_2=b.
\end{equation}
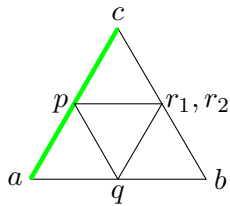
\begin{figure}[ht]
	\begin{tikzpicture}
	\draw (0,0)-- (1.15,2)-- (2.31,0)-- (0,0);
	\draw[green, line width=0.6mm] (0,0)-- (1.15,2);
	\draw (1.15,0)-- (0.58,1)-- (1.73,1)-- (1.15,0);
	\draw (-0.2,0) node {$a$} (0.4,1) node {$p$} (1.15,2.2) node {$c$} (2.5,0) node {$b$} (1.15,-0.2) node {$q$} (2.2,1) node {$r_1,r_2$};
	\end{tikzpicture}
	\setlength{\belowcaptionskip}{-8pt}
	\caption{Basis transformation for $n=3$.}
	\label{flgcng}
\end{figure}
Without loss of generality, let us assume $A=(v_1,l_1), B=(v_2,l_2)$ and $ C=(v_3,l_3)$, where \[ v_1=\begin{pmatrix}
    0\\0\\1
\end{pmatrix}, v_2=\begin{pmatrix}
    1\\0\\1
\end{pmatrix}, v_3=\begin{pmatrix}
    0\\1\\1
\end{pmatrix} \] and $l_1,l_2,l_3$ are the lines such that \[ l_1(x,y,z)=Xx+y=0,~ l_2(x,y,z)=x-z=0 \text{ and } l_3(x,y,z)=y-z=0 \] where $X \in \mathbb{C}$ coincides with the triple ratio of $A,B,C$ (\textit{c.f.} \Cref{defn:tr}) since \[ T_{111}(A,B,C)=\frac{l_1 (v_2) l_2 (v_3) l_3 (v_1)}{l_1 (v_3) l_2 (v_1) l_3 (v_2)}=\frac{X\cdot (-1)\cdot (-1)}{1\cdot (-1)\cdot (-1)}=X. \] Let us choose $a=v_1$. We already have $\langle b\rangle = \langle v_2 \rangle,~ \langle c \rangle= \langle v_3 \rangle$. From \Cref{flgcng}, we have \[ \langle p \rangle = l_1\cap l_3= \bigg\langle \begin{pmatrix}
    1\\-X\\-X
\end{pmatrix} \bigg\rangle, \langle r_1 \rangle=\langle r_2 \rangle=l_2\cap l_3= \bigg\langle \begin{pmatrix}
    1\\1\\1
\end{pmatrix} \bigg\rangle \text{ and } \langle q \rangle= l_1\cap l_2 = \bigg\langle\begin{pmatrix}
    1\\-X\\1
\end{pmatrix} \bigg\rangle. \] We see that, the vectors \[ p=\begin{pmatrix}
    \frac{1}{1+X}\\ -\frac{X}{1+X}\\ -\frac{X}{1+X}
\end{pmatrix}, q=\begin{pmatrix}
    \frac{1}{1+X}\\ -\frac{X}{1+X}\\ \frac{1}{1+X}
\end{pmatrix}, c= \begin{pmatrix}
    0\\1\\1
\end{pmatrix}, r_1=\begin{pmatrix}
    \frac{1}{1+X}\\ \frac{1}{1+X}\\ \frac{1}{1+X}
\end{pmatrix}, r_2=\begin{pmatrix}
    \frac{X}{1+X}\\ \frac{X}{1+X}\\ \frac{X}{1+X}
\end{pmatrix}, b=\begin{pmatrix}
    1\\0\\1
\end{pmatrix} \] satisfy \cref{vReln}. Hence we get, $r_2=Xr_1$, where $X$ is the triple ratio of $A,B$ and $C$.

Now $\{a,p,c\}, \{a,p,r_1\},\{a,q,r_2\}$ and $\{a,q,b\}$ are the ordered bases corresponding to the snakes in \Cref{snkcng} respectively. 
\noindent Let $T_1, T_2, T_3\in \psltc$ be the change of basis matrices from the snake $AC$ to $AB$ as in \Cref{snkcng}.

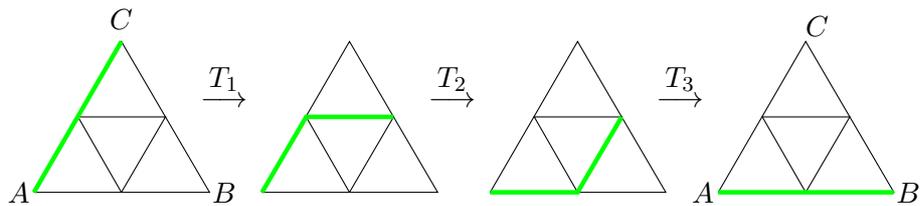
\begin{figure}[ht]
    \begin{tikzpicture}
    \draw (0,0)-- (1.15,2)-- (2.31,0)-- (0,0);
    \draw (1.15,0)-- (0.58,1)-- (1.73,1)-- (1.15,0);
    \draw[green, line width=0.6mm] (0,0)-- (1.15,2);
    \draw (2.5,1.2) node {$\longrightarrow$} (2.5,1.5) node {$T_1$};
    \draw (-0.2,0) node {$A$} (1.15,2.3) node {$C$} (2.5,0) node {$B$};
    \draw (3,0)-- (4.15,2)-- (5.31,0)-- (3,0);
    \draw (4.15,0)-- (3.58,1)-- (4.73,1)-- (4.15,0);
    \draw[green, line width=0.6mm] (3,0)-- (3.58,1)-- (4.73,1);
    \draw (5.5,1.2) node {$\longrightarrow$} (5.5,1.5) node {$T_2$};
    \draw (6,0)-- (7.15,2)-- (8.31,0)-- (6,0);
    \draw (7.15,0)-- (6.58,1)-- (7.73,1)-- (7.15,0);
    \draw[green, line width=0.6mm] (6,0)-- (7.15,0)-- (7.73,1);
    \draw (8.5,1.2) node {$\longrightarrow$} (8.5,1.5) node {$T_3$};
    \draw (9,0)-- (10.15,2)-- (11.31,0)-- (9,0);
    \draw (10.15,0)-- (9.58,1)-- (10.73,1)-- (10.15,0);
    \draw[green, line width=0.6mm] (9,0)-- (11.31,0);
    \draw (8.8,0) node {$A$} (10.3,2.2) node {$C$} (11.5,0) node {$B$};
    \end{tikzpicture}
    \setlength{\belowcaptionskip}{-8pt}
    \caption{Snake move for $n=3$.}
    \label{snkcng}
\end{figure}
 \noindent We observe that 
 \[ \begin{pmatrix}
1 & 0 & 0\\
0 & 1&0\\
0&1&1     
 \end{pmatrix}
\begin{pmatrix}
    a\\p\\c
\end{pmatrix}=\begin{pmatrix}
    a\\p\\r_1
\end{pmatrix}.
 \] Hence,
\[ T_1= \begin{pmatrix}
1 & 0 & 0\\
0 & 1&0\\
0&1&1
\end{pmatrix}.\]
Also, \[ \begin{pmatrix}
1&0&0\\
0&1&0\\
0&0&X
\end{pmatrix}
\begin{pmatrix}
1&0&0\\
1&1&0\\
0&0&1
\end{pmatrix}
\begin{pmatrix}
    a\\p\\r_1
\end{pmatrix}=\begin{pmatrix}
    a\\q\\r_2
\end{pmatrix} \text{ and } \begin{pmatrix}
1 & 0 & 0\\
0 & 1&0\\
0&1&1
\end{pmatrix}\begin{pmatrix}
    a\\q\\r_2
\end{pmatrix}=\begin{pmatrix}
    a\\q\\b
\end{pmatrix}
\]
which implies that
\[
T_2=\begin{pmatrix}
1&0&0\\
0&1&0\\
0&0&X
\end{pmatrix}
\begin{pmatrix}
1&0&0\\
1&1&0\\
0&0&1
\end{pmatrix},\ 
T_3=\begin{pmatrix}
1 & 0 & 0\\
0 & 1&0\\
0&1&1
\end{pmatrix}. \]
So we compute that the change of basis matrix from snake $AC$ to $AB$ is 
\begin{equation*}
\begin{split}
M_{AC\rightarrow AB}=T_3T_2T_1 &= \begin{pmatrix}
1 & 0 & 0\\
0 & 1&0\\
0&1&1
\end{pmatrix}
\begin{pmatrix}
1&0&0\\
0&1&0\\
0&0&X
\end{pmatrix}
\begin{pmatrix}
1&0&0\\
1&1&0\\
0&0&1
\end{pmatrix}
\begin{pmatrix}
1 & 0 & 0\\
0 & 1&0\\
0&1&1
\end{pmatrix} \\
&=\begin{pmatrix}
1&0&0\\
1&1&0\\
1&1+X&X
\end{pmatrix}.
\end{split}
\end{equation*}
To determine a projective basis corresponding to the snake $CA$, let us fix the vector $c$ at $C$. From \cref{vReln}, we have $c+(-r_1)=-p \text{ and } (-p)+q=a$. So $\{c,-p,a\}$ is a projective basis corresponding to the snake $CA$. So we have \[ M_{CA\rightarrow AC}=\begin{pmatrix}
0&0&1\\
0&-1&0\\
1&0&0
\end{pmatrix}. \]
Hence we finally obtain 
\begin{equation}\label{Tt3}
\begin{split}
T(t)=M_{CA\rightarrow AB}=M_{AC\rightarrow AB}\cdot M_{CA \rightarrow AC} &=\begin{pmatrix}
1&0&0\\
1&1&0\\
1&1+X&X
\end{pmatrix}\begin{pmatrix}
0&0&1\\
0&-1&0\\
1&0&0
\end{pmatrix} \\
&=\begin{pmatrix}
0&0&1\\
0&-1&1\\
X&-1-X&1
\end{pmatrix}.
\end{split}
\end{equation}
This concludes the example. 
\end{example}
\begin{rmk}
    As the above example demonstrates, we follow the convention in \cite{FG06}, namely, we write the ordered basis corresponding to a snake as a column vector and left-multiply it with the change-of-basis matrix. In a different convention, if we write the ordered basis as a row vector and perform a right-multiplication with the change-of-basis matrix, we would obtain matrices that differ by a transpose. In that case, the order of the matrix product in \cref{rhogamma} would be reversed.
\end{rmk}
\vspace{.1in} 
\noindent We shall now generalize the above example to obtain an product expansion (for general $n$) of 
\begin{equation}\label{Tt}
    T(t)=M(t)_{AC\rightarrow AB}\cdot M(t)_{CA \rightarrow AC} = M(t) \cdot S
\end{equation}
where for the second equality $M(t)_{CA \rightarrow AC} =S $, a constant matrix defined by 
\begin{equation}\label{S}
S=\begin{pmatrix}
0 & \cdots & \cdots & 0 & 1 \\
\vdots & & \reflectbox{$\ddots$} & -1 & 0\\
\vdots & \reflectbox{$\ddots$} & \reflectbox{$\ddots$} & \reflectbox{$\ddots$} & \vdots \\
0 & \reflectbox{$\ddots$} & \reflectbox{$\ddots$} & & \vdots \\
(-1)^{n+1} & 0 & \cdots & \cdots & 0
\end{pmatrix}_{n\times n}.
\end{equation} 
and $M(t)_{AC\rightarrow AB}$ is abbreviated to $M(t)$. 

\vspace{.05in} 

\noindent Let us briefly discuss the iterative process to derive the expansion for $M(t)$. The initial snake $AC$ moves to the final snake $AB$ through the $(n-1)$ steps as shown in \Cref{snkmv}, where in each step, the intermediate snake moves happen in the steps shown in \Cref{intmv}.

\begin{figure}[ht] 
   \begin{tikzpicture}[scale=0.5]
   \draw (0,0)-- (2.31,4)-- (4.62,0)-- (0,0);
   \draw (1.15,0)--(2.89,3) --(1.73,3) --(3.46,0) --(4.04,1)--(0.58,1) --(1.15,0);
   \draw (1.15,2)-- (3.46,2)-- (2.31,0)-- (1.15,2);
   \draw[green, line width=0.4mm] (0,0)-- (2.31,4);
   \draw (5,2.5) node {$\longrightarrow$} (5,3) node {$T_1$};
   \draw (6,0)-- (8.31,4)-- (10.62,0)-- (6,0);
   \draw (7.15,0)--(8.89,3) --(7.73,3) --(9.46,0) --(10.04,1)--(6.58,1) --(7.15,0);
   \draw (7.15,2)-- (9.46,2)-- (8.31,0)-- (7.15,2);
   \draw[green, line width=0.4mm] (6,0)-- (7.73,3)-- (8.89,3);
   \draw (11,2.5) node {$\longrightarrow$} (11,3) node {$T_2$};
   \draw (12,0)-- (14.31,4)-- (16.62,0)-- (12,0);
   \draw (13.15,0)--(14.89,3) --(13.73,3) --(15.46,0) --(16.04,1)--(12.58,1) --(13.15,0);
   \draw (13.15,2)-- (15.46,2)-- (14.31,0)-- (13.15,2);
   \draw[green, line width=0.4mm] (12,0)-- (13.15,2)-- (15.46,2);
   \draw[dashed,->] (16.2,2.5) -- (17.7,2.5);
   \draw (18,0)-- (20.31,4)-- (22.62,0)-- (18,0);
   \draw (19.15,0)--(20.89,3) --(19.73,3) --(21.46,0) --(22.04,1)--(18.58,1) --(19.15,0);
   \draw (19.15,2)-- (21.46,2)-- (20.31,0)-- (19.15,2);
   \draw[green, line width=0.4mm] (18,0)-- (18.58,1)-- (22.04,1);
   \draw (23,2.5) node {$\longrightarrow$} (23,3.2) node {$T_{n-1}$};
   \draw (24,0)-- (26.31,4)-- (28.62,0)-- (24,0);
   \draw (25.15,0)--(26.89,3) --(25.73,3) --(27.46,0) --(28.04,1)--(24.58,1) --(25.15,0);
   \draw (25.15,2)-- (27.46,2)-- (26.31,0)-- (25.15,2);
   \draw[green, line width=0.4mm] (24,0)-- (28.62,0);
   \end{tikzpicture}
   \caption{Snake moves in $n$ steps.}
   \label{snkmv}	
\end{figure}
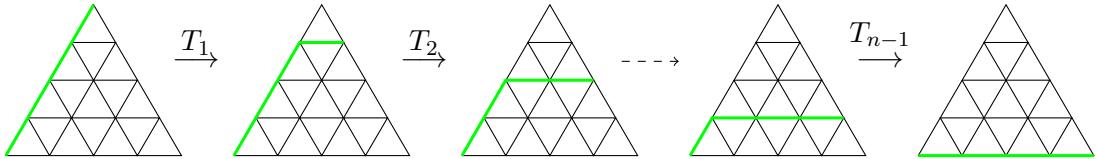

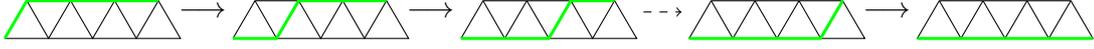
\begin{figure}[htp]
	\begin{tikzpicture}[scale=0.5]
	\draw (0,0)-- (0.58,1)-- (4.04,1)-- (4.62,0)-- (0,0);
	\draw (0.58,1)-- (1.15,0)-- (1.73,1)-- (2.31,0)-- (2.89,1)-- (3.46,0)-- (4.04,1);
	\draw[green, line width=0.4mm] (0,0)-- (0.58,1)-- (4.04,1);
	\draw (5.2,0.7) node {$\longrightarrow$};
	\draw (6,0)-- (6.58,1)-- (10.04,1)-- (10.62,0)-- (6,0);
	\draw (6.58,1)-- (7.15,0)-- (7.73,1)-- (8.31,0)-- (8.89,1)-- (9.46,0)-- (10.04,1);
	\draw[green, line width=0.4mm] (6,0)-- (7.15,0)-- (7.73,1)-- (10.04,1);
	\draw (11.2,0.7) node {$\longrightarrow$};
	\draw (12,0)-- (12.58,1)-- (16.04,1)-- (16.62,0)-- (12,0);
	\draw (12.58,1)-- (13.15,0)-- (13.73,1)-- (14.31,0)-- (14.89,1)-- (15.46,0)-- (16.04,1);
	\draw[green, line width=0.4mm] (12,0)-- (14.31,0)-- (14.89,1)-- (16.04,1);
	\draw[dashed,->] (16.8,0.7) -- (17.8,0.7);
	\draw (18,0)-- (18.58,1)-- (22.04,1)-- (22.62,0)-- (18,0);
	\draw (18.58,1)-- (19.15,0)-- (19.73,1)-- (20.31,0)-- (20.89,1)-- (21.46,0)-- (22.04,1);
	\draw[green, line width=0.4mm] (18,0)-- (21.46,0)-- (22.04,1);
	\draw (23.2,0.7) node {$\longrightarrow$};
	\draw (24,0)-- (24.58,1)-- (28.04,1)-- (28.62,0)-- (24,0);
	\draw (24.58,1)-- (25.15,0)-- (25.73,1)-- (26.31,0)-- (26.89,1)-- (27.46,0)-- (28.04,1);
	\draw[green, line width=0.4mm] (24,0)-- (28.62,0);
	\end{tikzpicture}
	\caption{Intermediate snake moves in each step.}
	\label{intmv}
\end{figure}
Notice that all the snake moves consist of two kinds of elementary snake moves, namely, Type \textit{I} and Type \textit{II}, as shown in \Cref{elemSnake}.
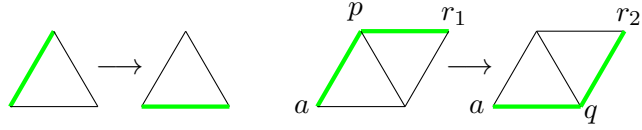
\begin{figure}[ht]
	\centering
	\begin{tikzpicture}[scale=0.5]
	\draw [line width=1.6pt,color=green] (0,0)-- (1.15,2);
	\draw [line width=1.6pt,color=green] (1.15,2)-- (3.46,2);
	\draw (3.46,2)-- (2.31,0);
	\draw (2.31,0)-- (0,0);
	\draw (1.15,2)-- (2.31,0);
	\draw (5.77,2)-- (4.62,0);
	\draw (5.77,2)-- (8.08,2);
	\draw [line width=1.6pt,color=green] (6.93,0)-- (4.62,0);
	\draw (5.77,2)-- (6.93,0);
	\draw [line width=1.6pt,color=green] (8.08,2)-- (6.93,0);
	\draw (-4.62,0)-- (-3.46,2);
	\draw (-3.46,2)-- (-2.31,0);
	\draw [line width=1.6pt,color=green] (-2.31,0)-- (-4.62,0);
	\draw [line width=1.6pt,color=green] (-8.08,0)-- (-6.93,2);
	\draw (-6.93,2)-- (-5.77,0);
	\draw (-5.77,0)-- (-8.08,0);
	\draw (-5.2,1) node {$\longrightarrow$};
	\draw (4,1) node {$\longrightarrow$};
        \draw (-0.4,0) node {$a$};
        \draw (1.01,2.52) node {$p$};
        \draw (7.2,-0.2) node {$q$};
        \draw (3.6,2.4) node {$r_1$};
        \draw (4.2,0) node {$a$};
        \draw (8.2,2.39) node {$r_2$};
	\end{tikzpicture}
	\caption[Elementary snake moves]{Elementary snake moves of Type \textit{I} (left) and Type \textit{II} (right).}
	\label{elemSnake}
\end{figure}
Also, notice that elementary snake moves of Type \textit{I} occur only at the end of each intermediate snake moves and in this snake move, only the last basis vector is changed. Let
\begin{equation}\label{F_i}
F_i \coloneqq \begin{pmatrix}
1 & 0 & & \cdots &  & 0 \\
0 & \ddots   \\
\vdots & \ddots & 1 & 0 & & \vdots \\
& & 1 & 1 & \\
\vdots & & & \ddots & \ddots & 0 \\
0 & \cdots & & & 0 & 1 
\end{pmatrix}_{n\times n}
\
\begin{matrix}\text{(i.e., 1 on the diagonals and the only}\\ \text{ non-diagonal 1 at $(i+1,i)$-th position).} \end{matrix}
\end{equation} 
Clearly, $F_{n-1}$ is the change of basis matrix corresponding to the elementary snake move of Type \textit{I}. So we get, $T_1=F_{n-1}$. Let
\begin{equation}\label{H_i}
H_i(x) \coloneqq diag(1,1,\cdots,1,\underbrace{x,\cdots, x}_{i}).
\end{equation}
Let us consider the Type \textit{II} move shown in \Cref{elemSnake} and let $X_{a,b,c}$ be the triangle invariant associated with the downward facing small triangle for $a+b+c=n$. Similar to \cref{vReln}, we get \[ a+p=q, \quad r_2=r_1X_{a,b,c}. \]
 Now notice that, $H_i(X_{a,b,c})F_{n-i-1}$ for some $i$ is the change of basis matrix for the elementary snake move of Type \textit{II}, similar to the $T_2$ matrix in \Cref{n3M}. In \Cref{snkmv}, notice that the snake-move for each $T_i$ matrix for $1<i$ consists of $(i-1)$ elementary moves of Type \textit{II} followed by one elementary Type \textit{I} move. Adjusting the indices properly, we obtain 
$$T_2=F_{n-1}H_1(X_{0,0,n-3})F_{n-2}$$
$$\cdots$$
\[ T_{n-1}=F_{n-1}H_1(X_{n-3,0,0})F_{n-2}H_2(X_{n-4,1,0})F_{n-3}\cdots H_{n-2}(X_{0,n-3,0})F_1. \] 
So we finally obtain 
\begin{equation}\label{Mt}
M(t)=T_{n-1}\cdots T_2T_1= \prod_{j=1}^{n-1}\bigg[F_{n-1}\prod_{i=1}^{n-j-1}\bigg(H_i(X_{i-1,n-i-j-1,j-1})F_{n-i-1}\bigg)\bigg]
\end{equation}

Now to derive an expansion of the matrix $E(e)$, we shall first handle the base case for $n=2$. Let $A,B,C,D$ be four flags associated counterclockwise with the vertices of two adjacent triangles as in \Cref{edgmat}.

\begin{figure}[ht]
	\begin{center}
		\begin{tikzpicture}[scale=0.4]
		\draw (0,0)-- (2,2)-- (4,0)-- (2,-2)-- (0,0)-- (4,0);
		\draw[green, line width=0.6mm] (0,0)-- (4,0);
		\draw (-0.4,0) node {$A$} (2,-2.4) node {$B$} (4.4,0) node {$C$} (2,2.4) node {$D$};
		\end{tikzpicture}
		\caption{Four flags around an edge.}
		\label{edgmat}
	\end{center}
\end{figure}
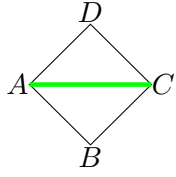

 Fix a vector $a$ from the one-dimensional subspace of $A$. Then we obtain unique vectors $c,d$ in the one-dimensional subspaces of $C$ and $D$ respectively such that $a+d=c$. Moreover, we obtain unique vectors $c',b$ in the one dimensional subspaces of $C$ and $B$ respectively such that $a+c'=b$. Let $(AC)^+$ and $(AC)^-$ denote the snakes $AC$ that come from the triangles $\Delta ACD$ and $\Delta ABC$ respectively. Then $\{a, c\}$ and $\{a,c'\}$ are the projective bases corresponding to the snakes $(AC)^+$ and $(AC)^-$ respectively. We can easily check that $c'=Z\cdot c$, where $Z$ is the cross-ratio of the projectivization of the lines associated to $A,B,C$ and $D$ in $\mathbb{CP}^1$. Hence we get
\[  
M_{(AC)^+ \rightarrow (AC)^-}=
\begin{pmatrix}
1 & 0 \\
0 & Z
\end{pmatrix}\]
and hence 
\[ E(e)=M_{(CA)^+ \rightarrow (AC)^-}=M_{(AC)^+ \rightarrow (AC)^-}\cdot M_{(CA)^+ \rightarrow (AC)^+}=
\begin{pmatrix}
1 & 0\\
0& Z
\end{pmatrix}\cdot
\begin{pmatrix}
0 & 1\\
-1 & 0
\end{pmatrix}=
\begin{pmatrix}
0 & 1\\
-Z & 0
\end{pmatrix}.  \] 
We obtain the general form 
\begin{equation}
M_{(AC)^+\rightarrow (AC)^-}= diag(1,z_{n-2},z_{n-3}z_{n-2},\cdots,z_0z_1\cdots z_{n-2}),
\end{equation}
by induction on $n$, where $z_{n-2},z_{n-3},\cdots,z_0$ are the coordinates associated to the edge $CA$. For more details, readers are referred to \S9 of \cite{FG06} and \S2 of \cite{Douglas}. As before, we have
\begin{equation}\label{Ee}
E(e)=M_{(AC)^+\rightarrow (AC)^-}\cdot S = diag(1,z_{n-2},z_{n-3}z_{n-2},\cdots,z_0z_1\cdots z_{n-2}) \cdot S.
\end{equation}
In particular, for $n=3, M_{(AC)^+\rightarrow (AC)^-}= diag(1,y,xy)$ where $e= \{x,y\}$ are the edge-invariants of the edge and 
\begin{equation}\label{Ee3}
E(e)= \begin{pmatrix}
1 & 0 & 0 \\
0 & y & 0 \\
0 & 0 & xy
\end{pmatrix}\cdot
\begin{pmatrix}
0&0&1\\
0&-1&0\\
1&0&0
\end{pmatrix}=
\begin{pmatrix}
0&0&1\\
0&-y&0\\
xy&0&0
\end{pmatrix}.
\end{equation}

\vspace{.1in} 

We shall crucially  use \eqref{Tt}, \eqref{Ee} and the decomposition \eqref{Mt} later in this article.

\subsection{Weighted planar network and weight matrix}\label{wpnwm}
In this subsection we shall introduce the basic theory of weighted planar networks; this arises in the context of  totally positive matrices that we first define (see, for example, \cite{FZ}), and forms an important tool in the proof of our main result.

\begin{defn}\label{totpos}
	A real $n\times n$ matrix $M$ is said to be \emph{totally positive (respectively, non-negative)} if all its minors are positive (respectively, non-negative).
\end{defn}

\begin{defn}[Weighted planar network]
	A {(weighted) planar network} $(\Gamma,\omega)$ of order $n$ is an acyclic directed planar graph $\Gamma$ in which $2n$ boundary vertices are distinguished as $n$ sources and $n$ sinks and each of the edges $e$ are assigned scalar weights $\omega(e)$.  See  \Cref{network} for an example -- note that in the figures we shall assume the edges are directed from left to right; in particular, A slanted edge of type `` \tikz{\draw (0,0) -- (-0.3,0.3);} " is directed in the downward ($\searrow$) direction and a slanted edge of type `` \tikz{\draw (0,0) -- (0.3,0.3);} " is directed in the upward ($\nearrow$) direction. 
\end{defn}
 
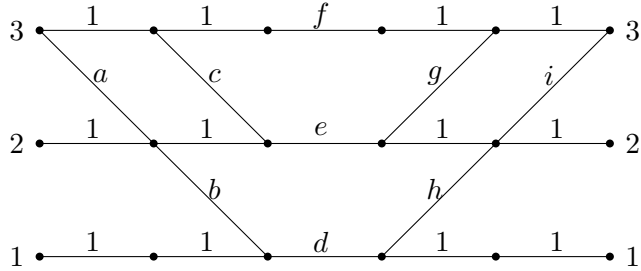
\begin{figure}[ht]
\begin{center}
	\begin{tikzpicture}
	\fill (0,0) circle (1.5pt) (1.5,0) circle (1.5pt) (3,0) circle (1.5pt) (4.5,0) circle (1.5pt) (6,0) circle (1.5pt) (7.5,0) circle (1.5pt) (0,1.5) circle (1.5pt) (1.5,1.5) circle (1.5pt) (3,1.5) circle (1.5pt) (4.5,1.5) circle (1.5pt) (6,1.5) circle (1.5pt) (7.5,1.5) circle (1.5pt) (0,3) circle (1.5pt) (1.5,3) circle (1.5pt) (3,3) circle (1.5pt) (4.5,3) circle (1.5pt) (6,3) circle (1.5pt) (7.5,3) circle (1.5pt);
	\draw (0,0) -- (7.5,0);
	\draw (0,1.5) -- (7.5,1.5);
	\draw (0,3) -- (7.5,3);
	\draw (0,3) -- (3,0);
	\draw (4.5,0) -- (7.5,3);
	\draw (1.5,3) -- (3,1.5);
	\draw (4.5,1.5) -- (6,3);
	\draw (-.3,0) node {$1$} (7.8,0) node {$1$} (-.3,1.5) node {$2$} (7.8,1.5) node {$2$} (-.3,3) node {$3$} (7.8,3) node {$3$} (.7,.2) node {$1$} (2.2,.2) node {$1$} (3.7,.2) node {$d$} (5.3,.2) node {$1$} (6.8,.2) node {$1$} (2.3,.9) node {$b$} (5.2,.9) node {$h$} (.7,1.7) node {$1$} (2.2,1.7) node {$1$} (3.7,1.7) node {$e$} (5.3,1.7) node {$1$} (6.8,1.7) node {$1$} (.8,2.4) node {$a$} (2.3,2.4) node {$c$} (5.2,2.4) node {$g$} (6.7,2.4) node {$i$} (.7,3.2) node {$1$} (2.2,3.2) node {$1$} (3.7,3.2) node {$f$} (5.3,3.2) node {$1$} (6.8,3.2) node {$1$};
	\end{tikzpicture}
	\caption{A weighted planar network.}
	\label{network}
 \end{center}
\end{figure}

 
 \begin{defn}[Weights of paths, weight matrix]\label{wmatrix}
 	 The weight of a directed path in $\Gamma$ is defined to be the product of the weights of its edges. The {weight matrix} $\mathcal{W}(\Gamma;\omega)$ corresponding to a planar network $(\Gamma,\omega)$ of order $n$ is an $n \times n$ matrix whose $(i,j)$-entry is the sum of weights of all paths from the source $i$ to the sink $j$. We simply denote it by $\mathcal{W}(w)$ when the planar network $\Gamma$ is known and fixed.
 \end{defn} 
 \begin{example}
 	 The weight matrix of the network in \Cref{network} is the following
 	 \[ \begin{pmatrix}
 	 d & dh & dhi \\
 	 bd & bdh+e & bdhi+eg+ei \\
 	 abd & abdh+ae+ce & abdhi+e(a+c)(g+i)+f
 	 \end{pmatrix}.\] 
 \end{example}

 One can in fact, show that \textit{any} $3\times 3$ totally-nonnegative matrix is of the above form (see \cite{FZ}). 

 \vspace{.05in}

\noindent As mentioned earlier, weighted planar networks arise in the context of totally non-negative matrices via the following basic result. We were led to considering weighted planar networks because of this, and the fact (proved by Fock-Goncharov) that for a Hitchin representation $\rho: \pi_1(S)\rightarrow \pslnr,\ \rho(\gamma)$ is totally positive (non-negative) for any $\gamma\in \pi_1(S)$ is non-peripheral (resp. peripheral).

\begin{thm}\label{bren}\cite[Theorem 3.1]{Brenti}
	Every totally nonnegative matrix is the weight matrix of a planar network.
\end{thm}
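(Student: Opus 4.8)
The plan is to combine two ingredients: that placing planar networks \emph{in series} multiplies their weight matrices, and the classical bidiagonal factorization of totally nonnegative matrices. I would begin with the \emph{concatenation lemma}: given planar networks $(\Gamma_1,\omega_1)$ and $(\Gamma_2,\omega_2)$ of order $n$, form $\Gamma_1\ast\Gamma_2$ by identifying, for each $i$, the $i$-th sink of $\Gamma_1$ with the $i$-th source of $\Gamma_2$. A directed path in $\Gamma_1\ast\Gamma_2$ from a source to a sink meets the set of identified vertices in exactly one point, and so decomposes uniquely as a path in $\Gamma_1$ followed by a path in $\Gamma_2$; summing the product of the two half-weights over the middle vertex $\ell$ gives $\mathcal{W}(\Gamma_1\ast\Gamma_2)=\mathcal{W}(\Gamma_1)\cdot\mathcal{W}(\Gamma_2)$. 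Hence a concatenation of several planar networks realizes the product of their weight matrices.

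Next I would record the weight matrices of the basic ``chips''. A network of $n$ disjoint horizontal edges carrying weights $d_1,\dots,d_n$ has weight matrix $\operatorname{diag}(d_1,\dots,d_n)$. A network all of whose horizontal edges have weight $1$, together with a single slanted edge of weight $t$ joining two consecutive levels, has weight matrix $I+t\,E_{i,i+1}$ or $I+t\,E_{i+1,i}$ (with $E_{pq}$ the matrix having $1$ in position $(p,q)$ and $0$ elsewhere), according to the direction of the slanted edge; these are the elementary upper and lower bidiagonal matrices. By the concatenation lemma, any finite product of matrices of these three types is the weight matrix of the corresponding concatenated network.

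It then remains to show that an arbitrary totally nonnegative $n\times n$ real matrix $M$ factors as a product of elementary lower bidiagonal matrices, a nonnegative diagonal matrix, and elementary upper bidiagonal matrices, all with nonnegative parameters. I would prove this by induction on $n$: using total nonnegativity to locate a suitable nonzero entry in the first column (resp.\ first row) of $M$, one strips off elementary lower (resp.\ upper) bidiagonal factors on the left (resp.\ right) --- each such multiplication being an elementary row (resp.\ column) operation that preserves total nonnegativity --- until the first row and column become scalar multiples of $e_1$, leaving a bordered $(n-1)\times(n-1)$ totally nonnegative submatrix to which the inductive hypothesis applies. The step I expect to be the genuine obstacle is handling rank-deficient $M$, where rows or columns of the reduced matrix may vanish so the naive elimination stalls or would force negative parameters; here I would pass through the dense subset of totally \emph{positive} matrices (for which the elimination runs cleanly with all parameters automatically nonnegative), check that the factorization parameters remain bounded and nonnegative along a sequence of totally positive matrices converging to $M$, and take a limit to obtain the factorization of $M$ itself (alternatively, a more careful direct elimination choosing pivots by the positions of zero blocks avoids the limiting argument). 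Once the factorization is in hand, concatenating the corresponding elementary planar networks in the same order and invoking the concatenation lemma produces a planar network whose weight matrix is $M$, completing the proof.
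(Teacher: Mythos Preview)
The paper does not give its own proof of this statement: it is cited as \cite[Theorem 3.1]{Brenti} and invoked only as motivation (immediately after stating it, the authors remark that they were led to planar networks by this fact together with Fock--Goncharov's total positivity result). So there is nothing in the paper to compare your argument against.

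That said, your outline is the standard route and is essentially correct. The concatenation lemma you begin with is exactly \Cref{prp:1} of the paper, and the elementary ``chips'' you describe are the $F_i$ and $H_i(x)$ networks the paper uses later (see \Cref{hf}). The substantive part is the Loewner--Whitney bidiagonal factorization of a totally nonnegative matrix, and you have the right idea: reduce by elementary row/column operations that preserve total nonnegativity, and handle the singular case either by a careful direct elimination or by a limiting argument through totally positive matrices. One small caution on the limiting argument: the individual bidiagonal parameters need not remain bounded as you approach a degenerate $M$, so if you go that route you should extract a convergent subsequence of \emph{networks} (or of the weight matrices of partial products) rather than of the raw parameter vectors; the cleaner option is the direct combinatorial elimination, which is what Brenti and Fomin--Zelevinsky do.
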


\noindent We now mention some additional properties of weight matrices.  First, we define:

\begin{defn}[Concatentation] If $(\Gamma_1,\omega_1)$ and $(\Gamma_2,\omega_2)$ are two planar networks of of order $n$ (i.e.\ with $n$ sources and $n$ sinks). If we connect all the sinks of $\Gamma_1$ with the respective sources of $\Gamma_2$, we obtain a new planar network $(\Gamma,\omega)$ of order $n$, whose sources are those of $(\Gamma_1,\omega_1)$ and sinks are those of $(\Gamma_2,\omega_2)$. This is called the \emph{concatenation} of two planar networks. (See  \Cref{concat} for an example.) 
\end{defn}

\begin{figure}[ht]
	\begin{tikzpicture}
	\draw (4,4)-- (6,2);
	\draw (7,2)-- (9,4);
	\draw (4,4)-- (9,4);
	\draw (4,2)-- (9,2);
	\draw (4,3)-- (9,3);
	\draw (5,4)-- (6,3);
	\draw (7,3)-- (8,4);
	\draw (11,3)-- (16,3);
	\draw (16,4)-- (11,4);
	\draw (11,2)-- (16,2);
	\draw (11,2)-- (13,4);
	\draw (12,2)-- (13,3);
	\draw (16,2)-- (14,4);
	\draw (15,2)-- (14,3);
	\draw (5,0.4)-- (15,0.4);
	\draw (5,-0.6)-- (15,-0.6);
	\draw (5,-1.6)-- (15,-1.6);
	\draw (5,0.4)-- (7,-1.6);
	\draw (6,0.4)-- (7,-0.6);
	\draw (8,-0.6)-- (9,0.4);
	\draw (8,-1.6)-- (10,0.4);
	\draw (10,-1.6)-- (12,0.4);
	\draw (11,-1.6)-- (12,-0.6);
	\draw (13,-0.6)-- (14,-1.6);
	\draw (15,-1.6)-- (13,0.4);
	\begin{scriptsize}
	\fill [color=black] (4,2) circle (1.5pt);
	\draw[color=black] (3.82,2.04) node {$1$};
	\fill [color=black] (4,3) circle (1.5pt);
	\draw[color=black] (3.78,3.04) node {$2$};
	\fill [color=black] (4,4) circle (1.5pt);
	\draw[color=black] (3.78,4) node {3};
	\fill [color=black] (6,2) circle (1.5pt);
	\fill [color=black] (9,4) circle (1.5pt);
	\draw[color=black] (9.22,4.04) node {3};
	\fill [color=black] (9,2) circle (1.5pt);
	\draw[color=black] (9.2,2.08) node {1};
	\fill [color=black] (7,2) circle (1.5pt);
	\draw[color=black] (6.56,1.56) node {(a)};
	\fill [color=black] (9,3) circle (1.5pt);
	\draw[color=black] (9.2,3.06) node {2};
	\fill [color=black] (5,4) circle (1.5pt);
	\fill [color=black] (6,3) circle (1.5pt);
	\fill [color=black] (7,3) circle (1.5pt);
	\fill [color=black] (8,4) circle (1.5pt);
	\fill [color=black] (11,4) circle (1.5pt);
	\draw[color=black] (10.72,4.08) node {3};
	\fill [color=black] (11,3) circle (1.5pt);
	\draw[color=black] (10.76,3.06) node {2};
	\fill [color=black] (11,2) circle (1.5pt);
	\draw[color=black] (10.82,2.06) node {1};
	\fill [color=black] (13,4) circle (1.5pt);
	\fill [color=black] (16,4) circle (1.5pt);
	\draw[color=black] (16.2,4.06) node {$3$};
	\fill [color=black] (16,2) circle (1.5pt);
	\draw[color=black] (16.2,2.06) node {1};
	\fill [color=black] (16,3) circle (1.5pt);
	\draw[color=black] (16.22,3.06) node {2};
	\draw[color=black] (13.58,1.54) node {(b)};
	\fill [color=black] (14,4) circle (1.5pt);
	\fill [color=black] (12,2) circle (1.5pt);
	\fill [color=black] (13,3) circle (1.5pt);
	\fill [color=black] (14,3) circle (1.5pt);
	\fill [color=black] (15,2) circle (1.5pt);
	\fill [color=black] (5,0.4) circle (1.5pt);
	\draw[color=black] (4.72,0.46) node {3};
	\fill [color=black] (15,0.4) circle (1.5pt);
	\draw[color=black] (15.28,0.44) node {3};
	\fill [color=black] (5,-0.6) circle (1.5pt);
	\draw[color=black] (4.72,-0.54) node {2};
	\fill [color=black] (15,-0.6) circle (1.5pt);
	\draw[color=black] (15.26,-0.54) node {2};
	\fill [color=black] (5,-1.6) circle (1.5pt);
	\draw[color=black] (4.76,-1.52) node {1};
	\fill [color=black] (15,-1.6) circle (1.5pt);
	\draw[color=black] (15.26,-1.56) node {1};
	\draw[color=black] (9.98,-2.06) node {(c)};
	\fill [color=black] (7,-1.6) circle (1.5pt);
	\fill [color=black] (8,-1.6) circle (1.5pt);
	\fill [color=black] (10,0.4) circle (1.5pt);
	\fill [color=black] (10,-1.6) circle (1.5pt);
	\fill [color=black] (12,0.4) circle (1.5pt);
	\fill [color=black] (13,0.4) circle (1.5pt);
	\fill [color=black] (6,0.4) circle (1.5pt);
	\fill [color=black] (7,-0.6) circle (1.5pt);
	\fill [color=black] (8,-0.6) circle (1.5pt);
	\fill [color=black] (9,0.4) circle (1.5pt);
	\fill [color=black] (11,-1.6) circle (1.5pt);
	\fill [color=black] (12,-0.6) circle (1.5pt);
	\fill [color=black] (13,-0.6) circle (1.5pt);
	\fill [color=black] (14,-1.6) circle (1.5pt);
	\end{scriptsize}
	\end{tikzpicture}
	\caption{The planar network (c) is the concatenation of (a) and (b).}
	\label{concat}
\end{figure}
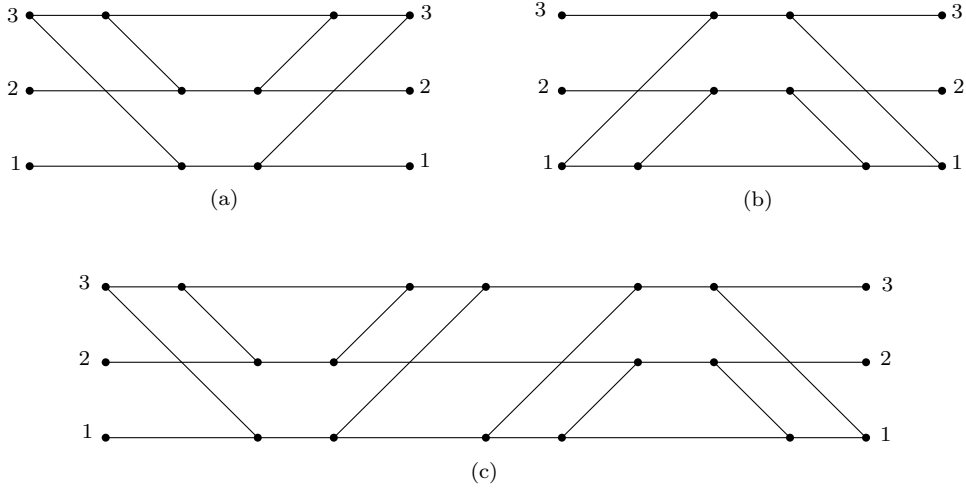

\begin{prop}\label{prp:1}
	Let $(\Gamma_1,\omega_1)$ and $(\Gamma_2,\omega_2)$ be two planar networks of order $n$ and $A$ and $B$ be their weight matrices respectively. Then the weight matrix of the concatenated planar network $(\Gamma,\omega)$ is $AB$.
\end{prop}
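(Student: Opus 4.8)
The plan is to work directly from the definition of the weight matrix in \Cref{wmatrix}, using the multiplicativity of the weight of a path under concatenation of paths together with a bijection between directed paths in $\Gamma$ and pairs of directed paths, one in $\Gamma_1$ and one in $\Gamma_2$, meeting at a common gluing vertex. I would write $A=\mathcal{W}(\Gamma_1;\omega_1)$ and $B=\mathcal{W}(\Gamma_2;\omega_2)$, and let $m_1,\dots,m_n$ be the vertices of $\Gamma$ obtained by identifying the $k$-th sink of $\Gamma_1$ with the $k$-th source of $\Gamma_2$. Recall that $A_{ik}$ is the sum of the weights of all directed paths in $\Gamma_1$ from source $i$ to sink $k$, that $B_{kj}$ is the analogous sum in $\Gamma_2$, and that $\mathcal{W}(\Gamma;\omega)_{ij}$ is the sum of the weights of all directed paths in $\Gamma$ from source $i$ to sink $j$; the task is to identify this last quantity with $(AB)_{ij}=\sum_{k=1}^{n}A_{ik}B_{kj}$.

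The first step is the structural claim that every directed path $P$ in $\Gamma$ from a source $i$ of $\Gamma_1$ to a sink $j$ of $\Gamma_2$ passes through exactly one of the vertices $m_1,\dots,m_n$. It passes through at least one, since the only vertices belonging to both halves are the $m_k$ and no source of $\Gamma_1$ can reach a sink of $\Gamma_2$ without meeting the interface. It passes through at most one, since within $\Gamma_1$ each $m_k$ (being a sink) has no outgoing edge, so once $P$ reaches $m_k$ it must continue along an edge of $\Gamma_2$, and within $\Gamma_2$ each $m_\ell$ (being a source) has no incoming edge, so $P$ can never return to the interface; here acyclicity of the planar networks is what makes this argument clean. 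Hence $P$ factors uniquely as a concatenation $P=P_1\ast P_2$ with $P_1$ a directed path of $\Gamma_1$ from $i$ to some $m_k$ and $P_2$ a directed path of $\Gamma_2$ from $m_k$ to $j$, and conversely every such pair of paths sharing the endpoint $m_k$ determines a unique $P$. Since the weight of a path is by definition the product of the weights of its edges, $\omega(P)=\omega_1(P_1)\,\omega_2(P_2)$.

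Summing over all paths from $i$ to $j$ in $\Gamma$, grouping them by the index $k$ of the gluing vertex they use, and applying the distributive law then gives
\[
\mathcal{W}(\Gamma;\omega)_{ij}
=\sum_{k=1}^{n}\Bigl(\sum_{P_1\colon i\to m_k}\omega_1(P_1)\Bigr)\Bigl(\sum_{P_2\colon m_k\to j}\omega_2(P_2)\Bigr)
=\sum_{k=1}^{n}A_{ik}B_{kj}=(AB)_{ij},
\]
which is the desired identity. I do not anticipate a real obstacle here: the only point that needs a careful word is the ``exactly once'' crossing of the interface, and that is precisely where the source/sink structure and acyclicity of the networks enter; should the gluing in the definition of concatenation introduce genuine new edges rather than identifications of vertices, their weights can simply be absorbed into one of the two networks and the argument goes through unchanged.
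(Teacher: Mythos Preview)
Your proof is correct and follows essentially the same approach as the paper: both argue that every directed path from source $i$ to sink $j$ in the concatenated network passes through exactly one of the gluing vertices, factor such a path as a concatenation of a path in $\Gamma_1$ and one in $\Gamma_2$, and then sum over the intermediate vertex to obtain $(AB)_{ij}$. If anything, you are a bit more careful than the paper in justifying the ``exactly once'' claim via the source/sink structure and acyclicity.
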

\begin{proof}
	Let $A=(a_{ij})_{n \times n},\ B=(b_{ij})_{n \times n}$ and $d_1,\ d_2, \cdots ,d_n$ be the concatenated vertices of $(\Gamma,\omega)$. Let $C=(c_{ij})_{n\times n}$ be the weight matrix of $(\Gamma,\omega)$. Clearly the sources and sinks of $(\Gamma,\omega)$ are the sources of $(\Gamma_1,\omega_1)$ and sinks of $(\Gamma_2,\omega_2)$ respectively. So $c_{ij}$ is the sum of the weights of all the paths from the $i$-th source of $(\Gamma_1,\omega_1)$ to the $j$-th sink of $(\Gamma_2,\omega_2)$ in $(\Gamma,\omega)$. Now every path from source $i$ to sink $j$ in $(\Gamma,\omega)$ has to pass through exactly one of these $d_i$'s. Let $p$ be such a path that passes through $d_k$ for some $k$. Then $a_{ik}$ is the sum of weights of all paths from the source $i$ to the sink $k$ in $(\Gamma_1,\omega_1)$. Let $P^1_{ik}$ be the set (possibly empty) of all such paths. Similarly, let $P^2_{kj}$ be the set of all the paths from source $k$ to sink $j$ in $(\Gamma_2,\omega_2)$. Each path in $(\Gamma,\omega)$ from source $i$ to sink $j$ that passes through $d_k$ comes by concatenating some path in $P^1_{ik}$ with some path in $P^2_{kj}$ and vice versa. So we get, \[ c_{ij}= \sum_{k=1}^{n}a_{ik}b_{kj}. \] Hence $C=AB$.   
\end{proof}

\vspace{.05in} 

The following \Cref{linds}, known as Lindstr\"{o}m's Lemma, shall play a crucial role later.  It shows how to compute minors of the weight matrix of a planar network directly from the planar network.

\begin{defn}[Vertex-disjoint family] 
	Let $I,J\subseteq \{1,\cdots,n \}$ such that $|I|=|J|$. A \emph{vertex-disjoint family} of paths from sources $I$ to sinks $J$ is a collection of disjoint paths on the planar network (that no two of them even have a common vertex), each of which starts from some sources in $I$ and goes upto some sinks in $J$. Also, for each $i\in I$, the family must contain a path that starts from the source $i$. The weight of a vertex-disjoint family of paths is the product of weights of all the paths in the family.
\end{defn} 
\begin{example}
	Let us consider the planar network in \Cref{network}. Let $I=\{1,2\}$ and $J=\{2,3\}$. There is only one path from source 1 to sink 2 with weight $dh$, let us call it by $p_1$. There are 3 paths from source 2 to sink 3 with weights $bdhi,ei,eg$. Let us call them by $p_2,p_3,p_4$ respectively. Notice that both $p_2$ and $p_3$ intersects with $p_1$, but $p_4$ does not. Hence, $\{ p_1,p_4 \}$ is the only family of vertex disjoint paths from sources $I$ to sinks $J$ with weight $degh$.
\end{example}
Let $A$ be a $n\times n$ matrix and $I,J\subseteq \{1,\cdots,n \}$ such that $|I|=|J|$. Then by $\Delta_{I,J}$, we denote the minor of $A$ which is the determinant of the submatrix of $A$ consisting of rows indexed by $I$ and columns indexed by $J$.
\begin{lem}\cite{Lind}\label{linds}
	A minor $\Delta_{I,J}$ of the weight matrix of a planar
	network is equal to the sum of weights of all collections of vertex-disjoint paths from the sources indexed by $I$ to the sinks indexed by $J$.
\end{lem}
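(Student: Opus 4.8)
The plan is to run the classical Lindstr\"om--Gessel--Viennot argument: expand the minor by the Leibniz formula, reinterpret each resulting term as a sign-weighted system of directed paths, isolate the vertex-disjoint systems (which all carry sign $+1$ by planarity), and kill all the remaining terms by a sign-reversing involution that swaps tails at a canonically chosen intersection point.

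First I would set up the bookkeeping. Write $I=\{i_1<\cdots<i_m\}$, $J=\{j_1<\cdots<j_m\}$ and abbreviate $\mathcal{W}=\mathcal{W}(\Gamma;\omega)$. The Leibniz expansion of the determinant gives
\[
\Delta_{I,J}=\sum_{\tau\in S_m}\mathrm{sgn}(\tau)\prod_{k=1}^{m}\mathcal{W}_{i_k,\,j_{\tau(k)}},
\]
and since by \Cref{wmatrix} each $\mathcal{W}_{i_k,j_{\tau(k)}}$ is the sum of weights of all directed paths from source $i_k$ to sink $j_{\tau(k)}$ (a finite sum because $\Gamma$ is acyclic), distributing the product rewrites $\Delta_{I,J}$ as a sum over all \emph{path systems} $\mathbf{P}=(P_1,\dots,P_m)$ in which $P_k$ starts at source $i_k$ and ends at some sink of $J$. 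Such a system records a permutation $\tau_{\mathbf{P}}$ of $J$, and contributes $\mathrm{sgn}(\tau_{\mathbf{P}})\prod_k\omega(P_k)$. Splitting according to whether $\mathbf{P}$ is vertex-disjoint, we get $\Delta_{I,J}=\Sigma_{\mathrm{disj}}+\Sigma_{\mathrm{int}}$.

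For $\Sigma_{\mathrm{disj}}$ I would invoke planarity. Because the sources and the sinks each lie on the boundary of the plane graph $\Gamma$ in the given linear order, any vertex-disjoint system from $I$ to $J$ must realize the order-preserving bijection $i_k\mapsto j_k$: if $P_k$ ended at $j_{\tau(k)}$ with $\tau\neq\mathrm{id}$, there would be indices $k<\ell$ with $\tau(k)>\tau(\ell)$, forcing the two arcs $P_k$ and $P_\ell$ (whose four endpoints are ``linked'' along the outer boundary) to share a vertex, contradicting disjointness. Hence $\tau_{\mathbf{P}}=\mathrm{id}$ for every disjoint system, every such term has sign $+1$, and $\Sigma_{\mathrm{disj}}$ is precisely the sum of weights of all vertex-disjoint families from $I$ to $J$ --- the right-hand side of the lemma. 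So it remains to prove $\Sigma_{\mathrm{int}}=0$.

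The main step, and the only delicate one, is the sign-reversing involution on intersecting systems. Given $\mathbf{P}$ with a repeated vertex, I would let $a$ be the least index such that $P_a$ meets another path, let $v$ be the first vertex along $P_a$ (from its source) lying on some other path, and let $b$ be the least index of a path through $v$ other than $a$; then replace $(P_a,P_b)$ by $(P_a',P_b')$, where $P_a'$ follows $P_a$ up to $v$ and then $P_b$ afterwards, and $P_b'$ follows $P_b$ up to $v$ and then $P_a$ afterwards (acyclicity of $\Gamma$ guarantees these concatenated walks are again simple paths). This operation preserves the product of edge weights and replaces $\tau_{\mathbf{P}}$ by $\tau_{\mathbf{P}}\circ(a\,b)$, flipping the sign; the substantive point is that it is an involution, because the initial segments of all paths, together with the distinguished index $a$ and vertex $v$, are untouched by the swap, so applying the construction to $\mathbf{P}'$ returns $\mathbf{P}$. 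Thus the terms of $\Sigma_{\mathrm{int}}$ cancel in pairs, $\Sigma_{\mathrm{int}}=0$, and $\Delta_{I,J}=\Sigma_{\mathrm{disj}}$ as claimed. I expect pinning down a genuinely canonical crossing --- so that the tail-swap is demonstrably reversible --- to be the part that needs the most care; everything else is routine.
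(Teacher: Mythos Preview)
Your argument is correct: it is the classical Lindstr\"om--Gessel--Viennot sign-reversing involution, and your description of the canonical crossing (least index $a$, first shared vertex $v$ along $P_a$, least index $b\neq a$ through $v$) is one of the standard choices that makes the tail-swap an honest involution. The paper does not give its own proof at all but simply refers the reader to \cite[Lemma 7]{CN} and \cite[Lemma 1]{FZ}, which carry out essentially the same argument you outline.
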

\begin{proof}
    See \cite[Lemma 7]{CN} or \cite[Lemma 1]{FZ} for the proof.
\end{proof}
\begin{cor}\label{rnngw}
	If all the weights of a planar network are real and nonnegative (positive), then its weight matrix is totally nonnegative (resp. positive).
\end{cor}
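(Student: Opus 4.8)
The plan is to deduce this at once from Lindström's Lemma (\Cref{linds}). Recall from \Cref{wmatrix} that each entry of the weight matrix is a sum of path-weights, a path-weight being a product of edge-weights, and that the weight of a vertex-disjoint family of paths is the product of the weights of its constituent paths. Consequently, if every edge weight $\omega(e)$ is a nonnegative real number, then every path-weight and every family-weight, being a finite product of nonnegative reals, is itself nonnegative.

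Next, fix $I,J\subseteq\{1,\dots,n\}$ with $|I|=|J|$. By \Cref{linds} the minor $\Delta_{I,J}$ of $\mathcal{W}(\Gamma;\omega)$ is the sum of the weights of all vertex-disjoint families of paths from the sources indexed by $I$ to the sinks indexed by $J$; this is a finite sum of nonnegative reals, hence $\Delta_{I,J}\ge 0$. Since $I$ and $J$ were arbitrary, $\mathcal{W}(\Gamma;\omega)$ is totally nonnegative in the sense of \Cref{totpos}, which proves the first assertion. If all weights are strictly positive, the same computation shows each path-weight and each family-weight is strictly positive, so $\Delta_{I,J}>0$ whenever there exists at least one vertex-disjoint family joining the prescribed sources to the prescribed sinks.

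The one point requiring care — really the only piece of content beyond Lindström's Lemma — is precisely this last existence statement in the positive case, which for a completely arbitrary planar network may fail (for instance $n$ disjoint horizontal edges give the identity weight matrix, which is not totally positive). I would therefore verify it for the networks at hand: the grid-type networks as in \Cref{network} admit, for each pair of equal-size index sets, $|I|$ pairwise vertex-disjoint paths realizing the required source-to-sink pairing, and by \Cref{prp:1} this property persists under the concatenations used to build the monodromy networks of \Cref{wpnwm}. Granting this, every minor is positive and $\mathcal{W}(\Gamma;\omega)$ is totally positive.
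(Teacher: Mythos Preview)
Your approach---deducing the result directly from Lindstr\"om's Lemma---is exactly what the paper intends; the corollary is stated without proof precisely because it is meant to be immediate from \Cref{linds}. Your argument for the nonnegative case is complete and correct.

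You are also right to flag the positive case: as stated in full generality the claim is false, and your counterexample of $n$ disjoint horizontal strands is the standard one. The paper is simply imprecise here. In practice, however, the paper never invokes the ``positive'' half of this corollary as a black box; when total positivity of $\rho(\gamma)$ is actually needed (see the lemma immediately following \Cref{tEtiE} in \Cref{neq3}), the existence of the requisite vertex-disjoint families is verified directly for the specific concatenated networks arising from the building blocks $T(t)E(e)$ and $T(t)^{-1}E(e)$. So your instinct to check connectivity for ``the networks at hand'' is exactly what the paper does---just later, and in a separate lemma rather than inside this corollary.
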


\subsection{Some linear algebra}\label{cwf}
We recall some basic linear algebra facts that we shall need later. In what follows, for a vector $x=(x_1,\cdots,x_n)\in \mathbb{R}^n$, we say $x>0$ (or $x\ge 0$) if $x_i>0$ (resp. $x_i\ge 0$) for all $i$. Moreover, a \textit{positive} (respectively, non-negative) matrix is one whose entries are all positive  (respectively, non-negative). Note that a totally-positive matrix (defined in the previous subsection) is also a positive matrix.

Recall also that the \emph{spectral radius} of a matrix $A$ is the largest eigenvalue in modulus, i.e.  $\sigma(A):= \max \{|\lambda| : \lambda \text{ is an eigenvalue of } A  \}.$

\vspace{.05in}

The following collects some basic facts about the spectral radius of positive matrices. For a proof, see for example \cite[\S 8]{Meyer}.

\begin{thm}[Perron-Frobenius]\label{PerFro} 
	Let $A$ be a positive square matrix. Then the following statements are true: 
	\begin{enumerate}
		\item $\sigma(A)>0.$
		
		\item $\sigma(A)$ is also an eigenvalue of $A$ and it is called the Perron-Frobenius eigenvalue of $A$.
		
		\item\label{posev} There exists an eigenvector $x>0$ such that $Ax=\sigma(A)x$.
\end{enumerate}
  \end{thm}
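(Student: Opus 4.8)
The plan is to prove the three assertions in the order $(3)\Rightarrow (1)\Rightarrow (2)$, since the existence of a positive dominant eigenvector is the real content and the other two follow quickly from it. First I would set up the standard compactness/fixed-point framework: let $\Delta=\{x\in\mathbb{R}^n : x\ge 0,\ \sum_i x_i=1\}$ be the standard simplex, and consider the continuous map $f:\Delta\to\Delta$ defined by $f(x)=Ax/\lVert Ax\rVert_1$, which is well-defined because $A$ positive and $x\ge 0$, $x\neq 0$ force $Ax>0$ (every entry of $Ax$ is a sum of strictly positive numbers times nonnegative numbers, not all zero). By Brouwer's fixed-point theorem $f$ has a fixed point $x_0\in\Delta$, so $Ax_0=\lambda x_0$ with $\lambda=\lVert Ax_0\rVert_1>0$; moreover $x_0=\lambda^{-1}Ax_0>0$ since $Ax_0>0$. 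This simultaneously gives a positive eigenvector and a positive eigenvalue, which already yields part of (1) and all of (3).

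Next I would show that this $\lambda$ is in fact the spectral radius $\sigma(A)$, which is the crux of (1) and (2). Suppose $\mu$ is any (complex) eigenvalue with eigenvector $z=(z_1,\dots,z_n)\neq 0$, so $\mu z_i=\sum_j a_{ij}z_j$. Taking moduli and using the triangle inequality together with $a_{ij}>0$ gives $|\mu|\,|z_i|\le \sum_j a_{ij}|z_j|$, i.e.\ $|\mu|\,|z|\le A|z|$ entrywise, where $|z|=(|z_1|,\dots,|z_n|)\ge 0$ and $|z|\neq 0$. Now I would pair this inequality against the positive eigenvector $y>0$ of the transpose $A^{T}$ (which exists by applying the already-proved part (3) to $A^{T}$, itself a positive matrix, with eigenvalue $\lambda'>0$): computing $\langle y, A|z|\rangle = \langle A^{T}y, |z|\rangle = \lambda'\langle y,|z|\rangle$ and $\langle y, |\mu|\,|z|\rangle = |\mu|\,\langle y,|z|\rangle$, the inequality $|\mu|\,|z|\le A|z|$ gives $|\mu|\,\langle y,|z|\rangle \le \lambda'\langle y,|z|\rangle$; since $y>0$ and $|z|\ge 0$, $|z|\neq 0$ we have $\langle y,|z|\rangle>0$, hence $|\mu|\le\lambda'$. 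Applying the same argument to the positive eigenvector $x_0$ of $A$ against the transpose picture shows $\lambda\le\lambda'$ and $\lambda'\le\lambda$, so $\lambda=\lambda'$, and therefore every eigenvalue satisfies $|\mu|\le\lambda$. Thus $\sigma(A)\le\lambda$; since $\lambda$ is itself an eigenvalue we get $\sigma(A)=\lambda>0$, proving (1), and $\sigma(A)=\lambda$ is an eigenvalue with a positive eigenvector, proving (2) and (3).

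The main obstacle is the step showing $\lambda=\sigma(A)$ rather than merely $\lambda\le\sigma(A)$: one must rule out a strictly larger eigenvalue in modulus, and the clean way to do this is precisely the pairing with a positive left eigenvector, so the bookkeeping to guarantee that left eigenvector exists (by symmetry of the argument applied to $A^{T}$) and that the two Perron eigenvalues of $A$ and $A^{T}$ agree is the one delicate point. An alternative, if one prefers to avoid invoking $A^{T}$, is to argue directly: from $|\mu|\,|z|\le A|z|$ one shows $|\mu|^k|z|\le A^k|z|$ for all $k$, and compares growth rates with $A^k x_0=\lambda^k x_0$ using that $|z|\le c\,x_0$ for some constant $c$ (possible since $x_0>0$), giving $|\mu|^k|z|\le c\lambda^k x_0$, which forces $|\mu|\le\lambda$ upon letting $k\to\infty$. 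Everything else — well-definedness of $f$, continuity, the sign bookkeeping — is routine. Since the problem statement permits citing \cite[\S8]{Meyer}, it would also be legitimate to simply quote the theorem, but the above gives a self-contained argument in the few lines the paper needs.
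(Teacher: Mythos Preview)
Your argument is correct: the Brouwer fixed-point construction on the simplex produces a positive eigenvector with positive eigenvalue, and either of your two arguments (pairing against a positive left eigenvector, or the growth-rate comparison $|\mu|^k|z|\le cA^kx_0=c\lambda^kx_0$) cleanly shows that this eigenvalue realizes the spectral radius. The bookkeeping about $\lambda=\lambda'$ is fine, and in fact could be shortened by noting that $A$ and $A^T$ share a characteristic polynomial, so their spectral radii automatically coincide.

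However, you should be aware that the paper does \emph{not} prove this theorem at all: it is stated as a background fact with the sentence ``For a proof, see for example \cite[\S 8]{Meyer}'' and nothing more. So there is no proof in the paper to compare against; your proposal goes well beyond what the paper actually does. If your goal is to match the paper, a one-line citation suffices; if your goal is a self-contained writeup, what you have is a standard and valid route.
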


We shall also need the following:

\begin{lem}[The Collatz-Wielandt formula] For a positive matrix $A$ we have \[ \sigma(A)=\max_{x\in \mathcal{X}}f(x) \]
		where \[ f(x)=\min_{\substack{1\le i\le n \\ x_i \neq 0}}\frac{(Ax)_i}{x_i} \text{ and } \mathcal{X}=\{x\ge 0, x\neq 0  \}. \]
\end{lem}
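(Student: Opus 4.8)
The plan is to establish the two inequalities $\max_{x\in\mathcal{X}}f(x)\ge\sigma(A)$ and $f(x)\le\sigma(A)$ for every $x\in\mathcal{X}$; together with the observation that the first is witnessed by an explicit vector, this shows the supremum is attained and equals $\sigma(A)$.

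For the lower bound I would invoke the Perron--Frobenius theorem (\Cref{PerFro}) to obtain an eigenvector $x_0>0$ with $Ax_0=\sigma(A)x_0$. Then $(Ax_0)_i/(x_0)_i=\sigma(A)$ for every $i$, so $f(x_0)=\sigma(A)$; in particular $\max_{x\in\mathcal{X}}f(x)\ge\sigma(A)$, and it remains only to check that no $x$ does better.

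For the upper bound, fix $x\in\mathcal{X}$ and set $\mu=f(x)$. The key step is the entrywise inequality $Ax\ge\mu x$: for an index $i$ with $x_i\neq 0$ this is just $\mu\le(Ax)_i/x_i$ rearranged, while for $x_i=0$ it holds because $A$ positive and $x\ge 0$ force $(Ax)_i\ge 0=\mu x_i$. Now apply Perron--Frobenius to the transpose $A^{\mathsf T}$, which is again a positive matrix with the same spectral radius (same characteristic polynomial), to get $y>0$ with $A^{\mathsf T}y=\sigma(A)y$. Pairing $Ax\ge\mu x$ with $y$ and using $\langle y,Ax\rangle=\langle A^{\mathsf T}y,x\rangle=\sigma(A)\langle y,x\rangle$ gives $\sigma(A)\langle y,x\rangle\ge\mu\langle y,x\rangle$; since $y>0$ and $x\ge 0$, $x\neq 0$, we have $\langle y,x\rangle>0$, and dividing yields $f(x)=\mu\le\sigma(A)$.

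Combining the two parts, $f(x)\le\sigma(A)=f(x_0)$ for all $x\in\mathcal{X}$, so the maximum is attained at $x_0$ and equals $\sigma(A)$. This argument is essentially routine once Perron--Frobenius is in hand; the only points that require a little care are the verification of $Ax\ge\mu x$ at coordinates where $x$ vanishes, and the remark that one must apply Perron--Frobenius to $A^{\mathsf T}$ (not $A$) to produce the pairing vector $y$. If one wishes to avoid the transpose, an alternative for the upper bound is to iterate $Ax\ge\mu x$ to $A^k x\ge\mu^k x$, reduce to the case $x>0$ by replacing $x$ with $Ax$, and compare $\lVert A^k x\rVert$ with $\mu^k$ via Gelfand's formula $\lVert A^k\rVert^{1/k}\to\sigma(A)$; the left-eigenvector route above is shorter, so I would use that.
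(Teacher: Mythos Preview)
Your proof is correct and follows essentially the same approach as the paper's: both establish $Ax\ge f(x)\,x$ entrywise, then pair against a positive left Perron eigenvector (your $y$ with $A^{\mathsf T}y=\sigma(A)y$ is exactly the paper's $p$ with $p^{\mathsf T}A=\sigma(A)p^{\mathsf T}$) to get the upper bound, and both use the positive Perron eigenvector of $A$ itself to realize equality. If anything, you are slightly more careful in checking the inequality at coordinates where $x_i=0$ and in noting that $\langle y,x\rangle>0$, points the paper leaves implicit.
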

  
\begin{proof}
	Let $\alpha=f(x)$ for some $x \in \mathcal{X}$. Since $f(x)\le \frac{(Ax)_i}{x_i}$ for all $i$, we have $0\le \alpha x \le Ax$. Let $p^T$ be a left-side eigenvector of $A$ corresponding to $\sigma(A)$. Then \[ \alpha x \le Ax \implies \alpha p^T x \le (p^T A)x = \sigma(A)p^T x \implies \alpha \le \sigma(A).  \]
	So we get $f(x)\le \sigma(A)$ for all $x\in \mathcal{X}$. Hence  $\max_{x\in \mathcal{X}}f(x) \le \sigma(A)$. Now let $q$ be an eigenvector corresponding to $\sigma(A)$. Note that $q>0$ because of \ref{posev}. Then \[ f(q)=\min_{1\le i \le n}\frac{(Aq)_i}{q_i}=\min_{1\le i \le n}\frac{(\sigma(A)q)_i}{q_i}=\sigma(A). \]
\end{proof}

\begin{rmk}
    The above formula also holds for a non-negative matrix $A$ by approximating it by positive matrices -- see \cite[\S8.3]{Meyer}.
\end{rmk}

\noindent As a consequence of the previous lemma we have the following elementary lemma which shall be crucial to our proof of domination :

\begin{lem}{\label{spctrl}}
	Let $(\Gamma, \omega)$ be a weighted planar network of order $n$ with complex weights and weight matrix $\mathcal{W}$. Let $(\Gamma, \lvert \omega\rvert)$ be the same planar network, but with each weight replaced by its modulus, and let  $\mathcal{W}^\prime$ be its weight matrix. Then the spectral radii of the two weight matrices satisfy $\sigma(\mathcal{W}) \le \sigma(\mathcal{W}^\prime)$. 
\end{lem}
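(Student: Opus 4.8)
The plan is to compare the two weight matrices entrywise in modulus and then invoke the Collatz–Wielandt characterization of the spectral radius. First, observe that by \Cref{wmatrix} each entry $\mathcal{W}_{ij}$ is a sum over all directed paths $p$ from source $i$ to sink $j$ of the product of the edge-weights along $p$, and the corresponding entry $\mathcal{W}'_{ij}$ is the same sum but with each weight replaced by its modulus. Since the modulus of a sum is at most the sum of the moduli, and the modulus of a product equals the product of the moduli, we get the entrywise bound $|\mathcal{W}_{ij}| \le \mathcal{W}'_{ij}$ for all $i,j$; note that $\mathcal{W}'$ is a non-negative matrix by \Cref{rnngw}.

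Next I would pass from $\mathcal{W}$ to a non-negative comparison matrix. The eigenvalue of $\mathcal{W}$ realizing $\sigma(\mathcal{W})$ is $\lambda$ with eigenvector $v\ne 0$, $\mathcal{W}v=\lambda v$. Taking moduli componentwise, $|\lambda|\,|v| = |\mathcal{W}v| \le |\mathcal{W}|\,|v| \le \mathcal{W}'|v|$, where $|\mathcal{W}|$ denotes the entrywise-modulus matrix and all inequalities are componentwise (using the triangle inequality in each coordinate and then the entrywise bound just established). Thus the non-negative vector $x:=|v|$, which is nonzero, satisfies $\mathcal{W}'x \ge |\lambda|\,x$ componentwise. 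Hence for every index $i$ with $x_i\neq 0$ we have $(\mathcal{W}'x)_i/x_i \ge |\lambda|$, so in the notation of the Collatz–Wielandt formula $f(x)\ge |\lambda| = \sigma(\mathcal{W})$. By that formula (applied to $\mathcal{W}'$, which is non-negative, invoking the remark that the formula persists for non-negative matrices), $\sigma(\mathcal{W}') = \max_{y\in\mathcal{X}} f(y) \ge f(x) \ge \sigma(\mathcal{W})$, which is the desired inequality.

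The only mild subtlety — and the step I would be most careful about — is that $\mathcal{W}'$ may be merely non-negative rather than strictly positive, so the Collatz–Wielandt formula and the existence of a Perron eigenvector are being used in their non-negative form; this is exactly the content of the remark following the Collatz–Wielandt lemma in the excerpt, obtained by approximating $\mathcal{W}'$ with strictly positive matrices, so no new work is needed. Everything else is the standard argument that the spectral radius is monotone under entrywise domination by a non-negative matrix, together with the observation that the planar-network structure guarantees $|\mathcal{W}|\le\mathcal{W}'$ automatically.
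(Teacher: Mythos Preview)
Your proof is correct and follows essentially the same route as the paper's: establish the entrywise bound $|\mathcal{W}_{ij}|\le \mathcal{W}'_{ij}$ via the triangle inequality, take an eigenvector for $\mathcal{W}$ and pass to its componentwise modulus, then feed this vector into the Collatz--Wielandt formula for the non-negative matrix $\mathcal{W}'$. Your write-up is in fact slightly more careful than the paper's at one point, since you restrict to indices $i$ with $x_i\neq 0$ before forming the ratios.
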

\begin{proof}
	Let  the matrices $\mathcal{W} =(w_{ij})$ and $\mathcal{W}^\prime=(\textbf{w}_{ij})$ where $1\leq i,j \leq n$, where recall that the $ij$-th entry is a sum of weights of paths from source $i$ to sink $j$. Since the weight of a path is the product of weights, any such weight is replaced by its modulus when all weights are replaced by their moduli. Hence by the triangle inequality, we have $|w_{ij}| \le \textbf{w}_{ij}$. Let $\lambda$ be an eigenvalue of $\mathcal{W}$ with some eigenvector $v=(v_i)_{1\leq i\leq n}$, such that  $\mathcal{W} v=\lambda v$. Now for each $i$ we have 
	\[ |\lambda||v_i|=|\lambda v_i|=|\sum_j w_{ij}v_j| \le \sum_j |w_{ij}||v_j|\le \sum_j \mathbf{w}_{ij}|v_j|. \]
	Let $\mathbf{v}=(\mathbf{v}_i)=(|v_i|)$ be the vector with i-th entry $|v_i|$. Then notice that the previous computation shows \[ |\lambda| \le \frac{(\mathcal{W}^\prime \mathbf{v})_i}{\mathbf{v}_i} \] for all $i$. Thus \[ |\lambda| \le \min_i \frac{(\mathcal{W}^\prime \mathbf{v})_i}{\mathbf{v}_i} \le \max_{x\in \mathcal{X}} \min_i \frac{(\mathcal{W}^\prime x)_i}{x_i} = \sigma(\mathcal{W}^\prime) \]
	by the Collatz-Wielandt formula. Since we choose $\lambda$ to be an arbitrary eigenvalue of $\mathcal{W}$, we get $\sigma(\mathcal{W}) \le \sigma(\mathcal{W}^\prime)$ as desired.
\end{proof}

\begin{rmk}
    For an alternative proof of this, using the characterization of spectral radius as a limit $\sigma(A) = \lim\limits_{k\to \infty} \lVert A^k\rVert^{1/k}$, see \cite[Theorem 8.1.18]{HJ}.
\end{rmk}

\section{Domination in the Hilbert length spectrum}\label{hilp}

In this section, we shall prove \Cref{mainthm} for the Hilbert length spectrum. We shall first prove it when $n=3$, i.e., for representations into $\psltc$, and subsequently generalize to any $n\geq 3$. 

\subsection{Domination for $\psltc$-representations}\label{neq3}
Let $\rho: \pi_1(S) \rightarrow \psltc$ be a given generic representation, where recall that $S$ is a \textit{punctured} surface throughout. Let us fix an ideal triangulation $\tau$ on $S$. Since $\rho$ is generic, we obtain a framing $\beta$  such that the Fock-Goncharov coordinates for the framed representation $(\rho, \beta)$ are well-defined, as discussed in Section 2.3. Let  $\gamma \in \pi_1(S)$ realized by a closed curve on $S$ and let $\widetilde{\gamma}$ be its lift to the universal cover. Let $\widetilde{\gamma_0}$ be an arc on $\widetilde{\gamma}$ that descends onto $\gamma$ which is a concatenation $\widetilde{\gamma_0} \sim e_1 * t_1*\cdots * e_k * t_k$, where $e_i$ and $t_i$ are edges of the monodromy graph (\textit{c.f.} \Cref{monod}). Recall that, from \cref{rhogamma}, we have
\begin{equation}\label{eq:1}
\rho(\gamma)=T(t_k)^{\delta_k}E(e_k)\cdots T(t_1)^{\delta_1}E(e_1), 
\end{equation}
where $ \delta_i \in \{\pm 1\}$. In \cref{Tt} and \cref{Ee}, we have already seen that, $T$ and $E$ matrices are of the form
 \[  E(x,y)= \sqrt[3]{\frac{1}{xy^2}} \begin{pmatrix}
	0 & 0 & 1 \\
	0 & -y & 0 \\
	x y & 0 & 0
	 \end{pmatrix}, \hspace*{0.5in}
	 T(z)= \frac{1}{\sqrt[3]{z}} \begin{pmatrix}
	 0 & 0 & 1 \\
	 0 & -1 & 1 \\
	 z & -z-1 & 1
	 \end{pmatrix} \]
where the multiplicative factors ensure that the matrices are normalized to have determinant $1$. (Throughout, we shall use this normalization to be able to think of a matrix in $\pslnc$ as a matrix in $\slnc$.) 

\noindent We shall refer to the matrices
\begin{equation}
	 T(z)E(x,y)= \sqrt[3]{\frac{1}{xy^2z}}
	 \begin{pmatrix}
	 x y &0 & 0 \\
	 x y & y & 0 \\
	 x y & y(1+z) & z
	 \end{pmatrix}\label{eq:2}
 \end{equation}
 \begin{equation}
	 T(z)^{-1}E(x,y)=\sqrt[3]{\frac{z}{xy^2}} 
	 \begin{pmatrix}
	 \frac{x y}{z} & \frac{y(1+z)}{z} & 1 \\
	 0 & y & 1 \\
	 0 & 0 & 1
	 \end{pmatrix} \label{eq:3}
 \end{equation}
	 
 as the \emph{building blocks} of $\rho(\gamma)$. We can easily check that they are the weight matrices of the planar networks in \Cref{tEtiE}. (In the figure the weights $p=\sqrt[3]{\frac{z}{xy^2}}$ and $q=\sqrt[3]{\frac{1}{xy^2z}}$.)
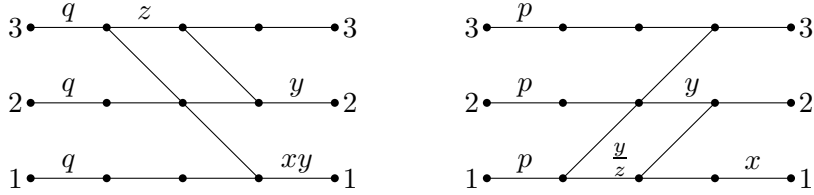
\begin{figure}[ht]
	\centering
	\begin{tikzpicture}
	\fill (0,0) circle (1.5pt) (1,0) circle (1.5pt) (2,0) circle (1.5pt) (3,0) circle (1.5pt) (4,0) circle (1.5pt) (0,1) circle (1.5pt) (1,1) circle (1.5pt) (2,1) circle (1.5pt) (3,1) circle (1.5pt) (4,1) circle (1.5pt) (0,2) circle (1.5pt) (1,2) circle (1.5pt) (2,2) circle (1.5pt) (3,2) circle (1.5pt) (4,2) circle (1.5pt);
	\draw (0,0) -- (4,0);
	\draw (0,1) -- (4,1);
	\draw (0,2) -- (4,2);
	\draw (1,2) -- (3,0);
	\draw (2,2) -- (3,1);
	\draw (-.2,0) node {$1$} (4.2,0) node {$1$} (-.2,1) node {$2$} (4.2,1) node {$2$}(-.2,2) node {$3$} (4.2,2) node {$3$} (.5,0.2) node {$q$} (.5,1.2) node {$q$} (.5,2.2) node {$q$} (1.5,2.2) node {$z$} (3.5,1.2) node {$y$} (3.5,.2) node {$xy$};
	\fill (6,0) circle (1.5pt) (7,0) circle (1.5pt) (8,0) circle (1.5pt) (9,0) circle (1.5pt) (10,0) circle (1.5pt) (6,1) circle (1.5pt) (7,1) circle (1.5pt) (8,1) circle (1.5pt) (9,1) circle (1.5pt) (10,1) circle (1.5pt) (6,2) circle (1.5pt) (7,2) circle (1.5pt) (8,2) circle (1.5pt) (9,2) circle (1.5pt) (10,2) circle (1.5pt);
	\draw (6,0) -- (10,0);
	\draw (6,1) -- (10,1);
	\draw (6,2) -- (10,2);
	\draw (7,0) -- (9,2);
	\draw (8,0) -- (9,1);
	\draw (5.8,0) node {$1$} (10.2,0) node {$1$} (5.8,1) node {$2$} (10.2,1) node {$2$} (5.8,2) node {$3$} (10.2,2) node {$3$} (6.5,0.2) node {$p$} (6.5,1.2) node {$p$} (6.5,2.2) node {$p$} (7.75,0.25) node {$\frac{y}{z}$} (9.5,0.2) node {$x$} (8.7,1.2) node {$y$};
	\end{tikzpicture}
	\caption{Planar networks with weight matrices $T(z)E(x,y)$ (left) and $T(z)^{-1}E(x,y)$ (right). Weights of the unassigned edges are 1.}
	\label{tEtiE}
\end{figure}

Concatenating these planar networks as dictated by \eqref{eq:1},  we obtain a planar network with weight matrix $\rho(\gamma)$ by \Cref{prp:1}. Note that if all the parameters are real and positive, then \Cref{rnngw} assures that the building blocks, and indeed the entire product, are totally nonnegative matrices.  In fact, we observe: 

\begin{lem}
 	The matrix $\rho(\gamma)$ is either triangular or totally positive when all the parameters are real and positive.
\end{lem}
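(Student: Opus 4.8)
The plan is to use the weighted planar network description of $\rho(\gamma)$ just established, together with Lindstr\"om's Lemma (\Cref{linds}) and its \Cref{rnngw}. First I would write $\rho(\gamma)$ as a concatenation of building blocks of the two types \eqref{eq:2} and \eqref{eq:3}, so that $\rho(\gamma)$ is the weight matrix of a single planar network $(\Gamma,\omega)$ whose edge weights are monomials in the (real, positive) Fock-Goncharov coordinates, hence themselves real and positive. By \Cref{rnngw}, every minor of $\rho(\gamma)$ is then non-negative, so it only remains to upgrade ``non-negative'' to ``positive'' away from the triangular case.

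The key step is a combinatorial dichotomy on the planar network $\Gamma$ obtained from the concatenation. Observe that in the left network of \Cref{tEtiE} (the block $T(z)E(x,y)$) all the ``crossing'' edges descend (they go from higher rows to lower rows), while in the right network (the block $T(z)^{-1}E(x,y)$) all crossing edges ascend; within a single block the weight matrix is triangular. I would argue: if every block in the concatenation is of the same type — all $TE$ or all $T^{-1}E$ — then the product is a product of matrices that are simultaneously lower-triangular (resp. upper-triangular), so $\rho(\gamma)$ is triangular, giving the first alternative. Otherwise, the network contains both a descending crossing edge and an ascending crossing edge; I would show that in that case, for \emph{every} pair $I,J \subseteq \{1,2,3\}$ with $|I|=|J|$, there is at least one vertex-disjoint family of paths from sources $I$ to sinks $J$. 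For $|I|=1$ this is clear since $\Gamma$ is connected left-to-right through the horizontal edges (all weights positive), and for $|I|=2,3$ one uses the presence of crossings in both directions to route disjoint families realizing the off-diagonal and full minors; each such family has positive weight (product of positive monomials), so by \Cref{linds} the corresponding minor is strictly positive. Hence $\rho(\gamma)$ is totally positive.

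The main obstacle I anticipate is the bookkeeping in the ``otherwise'' case: one must check that \emph{all} minors — in particular the $2\times2$ minors $\Delta_{I,J}$ for $I\ne J$ and the determinant — are realized by at least one vertex-disjoint path family once crossings of both orientations are present, and that no cancellation can occur. Cancellation is ruled out because all edge weights are positive (the cube-root normalizing factors $p,q$ are positive reals when the coordinates are), so each minor is a sum of positive terms and cannot vanish once it has a nonempty index set of contributing families; the real content is the purely topological claim that a left-to-right planar network on $3$ levels containing at least one $\searrow$ crossing and at least one $\nearrow$ crossing admits vertex-disjoint families for every $(I,J)$. I would handle this by a short case analysis on the relative position of the first ascending and first descending crossing, or alternatively by noting that such a network contains, as a minor-dominating subnetwork, one equivalent to a reduced word in which the weight matrix is manifestly totally positive (e.g.\ of the form $L\,D\,U$ with $L$ lower-unitriangular totally non-negative, $U$ upper-unitriangular totally non-negative, both with a genuine crossing, and $D$ positive diagonal). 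Finally I would note the peripheral case is not needed here: for peripheral $\gamma$ the curve can be homotoped into a single triangle-free loop and the product degenerates, but that is addressed elsewhere; the lemma as stated concerns a general $\gamma$ and both alternatives genuinely occur.
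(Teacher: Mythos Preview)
Your proposal is correct and follows essentially the same route as the paper. The only refinement the paper adds is that, instead of a general case analysis on ``first ascending vs.\ first descending crossing,'' it observes that if not all blocks are of the same type then two \emph{consecutive} blocks must be of opposite types, giving a specific sub-network $T E\,T^{-1}E$ or $T^{-1}E\,T E$ (their Figures~\ref{etet_1} and~\ref{et_1et}); the required vertex-disjoint families for every $(I,J)$ are then read off from that single concrete piece and extended horizontally through the rest of $\Gamma$, which dispatches your anticipated bookkeeping in one stroke.
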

\begin{proof}
	If the building block matrices of $\rho(\gamma)$ are all of the form $T(t)E(e)$ (or $T(t)^{-1}E(e)$), $\rho(\gamma)$ is lower (respectively upper) triangular. If not, then $\rho(\gamma)$ must contain a segment of the form $T(t_1)E(e_1)T(t_2)^{-1}E(e_2)$ or $T(t_1)^{-1}E(e_1)T(t_2)E(e_2)$. \Cref{etet_1} and \Cref{et_1et} represent the respective planar networks of these segments.
	\begin{figure}[tph]
		\centering
		\begin{tikzpicture}
		\filldraw (0,0) circle (1.5pt) (1,0) circle (1.5pt) (2,0) circle (1.5pt) (3,0) circle (1.5pt) (4,0) circle (1.5pt) (5,0) circle (1.5pt) (6,0) circle (1.5pt) (7,0) circle (1.5pt) (0,1) circle (1.5pt) (1,1) circle (1.5pt) (2,1) circle (1.5pt) (3,1) circle (1.5pt) (4,1) circle (1.5pt) (5,1) circle (1.5pt) (6,1) circle (1.5pt) (7,1) circle (1.5pt) (0,2) circle (1.5pt) (1,2) circle (1.5pt) (2,2) circle (1.5pt) (3,2) circle (1.5pt) (4,2) circle (1.5pt) (5,2) circle (1.5pt) (6,2) circle (1.5pt) (7,2) circle (1.5pt);
	 	\draw (0,0) -- (7,0);
	 	\draw (0,1) -- (7,1);
	 	\draw (0,2) -- (7,2);
	 	\draw (1,2) -- (3,0);
	 	\draw (2,2) -- (3,1);
	 	\draw (4,0) -- (6,2);
	 	\draw (5,0) -- (6,1);
 		\end{tikzpicture}
 		\caption{Planar network of $TET^{-1}E$.}
 		\label{etet_1}
 	\end{figure}
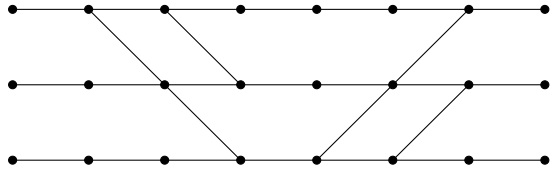 
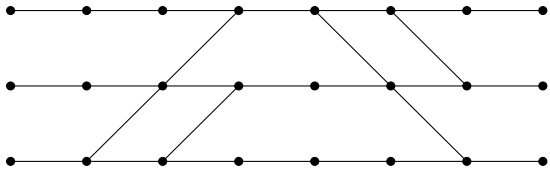
\begin{figure}[tph]
	\centering
	\begin{tikzpicture}
	\filldraw (0,0) circle (1.5pt) (1,0) circle (1.5pt) (2,0) circle (1.5pt) (3,0) circle (1.5pt) (4,0) circle (1.5pt) (5,0) circle (1.5pt) (6,0) circle (1.5pt) (7,0) circle (1.5pt) (0,1) circle (1.5pt) (1,1) circle (1.5pt) (2,1) circle (1.5pt) (3,1) circle (1.5pt) (4,1) circle (1.5pt) (5,1) circle (1.5pt) (6,1) circle (1.5pt) (7,1) circle (1.5pt) (0,2) circle (1.5pt) (1,2) circle (1.5pt) (2,2) circle (1.5pt) (3,2) circle (1.5pt) (4,2) circle (1.5pt) (5,2) circle (1.5pt) (6,2) circle (1.5pt) (7,2) circle (1.5pt);
	\draw (0,0) -- (7,0);
	\draw (0,1) -- (7,1);
	\draw (0,2) -- (7,2);
	\draw (1,0) -- (3,2);
	\draw (2,0) -- (3,1);
	\draw (4,2) -- (6,0);
	\draw (5,2) -- (6,1);
	\end{tikzpicture}
	\caption{Planar network of $T^{-1}ETE$.}
	\label{et_1et}
\end{figure}	 	
Let $I,J \subseteq \{1,2,3\}$ such that $|I|=|J|$. In the planar network with weight matrix $\rho(\gamma)$, the segment corresponding to $T(t_1)E(e_1)T(t_2)^{-1}E(e_2)$ (or $T(t_1)^{-1}E(e_1)T(t_2)E(e_2)$) clearly contains some vertex disjoint family of paths from sources $I$ to sinks $J$. For the rest of the planar network, we extend these paths through the horizontal paths in the planar network. So we get vertex disjoint paths from sources $I$ to sinks $J$ in the whole planar network. Hence, from the Lindstr\"om Lemma \ref{linds}, we can conclude that $\rho(\gamma)$ is totally positive when all the parameters are real and positive. 
\end{proof}

\vspace{.05in}

\noindent We are now in a position to define the dominating positive representation $\rho_0:\pi_1(S) \to \psltr$.

\vspace{.05in}

\begin{defn}[Defining $\rho_0$]\label{defn:rho0} Recall that the given representation $\rho:\pi_1(S) \to \psltc$ has a framing such that the resulting framed representation has well-defined Fock-Goncharov coordinates with respect to the ideal triangulation $\tau$. Then  $\rho_0:\pi_1(S) \to \psltr$ is the representation obtained by replacing each coordinate by its modulus; since each Fock-Goncharov coordinate is now real and positive, such a representation is positive (see \Cref{puncHit}). 
\end{defn}

\noindent The following then establishes the special case of Theorem \ref{mainthm} that we were aiming to prove in this subsection:

\begin{prop}\label{prop3} Let $\rho: \pi_1(S) \to \psltc$ be a (generic) representation as in the beginning of this subsection. Then the positive representation $\rho_0$ defined above dominates $\rho$ in the Hilbert length spectrum. Moreover, the lengths of the peripheral curves are unchanged. 
\end{prop}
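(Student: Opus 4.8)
The plan is to read off, for each $\gamma\in\pi_1(S)$, both $\rho(\gamma)$ and $\rho_0(\gamma)$ as weight matrices of one and the same planar network — the weights for $\rho_0(\gamma)$ being exactly the moduli of the weights for $\rho(\gamma)$ — and then to feed this into \Cref{spctrl}. The only extra ingredient is a reformulation of the Hilbert length in terms of spectral radii: writing $\lambda_1,\lambda_2,\lambda_3$ for the eigenvalues of $\rho(\gamma)\in\psltc$ in increasing order of modulus, invertibility gives $\sigma(\rho(\gamma))=|\lambda_3|$ and $\sigma(\rho(\gamma)^{-1})=1/|\lambda_1|$, so that
\[ l_\rho(\gamma)=\ln|\lambda_3|-\ln|\lambda_1|=\ln\sigma(\rho(\gamma))+\ln\sigma(\rho(\gamma)^{-1}), \]
and likewise for $\rho_0$. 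Hence it suffices to prove the two spectral-radius inequalities $\sigma(\rho(\gamma))\le\sigma(\rho_0(\gamma))$ and $\sigma(\rho(\gamma)^{-1})\le\sigma(\rho_0(\gamma)^{-1})$ and add them.

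For the first inequality I would homotope $\gamma$ onto $e_1*t_1*\cdots*e_k*t_k$ in the monodromy graph and use \eqref{eq:1} to write $\rho(\gamma)=T(t_k)^{\delta_k}E(e_k)\cdots T(t_1)^{\delta_1}E(e_1)$; grouping the factors into the building blocks \eqref{eq:2} and \eqref{eq:3}, which by \Cref{tEtiE} and \Cref{prp:1} are the weight matrices of the displayed planar networks and their concatenations, exhibits $\rho(\gamma)$ as the weight matrix $\mathcal{W}$ of a planar network $(\Gamma_\gamma,\omega)$ whose edge-weights are the functions of the Fock--Goncharov coordinates recorded in \Cref{tEtiE}. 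By \Cref{defn:rho0}, $\rho_0(\gamma)$ is given by the same product \eqref{eq:1} with every coordinate replaced by its modulus; since each weight in \Cref{tEtiE} is a monomial in the coordinates (with the cube-root normalization), this replaces every weight by its modulus, so $\rho_0(\gamma)$ is the weight matrix of $(\Gamma_\gamma,|\omega|)$ and \Cref{spctrl} yields $\sigma(\rho(\gamma))\le\sigma(\rho_0(\gamma))$. For the second inequality I would simply run the same argument for the curve $\gamma^{-1}$: since $\rho$ and $\rho_0$ are homomorphisms, $\rho(\gamma)^{-1}=\rho(\gamma^{-1})$ and $\rho_0(\gamma)^{-1}=\rho_0(\gamma^{-1})$, so the previous step applied to $\gamma^{-1}$ gives $\sigma(\rho(\gamma)^{-1})\le\sigma(\rho_0(\gamma)^{-1})$. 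Adding, $l_\rho(\gamma)\le l_{\rho_0}(\gamma)$. (As a cleaner-looking alternative to passing to $\gamma^{-1}$, one could observe that for $\psltc$ the adjugate $\rho(\gamma)^{-1}$ is the weight matrix of the second compound of $(\Gamma_\gamma,\omega)$ and apply \Cref{linds} and \Cref{spctrl} again; but invoking $\gamma^{-1}$ avoids constructing that network.)

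For a peripheral curve $\gamma$ the domination inequality already holds by the above, and it remains to upgrade it to an equality. Here I would homotope $\gamma$ to a loop that crosses the ideal triangles around a puncture always turning in the same sense; then in \eqref{eq:1} all the signs $\delta_i$ coincide, so $\rho(\gamma)$ is a product of building blocks all of type \eqref{eq:2} (or all of type \eqref{eq:3}), hence lower- (resp. upper-) triangular, and the same holds for $\rho_0(\gamma)$. The $(i,i)$-entry of $\rho(\gamma)$ is then the product over the blocks of the corresponding diagonal entries, each a monomial in the coordinates, so $(\rho_0(\gamma))_{ii}=|(\rho(\gamma))_{ii}|$ for every $i$; since the eigenvalues of a triangular matrix are its diagonal entries, the multiset of moduli of eigenvalues of $\rho_0(\gamma)$ equals that of $\rho(\gamma)$, whence $l_{\rho_0}(\gamma)=l_\rho(\gamma)$.

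The one genuinely non-formal point is the combinatorial claim that a loop around a puncture has all turning signs $\delta_i$ equal in its monodromy-graph decomposition — it meets only the $e$- and $t$-edges encircling the puncture, traversed consistently — which I expect to follow from the description of monodromy in \Cref{mmat} but will need to be written out carefully. Everything else is bookkeeping: in particular one must track the cube-root normalizations so that ``replacing each coordinate by its modulus'' really does replace each network weight by its modulus, and one should note that for non-peripheral $\gamma$ the matrices $\rho_0(\gamma)$ and $\rho_0(\gamma^{-1})$ produced above are in fact totally positive, so \Cref{spctrl} applies without any appeal to its non-negative limiting case.
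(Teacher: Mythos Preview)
Your proof is correct and follows essentially the same route as the paper's: build the planar network for $\rho(\gamma)$ from the concatenated building blocks, observe that replacing each Fock--Goncharov coordinate by its modulus replaces every weight by its modulus, apply \Cref{spctrl} to both $\gamma$ and $\gamma^{-1}$, and treat peripheral curves via the triangularity forced by all $\delta_i$ having the same sign. The one point you flag as non-formal --- that a peripheral loop has constant turning sign in its monodromy-graph decomposition --- the paper resolves simply by citing \cite[Theorem~23]{CTT}.
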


\begin{proof}
	 Let  $\gamma \in \pi_1(S)$. Let us recall from \cref{rlen} that the Hilbert length of $\gamma$ is defined as 
	\begin{equation}\label{rle}
	\ell_{\rho}(\gamma):=\ln \Bigg| \frac{\lambda_n}{\lambda_1} \Bigg|,
	\end{equation}
	where $\lambda_n$ and $\lambda_1$ are the largest and smallest eigenvalues in modulus of $\rho(\gamma)$.
 
 Now recall that $\rho(\gamma)$ is a product of matrices as  in \cref{eq:1}. Let $\rho(\gamma)$ be the weight matrix of the planar network $(\Gamma\, \omega)$ obtained by concatenating the planar networks corresponding to the building blocks as in \Cref{tEtiE}. Observe the crucial property that any weight in the resulting planar network is a product of some Fock-Goncharov coordinates (of $\rho$ with its framing) and their reciprocals. In particular, if each Fock-Goncharov coordinate is replaced by its modulus, then each weight of $(\Gamma\, \omega)$ is replaced by its modulus. Recall from \Cref{defn:rho0} that the new real and positive coordinates define the positive representation $\rho_0$. Thus, $\rho_0(\gamma)$ is the weight matrix of the resulting weighted planar network is $(\Gamma, \vert \omega \rvert)$. By \Cref{spctrl} we know that in this process the largest eigenvalue in modulus increases.
 
 Moreover, applying the same argument to $\gamma^{-1}$, we know that the largest eigenvalue of the (totally positive) matrix $\rho_0(\gamma)^{-1}$ is at least the largest eigenvalue in modulus of $\rho(\gamma)^{-1}$. Since the largest eigenvalue in modulus of a matrix $A^{-1}$ is the reciprocal of the smallest eigenvalue in modulus of $A$, we obtain that in the above process, the smallest  eigenvalue in modulus decreases.

 Together, this shows that $\ell_\rho(\gamma) \leq \ell_{\rho_0}(\gamma)$. 

 It remains to show that the length of any peripheral curve remains unchanged in the process. If $\gamma$ is a peripheral curve then from \cite[Theorem 23]{CTT} we know that in the equation \eqref{eq:1} $\delta_i=1$ (or $-1$) for all $i$. Hence $h(\gamma)$ is either upper triangular (for $\delta_i=1$) or lower triangular (for $\delta_i=-1$). Now from equations \eqref{eq:2} and \eqref{eq:3} we can clearly see that the diagonal entries of $\rho(\gamma)$ are in the form of the product of some Fock-Goncharov coordinates; recall that these diagonal entries are exactly the eigenvalues of a triangular matrix. Thus, afterwe replace each Fock-Goncharov coordinate with its  modulus to define $\rho_0$, the eigenvalues of $\rho_0(\gamma)$ are just replaced by their modulus. In other word, the largest and smallest eigenvalues in modulus remain the same, and  $\ell_{\rho}(\gamma)=\ell_{\rho_0}(\gamma)$ for the peripheral curve $\gamma$.
 \end{proof}

\begin{rmk}\label{strmk}
	Note from the above proof that if a curve $\gamma$ involves ideal triangles and edges of the monodromy graph with Fock-Goncharov coordinates already real and positive, then the Hilbert length of $\gamma$ remains unchanged, i.e.\  $\ell_{\rho}(\gamma)=\ell_{\rho_0}(\gamma)$.
\end{rmk}

\subsection{Domination for $\pslnc$}\label{genHil} 
  We shall now prove domination in the Hilbert length spectrum for general $n\geq 3$, generalizing the argument for $n=3$ that we saw in the previous subsection. We shall continue to assume that $S$ is a punctured surface, and let $\rho: \pi_1(S) \rightarrow \pslnc$ be a generic representation.  Since $\rho$ is generic, we can define a $\rho$-equivariant framing $\beta$ such that the Fock-Goncharov coordinates exists for the framed representation $(\rho, \beta)$, with respect to an ideal triangulation $\tau$ on $S$ (\textit{c.f.} the discussion in Section 2.3.)

  Let $\gamma \in \pi_1(S)$. Recall from Section 2.2 that we have 
  \begin{equation}\label{rhogammagen}
  \rho(\gamma)=T(t_k)^{\delta_k}E(e_k)\cdots T(t_1)^{\delta_1}E(e_1),
  \end{equation}
  and recall from Section 2.5 the finer product decomposition for each $T$ and $E$ matrices. In particular, from \eqref{Tt} we know that 
  \begin{equation}\label{tgen}
  T(t)=M(t)\cdot S,
  \end{equation}where $S$ is the constant matrix defined in \cref{S}, and from \cref{Mt} we have 
  \begin{equation}\label{Mgen}
  M(t)=\prod_{j=1}^{n-1}\bigg[F_{n-1}\prod_{i=1}^{n-j-1}\bigg(H_i(X_{i-1,n-i-j-1,j-1})F_{n-i-1}\bigg)\bigg],
  \end{equation}
  where $X_{a,b,c}$ are the triangle invariants of the triangle that contains $t,\ F_i$ and $H_i$ are as in \cref{F_i} and \cref{H_i} respectively. From \cref{Ee}, we also have
  \begin{equation}\label{Egen}
      E(e)= diag(1,z_{n-1},z_{n-2}z_{n-1},\cdots,z_1z_2\cdots z_{n-1})S,
  \end{equation}
  where $e$ is an $e$-type edge of the monodromy graph and $z_i$ are the edge invariants associated to $e$.
  
  Now our goal is to use this product decomposition to produce weighted planar networks whose weight matrices are the  building block matrices of $\rho(\gamma)$, i.e., the $T(t_i)^{\delta_i}E(e_i)$ matrices in \cref{rhogammagen}.

\vspace{.05in} 

  \noindent We first observe: 
  \begin{lem}\label{M}
  	The matrix $M(t)$ is the weight matrix of a weighted planar network with each  weight having the form of a  product of some  Fock-Goncharov coordinates  and their reciprocals.
  \end{lem}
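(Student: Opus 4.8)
The plan is to exploit the explicit factorization \eqref{Mgen} of $M(t)$ together with the multiplicativity of weight matrices under concatenation (\Cref{prp:1}). Since $M(t)$ is written as an ordered product of the matrices $F_i$ and $H_i(x)$, it suffices to exhibit each of these elementary factors as the weight matrix of a small weighted planar network whose edge-weights are each either $1$, a single Fock-Goncharov coordinate, or its reciprocal; the concatenation of these networks, in the order dictated by \eqref{Mgen}, then has weight matrix exactly $M(t)$, and every edge-weight in the concatenated network is of the required form. So the proof reduces to two routine model computations.

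First I would treat $F_i$, defined in \eqref{F_i}: it is the identity matrix with a single extra $1$ in the $(i+1,i)$ position. I would take the planar network on $n$ horizontal strands (sources and sinks labelled $1,\dots,n$ from bottom, say) consisting of the $n$ horizontal edges all of weight $1$, together with one additional slanted edge of weight $1$ joining strand $i$ to strand $i+1$. One checks directly from \Cref{wmatrix} that the $(a,b)$-entry of its weight matrix is $1$ if $a=b$, is $1$ if $(a,b)=(i+1,i)$ (the unique path using the slanted edge), and $0$ otherwise — precisely $F_i$. Next, for $H_i(x)=\mathrm{diag}(1,\dots,1,x,\dots,x)$ with $x$ repeated $i$ times in the last $i$ slots: I would take the network of $n$ disjoint horizontal edges, assigning weight $x$ to each of the last $i$ horizontal edges and weight $1$ to the rest; its weight matrix is diagonal with the prescribed entries. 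Here the single weight $x = X_{i-1,n-i-j-1,j-1}$ is one Fock-Goncharov coordinate (a triangle invariant), so the requirement is met. (These are exactly the building blocks used in \Cref{n3M} for $n=3$, now stated for general $n$.)

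Finally I would assemble: by \eqref{Mgen}, $M(t)$ is the product, in a fixed order, of factors each equal to some $F_{n-1}$, $F_{n-i-1}$, or $H_i(X_{\cdots})$. Concatenating the corresponding networks above in the same left-to-right order and invoking \Cref{prp:1} repeatedly (or by a trivial induction on the number of factors) shows that the resulting planar network of order $n$ has weight matrix $M(t)$. Since each constituent network has all weights equal to $1$ or to a single triangle invariant $X_{a,b,c}$, and no reciprocals even appear for $M(t)$ itself, the concatenated network has every weight a product of Fock-Goncharov coordinates and their reciprocals (in fact just coordinates), as claimed.

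I do not anticipate a serious obstacle here: the content is entirely the bookkeeping of matching the elementary matrices $F_i$, $H_i$ to elementary planar-network moves, and this is exactly parallel to the $n=3$ case already worked out in \Cref{n3M} and \Cref{tEtiE}. The only mild care needed is to fix once and for all the convention identifying strand indices with matrix row/column indices (consistent with the slanted-edge orientation conventions set in \Cref{wpnwm}), so that $F_i$ really does correspond to an edge in the $(i+1,i)$ position and not its transpose; once that convention is pinned down, each verification is a direct reading of \Cref{wmatrix}. The reciprocals mentioned in the statement are there to accommodate the subsequent incorporation of the $E$-matrix factors via \eqref{Egen}, where the normalization introduces $n$-th roots and reciprocals of the $z_i$; for $M(t)$ alone the weights are honest coordinates.
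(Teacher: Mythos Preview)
Your proposal is correct and follows essentially the same approach as the paper: exhibit each elementary factor $F_i$ and $H_i(x)$ in the product \eqref{Mgen} as the weight matrix of a small planar network (horizontal strands with a single slanted edge for $F_i$, weighted horizontal strands for $H_i(x)$), and then concatenate using \Cref{prp:1}. The paper's proof is in fact terser than yours---it simply points to a figure displaying the two elementary networks and invokes concatenation---so your added verification of the $(i+1,i)$ entry and the remark about strand-index conventions are welcome but not strictly needed.
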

  \begin{proof}
  	Notice that the matrix $M(t)$ is a product of the matrices $H_i(x)$ and $F_j$ for some $i,j$ and some triangle invariant $x$. \Cref{hf} shows the planar networks with the required property such that their weight matrices are $H_i(x)$ and $F_i$.
  	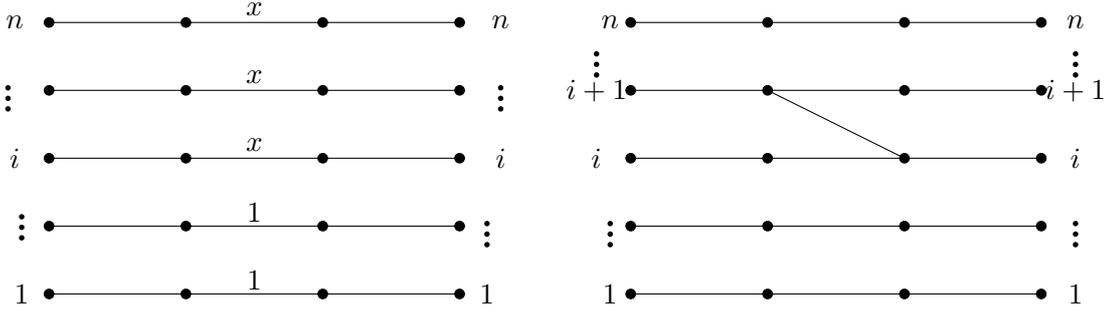
\begin{figure}[ht]
  		\centering
  		\begin{tikzpicture}[scale=0.9]
  		\filldraw (0,0) circle (2pt) (2,0) circle (2pt) (4,0) circle (2pt) (6,0) circle (2pt) (0,1) circle (2pt) (2,1) circle (2pt) (4,1) circle (2pt) (6,1) circle (2pt) (0,2) circle (2pt) (2,2) circle (2pt) (4,2) circle (2pt) (6,2) circle (2pt) (0,3) circle (2pt) (2,3) circle (2pt) (4,3) circle (2pt) (6,3) circle (2pt) (0,4) circle (2pt) (2,4) circle (2pt) (4,4) circle (2pt) (6,4) circle (2pt);
  		\draw (0,0) -- (6,0);
  		\draw (0,1) -- (6,1);
  		\draw (0,2) -- (6,2);
  		\draw (0,3) -- (6,3);
  		\draw (0,4)-- (6,4);
  		\node at (-0.4,0) {1};
  		\node at (-0.4,1.1) {\Huge\vdots};
  		\node at (-0.5,2) {$i$};
  		\node at (-0.6,3) {\Huge\vdots};
  		\node at (6.4,0) {1};
  		\node at (6.4,1) {\Huge\vdots};
  		\node at (6.6,2) {$i$};
  		\node at (6.6,3) {\Huge\vdots};
  		\node at (3,3.2) {$x$};
  		\node at (3,2.2) {$x$};
  		\node at (3,0.2) {$1$};
  		\node at (3,1.2) {$1$};
  		\node at (6.6,4) {$n$};
  		\node at (-0.5,4) {$n$};
  		\node at (3,4.2) {$x$};
  		\filldraw (8.5,0) circle (2pt) (10.5,0) circle (2pt) (12.5,0) circle (2pt) (14.5,0) circle (2pt) (8.5,1) circle (2pt) (10.5,1) circle (2pt) (12.5,1) circle (2pt) (14.5,1) circle (2pt) (8.5,2) circle (2pt) (10.5,2) circle (2pt) (12.5,2) circle (2pt) (14.5,2) circle (2pt) (8.5,3) circle (2pt) (10.5,3) circle (2pt) (12.5,3) circle (2pt) (14.5,3) circle (2pt) (8.5,4) circle (2pt) (10.5,4) circle (2pt) (12.5,4) circle (2pt) (14.5,4) circle (2pt);
  		\draw (8.5,0) -- (14.5,0);
  		\draw (8.5,1) -- (14.5,1);
  		\draw (8.5,2) -- (14.5,2);
  		\draw (8.5,3) -- (14.5,3);
  		\draw (8.5,4)-- (14.5,4);
  		\draw (10.5,3)-- (12.5,2);
  		\node at (15,1) {\Huge\vdots};
  		\node at (8.2,1) {\Huge\vdots};
  		\node at (8.2,4) {$n$};
  		\node at (15,4) {$n$};
  		\node at (8,3.5) {\Huge\vdots};
  		\node at (8,3) {$i+1$};
  		\node at (8,2) {$i$};
  		\node at (15,2) {$i$};
  		\node at (15,3) {$i+1$};
  		\node at (8.2,0) {1};
  		\node at (15,0) {1};
  		\node at (15,3.5) {\Huge\vdots};
  		\end{tikzpicture}
  		\caption[Planar networks with $H_i(x)$ and $F_i$ as weight matrices]{Planar networks with $H_i(x)$ (left) and $F_i$ (right) as weight matrices.}
  		\label{hf}
  	\end{figure} 
  	Concatenating these planar networks in proper order, we obtain the weighted planar network with the required properties such that its weight matrix is $M(t)$.	
  \end{proof}

  \vspace{.05in} 

  \noindent We also note: 
  
  \begin{lem}\label{se}
  	The matrix $S\cdot E(e)$ (upto projectivization) is the weight matrix of a planar network with weights in the form of some products of the coordinates and their reciprocals.
  \end{lem}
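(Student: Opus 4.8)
The plan is to compute $S\cdot E(e)$ from the product decomposition \eqref{Egen} and observe that it collapses to a \emph{diagonal} matrix; once that is in hand, writing down the planar network is immediate.

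First I would write $E(e)=D\cdot S$, where $D=\mathrm{diag}(1,\,z_{n-1},\,z_{n-2}z_{n-1},\,\dots,\,z_1z_2\cdots z_{n-1})$ is the diagonal factor of \eqref{Egen}, so that $S\cdot E(e)=S\,D\,S$. Since $S$ is the anti-diagonal matrix of \eqref{S}, with $S_{i,\,n+1-i}=(-1)^{i+1}$ and all other entries zero, a one-line index computation gives $S^2=(-1)^{n+1}I$ and, more to the point, $S\,D\,S=(-1)^{n+1}\,\mathrm{diag}(d_n,\,d_{n-1},\,\dots,\,d_1)$ — i.e.\ sandwiching a diagonal matrix between two copies of $S$ reverses the order of its diagonal entries, up to the global scalar $(-1)^{n+1}$. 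Reading off the reversed list of entries of $D$ then yields
\[
S\cdot E(e)=(-1)^{n+1}\,\mathrm{diag}\bigl(z_1z_2\cdots z_{n-1},\; z_2\cdots z_{n-1},\; \dots,\; z_{n-1},\; 1\bigr),
\]
and because the statement is only up to projectivization the scalar $(-1)^{n+1}$ can be discarded. I would sanity-check this against \eqref{Ee3}: for $n=3$ and $e=\{x,y\}$ it indeed gives $S\cdot E(e)=\mathrm{diag}(xy,\,y,\,1)$.

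It then remains to observe that a diagonal matrix $\mathrm{diag}(\lambda_1,\dots,\lambda_n)$ whose entries are products of the Fock--Goncharov coordinates is the weight matrix of a planar network of the desired form: take $n$ pairwise disjoint horizontal strands, the $i$-th one running from source $i$ to sink $i$, and subdivide that strand into consecutive edges carrying the successive factors of $\lambda_i$ (a single edge of weight $1$ when $\lambda_i=1$). The only directed paths in this acyclic planar graph are the $n$ strands, so by \Cref{wmatrix} its weight matrix is exactly $\mathrm{diag}(\lambda_1,\dots,\lambda_n)$; applying this with $\lambda_i=z_i z_{i+1}\cdots z_{n-1}$ completes the proof.

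There is no genuine obstacle here: the whole point is that $S$ exactly ``undoes'' the anti-diagonal swap factor built into $E(e)$, so that $S\cdot E(e)$ retains only the diagonal part of $E(e)$, in reversed order. The only things needing care are the bookkeeping of the alternating signs in $S$ — which contribute only the harmless overall sign $(-1)^{n+1}$, absorbed by projectivization — and making sure the ordering of the edge invariants $z_1,\dots,z_{n-1}$ along $e$ is taken consistently with the convention fixed in \eqref{Egen}.
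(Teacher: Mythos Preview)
Your proof is correct and follows essentially the same approach as the paper: both compute $S\cdot E(e)=S\,D\,S$ explicitly as the diagonal matrix $(-1)^{n+1}\,\mathrm{diag}(d_n,\dots,d_1)$, absorb the sign by projectivization, and then observe that a diagonal matrix with monomial entries in the coordinates is trivially the weight matrix of a planar network of horizontal strands. The only cosmetic difference is that the paper carries out the entrywise computation of $(SDS)_{ij}$ directly, whereas you phrase it as ``conjugation by $S$ reverses diagonal entries''; these are the same calculation.
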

  \begin{proof}
  	Let's say \[ E(e)=diag(d_1,d_2,\cdots, d_n)\cdot S=(a_{ij})_{n\times n}, \]
  	and $S=(s_{ij})_{n\times n}$, where 
  	\begin{equation*}
  	s_{ij} =
  	\begin{cases}
  	(-1)^{i+1} & \text{if $\ i+j=n+1$,} \\
  	0 & \text{otherwise.}
  	\end{cases}
  	\end{equation*}	
  	Then by direct multiplication, we can check that 
  	\begin{equation*}
  	a_{ij}=
  	\begin{cases}
  	(-1)^{i+1}d_i & \text{if $\ i+j=n+1$,}\\
  	0 & \text{otherwise.}
  	\end{cases}
  	\end{equation*}
  	Let $S\cdot E(e)=(b_{ij})$, where \[ b_{ij}=\sum_{k=1}^{n}s_{ik}\cdot a_{kj}=s_{i,n+1-i}\cdot a_{n+1-i,j}.\]
  	So when $i=j$,
  	\[ b_{ij}=b_{ii}=(-1)^{i+1}\cdot (-1)^{n+1-i+1}d_{n+1-i}=(-1)^{n+1}d_{n+1-i}. \]
  	If $i\neq j$ we have that $ n+1-i+j\ne n+1\implies a_{n+1-i,j}=0\implies b_{ij}=0$.
  	So we get,
  	\begin{equation}\label{SEgen}
  	\begin{split}
  	S\cdot E(e) &=diag((-1)^{n+1}d_n,(-1)^{n+1}d_{n-1},\cdots,(-1)^{n+1}d_1) \\
  	&=diag(d_n,d_{n-1},\cdots,d_1) \ \text{(upto projectivization)}\\
   &=diag(z_1z_2\cdots z_{n-1},\cdots,z_{n-2}z_{n-1},z_{n-1},1)\ \text{(from \cref{Egen})}.
  	\end{split}
  	\end{equation}
  	In \Cref{M}, we have seen that a diagonal matrix is the weight matrix of a planar network with weights its diagonal entries. Now the diagonal entries of $S\cdot E(e)$ are all in the form of a product of some Fock-Goncharov coordinates, as the same is true for the entries of $E(e)$. This proves the lemma.  
  \end{proof}

  \noindent From the above Lemmas, we have the following result for one of the building blocks:
  
  \begin{prop}\label{et}
  	The matrix $T(t)E(e)$ is the weight matrix of a weighted planar network with  each  weight being a  product of some  Fock-Goncharov coordinates  and their reciprocals. 
  \end{prop}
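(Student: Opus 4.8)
The plan is to read the statement off directly from the product decomposition $T(t)E(e)=M(t)\cdot S\cdot E(e)$, combining \Cref{M}, \Cref{se} and \Cref{prp:1}. First, recall from \eqref{tgen} that $T(t)=M(t)\cdot S$, so that
\[ T(t)E(e)=M(t)\cdot\bigl(S\cdot E(e)\bigr). \]
By \Cref{se} (see \eqref{SEgen}), the matrix $S\cdot E(e)$ equals, up to projectivization, the diagonal matrix $\mathrm{diag}(z_1z_2\cdots z_{n-1},\dots,z_{n-2}z_{n-1},z_{n-1},1)$ whose entries are products of the edge invariants attached to $e$; and, as observed in the proof of \Cref{M}, a diagonal matrix is the weight matrix of the trivial planar network of $n$ horizontal strands, the $i$-th strand carrying the $i$-th diagonal entry as its weight. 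Hence $S\cdot E(e)$ is the weight matrix of a planar network all of whose weights are products of Fock-Goncharov coordinates.

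Next, by \Cref{M}, $M(t)$ is the weight matrix of a weighted planar network $(\Gamma_1,\omega_1)$ in which every weight is a product of some Fock-Goncharov coordinates and their reciprocals. Let $(\Gamma_2,\omega_2)$ be the planar network for $S\cdot E(e)$ just described. Concatenating $(\Gamma_1,\omega_1)$ with $(\Gamma_2,\omega_2)$, in that order, yields a planar network $(\Gamma,\omega)$, and by \Cref{prp:1} its weight matrix is the product $M(t)\cdot\bigl(S\cdot E(e)\bigr)=T(t)E(e)$. Since the edge set of $(\Gamma,\omega)$ is the disjoint union of those of $(\Gamma_1,\omega_1)$ and $(\Gamma_2,\omega_2)$, every weight of $(\Gamma,\omega)$ is a product of some Fock-Goncharov coordinates and their reciprocals, which is precisely the assertion.

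I expect no genuine obstacle here: the substance of the proposition has already been distilled into \Cref{M} and \Cref{se}, and what remains is the bookkeeping of the concatenation. The only mild point to handle is the projectivization ambiguity in $S\cdot E(e)$; this is harmless because all matrices are regarded as elements of $\pslnc$, and if one wishes to keep the $\slnc$-normalization one notes that the overall scalar $(-1)^{n+1}$ appearing in \eqref{SEgen} contributes $\bigl((-1)^{n+1}\bigr)^n=1$ to the determinant, so the normalization is unaffected.
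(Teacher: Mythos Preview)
Your proof is correct and follows essentially the same approach as the paper: decompose $T(t)E(e)=M(t)\cdot\bigl(S\cdot E(e)\bigr)$ via \eqref{tgen}, invoke \Cref{M} and \Cref{se} for the two factors, and concatenate using \Cref{prp:1}. Your added remark on the projectivization scalar is a welcome clarification but not needed for the argument.
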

  \begin{proof}
  	From \cref{tgen}, we obtain
  	\begin{equation}\label{TtEe}
  	T(t)E(e)=M(t)\cdot S\cdot E(e).
  	\end{equation} 
  	
 Hence concatenating the weighted planar networks (see \Cref{prp:1}) of \Cref{M} and \Cref{se}, we obtain a weighted planar network  with weight matrix $T(t)E(e)$.
  \end{proof}
  
  \noindent We shall now describe the planar network of $T(t)E(e)$ more explicitly. To do so, we decompose the matrix $M(t)$ as 
  \begin{equation}\label{Stn}
  M(t)=\prod_{j=1}^{n-1}\bigg[St(n-j)(t) \bigg]=St(n-1)(t)\cdot St(n-2)(t)\cdots\cdots St(1)(t),
  \end{equation}
  where $St(k)(t)$ is the part of the product in \eqref{Mgen} corresponding to the outer index $j=n-k$. For instance, $St(1)(t)= F_{n-1}$ and $St(2)(t)=F_{n-1}H_1(t)F_{n-2}$. From the construction of the matrix $M(t)$, we know that each of these $St(k)(t)$ matrices essentially maps the snake $AC$ a step closer to the snake $AB$.
  \begin{figure}[ht]
  	\centering
  	\begin{tikzpicture}[scale=0.6]
  	\draw (0,0)-- (2.31,4)-- (4.62,0)-- (0,0);
  	\draw (1.15,0)--(2.89,3) --(1.73,3) --(3.46,0) --(4.04,1)--(0.58,1) --(1.15,0);
  	\draw (1.15,2)-- (3.46,2)-- (2.31,0)-- (1.15,2);
  	\draw[green, line width=0.4mm] (0,0)-- (2.31,4);
  	\draw (5,2.5) node {$\longrightarrow$} (5,3) node {$St(1)$} (-0.4,0) node {$A$} (2.31,4.4) node {$C$} (5,0) node {$B$};
  	\draw (6,0)-- (8.31,4)-- (10.62,0)-- (6,0);
  	\draw (7.15,0)--(8.89,3) --(7.73,3) --(9.46,0) --(10.04,1)--(6.58,1) --(7.15,0);
  	\draw (7.15,2)-- (9.46,2)-- (8.31,0)-- (7.15,2);
  	\draw[green, line width=0.4mm] (6,0)-- (7.73,3)-- (8.89,3);
  	\draw (11,2.5) node {$\longrightarrow$} (11,3) node {$St(2)$};
  	\draw (12,0)-- (14.31,4)-- (16.62,0)-- (12,0);
  	\draw (13.15,0)--(14.89,3) --(13.73,3) --(15.46,0) --(16.04,1)--(12.58,1) --(13.15,0);
  	\draw (13.15,2)-- (15.46,2)-- (14.31,0)-- (13.15,2);
  	\draw[green, line width=0.4mm] (12,0)-- (13.15,2)-- (15.46,2);
  	\end{tikzpicture}   	 	
  	\caption{$St(1)$ and $St(2)$ respectively in $M(t)$.}
  	\label{St12}
  \end{figure}
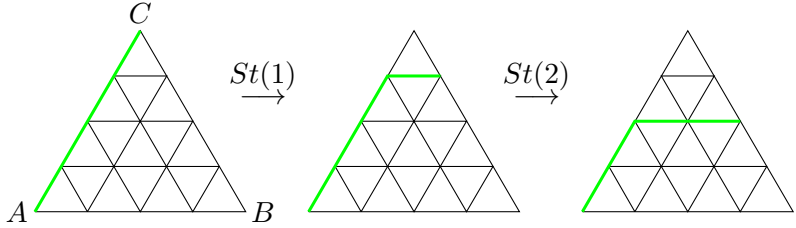
  In \Cref{St12}, the change of snakes for $St(1)$ and $St(2)$ have been shown. Also notice that, only those triangle invariants are involved in $St(k)(t)$ which are associated in that particular snake-change. For example, there are no triangle invariants associated in $St(1)$, so it is a constant matrix. The following proposition talks about these $St(k)(t)$ matrices, and in the proof, we also describe how these arise as weight matrices of certain weighted planar networks.
  \begin{prop}\label{stk}
  	The matrix $St(k)(t)$ can be written as 
  	\[ St(k)(t)=
  	\begin{pmatrix}
  	I_{n-k \times n-k} & 0_{n-k \times k} \\
  	A_{k \times n-k} & B_{k \times k}
  	\end{pmatrix},
  	\]
  	where $A=(a_{ij})_{k \times n-k}$ is given by
  	\begin{equation*}
  	a_{ij}=
  	\begin{cases}
  	1 & \text{if $j=n-k$,} \\
  	0 & \text{otherwise}
  	\end{cases}
  	\end{equation*}
  	and 
  	\[ B_{k\times k}=
  	\begin{pmatrix}
  	1 & 0&0&0&\cdots &0\\
  	1 & x_{k-1} & 0&0&\cdots &0\\
  	0&x_{k-1}&x_{k-1}x_{k-2} & 0&\cdots &0\\
  	\cdots &\cdots &\cdots &\cdots &\cdots &\cdots \\
  	0&\cdots &\cdots &0& x_{k-1}x_{k-2}\cdots x_2&x_{k-1}x_{k-2}\cdots x_1
  	\end{pmatrix}.
  	\]
  \end{prop}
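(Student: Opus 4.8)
The plan is to prove the asserted block form by induction on $k$, exploiting the fact that the product defining $St(k)(t)$ contains the product defining $St(k-1)(t)$ as an initial segment. Unwinding \eqref{Mgen} at the outer index $j=n-k$ gives
\[
St(k)(t)=F_{n-1}\prod_{i=1}^{k-1}\bigl(H_i(x_i)\,F_{n-i-1}\bigr),
\]
where I abbreviate the triangle invariant $X_{i-1,\,k-i-1,\,n-k-1}$ by $x_i$. The first $k-2$ of the factors $H_i(x_i)F_{n-i-1}$ here agree formally with those of $St(k-1)(t)$ (only the triangle invariants are relabelled, which is irrelevant to the assertion), so one obtains the telescoping identity
\[
St(k)(t)=St(k-1)(t)\cdot H_{k-1}(x_{k-1})\cdot F_{n-k}.
\]
The base case $k=1$ is immediate: $St(1)(t)=F_{n-1}=\left(\begin{smallmatrix}I_{n-1}&0\\ A&B\end{smallmatrix}\right)$ with $A=(0,\dots,0,1)$ and $B=(1)$, matching the statement.

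For the inductive step I assume $St(k-1)(t)=\left(\begin{smallmatrix}I_{n-k+1}&0\\ A'&B'\end{smallmatrix}\right)$ has the asserted form, with blocks of sizes $n-k+1$ and $k-1$, and I track the effect of the two extra factors in turn. Right-multiplying by $H_{k-1}(x_{k-1})=\operatorname{diag}(1,\dots,1,\underbrace{x_{k-1},\dots,x_{k-1}}_{k-1})$ rescales exactly the last $k-1$ columns; since the top-right block is zero, the only change is $B'\mapsto x_{k-1}B'$. Right-multiplying by $F_{n-k}$ then adds column $n-k+1$ to column $n-k$: in $St(k-1)(t)H_{k-1}(x_{k-1})$ the column $n-k+1$ has a $1$ in row $n-k+1$ (from $I_{n-k+1}$) and $1$'s in rows $n-k+2,\dots,n$ (the last column of $A'$) and zeros above, while column $n-k$ equals $e_{n-k}$; hence after the addition the new column $n-k$ carries $1$'s in rows $n-k,\dots,n$ and zeros above, and every other column is untouched. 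Re-blocking the resulting $n\times n$ matrix with block sizes $n-k$ and $k$ then reads off the top-left block $I_{n-k}$, the zero top-right block, the new bottom-left block $A$ of size $k\times(n-k)$ (last column all $1$'s, the rest $0$), and the new bottom-right block $B$ of size $k\times k$, whose entries come from the relation $B_{m,s}=x_{k-1}B'_{m-1,s-1}$ for $m,s\ge 2$ together with the column-addition step for $s=1$; feeding in the inductive description of $B'$ produces the banded matrix displayed in the statement.

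The one place where I would take care is that the block decomposition shifts by one at every step, from $(n-k+1,\,k-1)$ to $(n-k,\,k)$, so that it is precisely the new factor $F_{n-k}$ that creates the extra (first) row and column of the bottom block; keeping the indices straight there, and checking that the factors $H_i(x_i)F_{n-i-1}$ of $St(k-1)(t)$ really do match the initial segment of those of $St(k)(t)$, is the only non-mechanical content — everything else is a product of a diagonal matrix with an elementary matrix. (Alternatively one could argue geometrically, reading $St(k)(t)$ directly off the snake-move it implements as in \Cref{St12}, but the inductive computation above is cleaner and self-contained.) As a by-product, concatenating the planar networks of \Cref{hf} in the order dictated by this factorization and invoking \Cref{prp:1} exhibits the explicit weighted planar network whose weight matrix is $St(k)(t)$, with every edge weight a monomial in the Fock--Goncharov coordinates; this is the form in which the proposition will be used in what follows.
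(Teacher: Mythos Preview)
Your inductive argument is sound and takes a genuinely different route from the paper's. The paper does not induct on $k$: it writes out the factorization \eqref{stkMat}, draws the elementary planar network for each factor $F_{n-1},H_1(x_1),F_{n-2},\dots,H_{k-1}(x_{k-1}),F_{n-k}$ separately, concatenates them into a single network, and then simply invokes \Cref{prp:1} to read off the weight matrix. Your approach is more algebraic and yields the clean recursion
\[
B=\begin{pmatrix}1&0\\ \mathbf 1_{k-1}&x_{k-1}B'\end{pmatrix},
\]
which is arguably more transparent; the paper's approach has the advantage of simultaneously producing the explicit planar network that is used downstream.

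One point to tighten: the first column of your new $B$ does \emph{not} come from the column-addition step (that step modifies column $n-k$, which lands in the $A$-block after re-blocking). It is rather the untouched column $n-k+1$, namely the last column of $I_{n-k+1}$ stacked on the last column of $A'$, hence $(1,1,\dots,1)^T$. Feeding this into your recursion gives $B_{ms}=x_{k-1}x_{k-2}\cdots x_{k-s+1}$ for every $m\ge s$ and $0$ otherwise --- a full lower-triangular matrix with constant columns below the diagonal, not the bidiagonal display in the statement. (For $k=3$, $n=4$ a direct multiplication gives $B=\left(\begin{smallmatrix}1&0&0\\1&x_2&0\\1&x_2&x_2x_1\end{smallmatrix}\right)$, and the paper's own concatenated network confirms this: there is a weight-$1$ path from source $n-k+3$ down to sink $n-k+1$.) So the display of $B$ in the proposition contains a typo; your inductive computation, carried through correctly, proves the corrected form, and this is also what is actually needed in \Cref{et} and Figure~\ref{TE}.
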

  \begin{proof}
  	From \cref{Mgen}, we get 
  	\begin{align}\label{stkMat}
  	St(k)(t) &= F_{n-1}\prod_{i=1}^{k-1}\bigg(H_i(X_{i-1,k-i-1,n-k-1})F_{n-i-1}\bigg) \nonumber \\
  	&= F_{n-1}H_1(x_1)F_{n-2}H_2(x_2)\cdots\cdots H_{k-1}(x_{k-1})F_{n-k}
  	\end{align}
  	where $x_i=X_{i-1,k-i-1,n-k-1}$. In \Cref{St1Con}, we see planar networks such that their weight matrices are the matrices in \cref{stkMat} in that order.
  	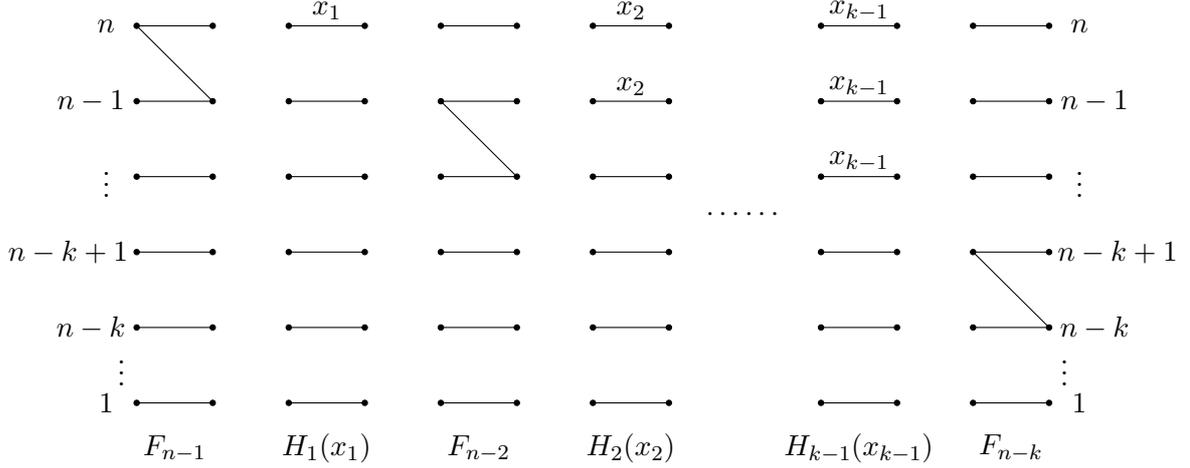
\begin{figure}[ht]
  		\centering
  		\begin{tikzpicture}
  		\draw (0,0)--(1,0);
  		\draw (0,1)--(1,1);
  		\draw (0,2)--(1,2);
  		\draw (0,3)--(1,3);
  		\draw (0,4)--(1,4)--(0,5);
  		\draw (0,5)--(1,5);
  		\filldraw (0,0) circle (1pt) (1,0) circle (1pt) (0,1) circle (1pt) (1,1) circle (1pt) (0,2) circle (1pt) (1,2) circle (1pt) (0,3) circle (1pt) (1,3) circle (1pt) (0,4) circle (1pt) (1,4) circle (1pt) (0,5) circle (1pt) (1,5) circle (1pt);
  		\draw (-.4,0) node {$1$};
  		\draw (-.4,5) node {$n$};
  		\draw (-.6,4) node {$n-1$};
  		\draw (-.6,1) node {$n-k$};
  		\draw (-.9,2) node {$n-k+1$};
  		\draw (-.2,0.5) node {$\vdots$};
  		\draw (-.4,3) node {$\vdots$};
  		\draw (12.4,0) node {$1$};
  		\draw (12.4,5) node {$n$};
  		\draw (12.6,4) node {$n-1$};
  		\draw (12.6,1) node {$n-k$};
  		\draw (12.9,2) node {$n-k+1$};
  		\draw (12.2,0.5) node {$\vdots$};
  		\draw (12.4,3) node {$\vdots$};
  		\draw (0.5,-0.6) node {$F_{n-1}$} (2.5,-0.6) node {$H_1(x_1)$} (4.5,-0.6) node {$F_{n-2}$} (6.5,-0.6) node {$H_2(x_2)$} (9.5,-0.6) node {$H_{k-1}(x_{k-1})$} (11.5,-0.6) node {$F_{n-k}$};
  		\draw (2,0)--(3,0);
  		\draw (2,1)--(3,1);
  		\draw (2,2)--(3,2);
  		\draw (2,3)--(3,3);
  		\draw (2,4)--(3,4);
  		\draw (2,5)--(3,5);
  		\filldraw (2,0) circle (1pt) (3,0) circle (1pt) (2,1) circle (1pt) (3,1) circle (1pt) (2,2) circle (1pt) (3,2) circle (1pt) (2,3) circle (1pt) (3,3) circle (1pt) (2,4) circle (1pt) (3,4) circle (1pt) (2,5) circle (1pt) (3,5) circle (1pt);
  		\draw (2.5,5.2) node {$x_1$};
  		\draw (4,0)--(5,0);
  		\draw (4,1)--(5,1);
  		\draw (4,2)--(5,2);
  		\draw (4,3)--(5,3)--(4,4);
  		\draw (4,4)--(5,4);
  		\draw (4,5)--(5,5);
  		\filldraw (4,0) circle (1pt) (5,0) circle (1pt) (4,1) circle (1pt) (5,1) circle (1pt) (4,2) circle (1pt) (5,2) circle (1pt) (4,3) circle (1pt) (5,3) circle (1pt) (4,4) circle (1pt) (5,4) circle (1pt) (4,5) circle (1pt) (5,5) circle (1pt);
  		\draw (6,0)--(7,0);
  		\draw (6,1)--(7,1);
  		\draw (6,2)--(7,2);
  		\draw (6,3)--(7,3);
  		\draw (6,4)--(7,4);
  		\draw (6,5)--(7,5);
  		\filldraw (6,0) circle (1pt) (7,0) circle (1pt) (6,1) circle (1pt) (7,1) circle (1pt) (6,2) circle (1pt) (7,2) circle (1pt) (6,3) circle (1pt) (7,3) circle (1pt) (6,4) circle (1pt) (7,4) circle (1pt) (6,5) circle (1pt) (7,5) circle (1pt);
  		\draw (6.5,5.2) node {$x_2$} (6.5,4.2) node {$x_2$};
  		\draw (8,2.5) node {$\cdots\cdots$};
  		\draw (9,0)--(10,0);
  		\draw (9,1)--(10,1);
  		\draw (9,2)--(10,2);
  		\draw (9,3)--(10,3);
  		\draw (9,4)--(10,4);
  		\draw (9,5)--(10,5);
  		\filldraw (9,0) circle (1pt) (10,0) circle (1pt) (9,1) circle (1pt) (10,1) circle (1pt) (9,2) circle (1pt) (10,2) circle (1pt) (9,3) circle (1pt) (10,3) circle (1pt) (9,4) circle (1pt) (10,4) circle (1pt) (9,5) circle (1pt) (10,5) circle (1pt);
  		\draw (9.5,5.2) node {$x_{k-1}$} (9.5,4.2) node {$x_{k-1}$} (9.5,3.2) node {$x_{k-1}$};
  		\draw (11,0)--(12,0);
  		\draw (11,1)--(12,1)--(11,2);
  		\draw (11,2)--(12,2);
  		\draw (11,3)--(12,3);
  		\draw (11,4)--(12,4);
  		\draw (11,5)--(12,5);
  		\filldraw (11,0) circle (1pt) (12,0) circle (1pt) (11,1) circle (1pt) (12,1) circle (1pt) (11,2) circle (1pt) (12,2) circle (1pt) (11,3) circle (1pt) (12,3) circle (1pt) (11,4) circle (1pt) (12,4) circle (1pt) (11,5) circle (1pt) (12,5) circle (1pt);
  		\end{tikzpicture}
  		\caption{Planar networks of the matrices in \cref{stkMat} in their respective order.}
  		\label{St1Con}
  	\end{figure}
  	It can be noted that, concatenating each network in \Cref{St1Con}, we obtain the weighted planar network in \Cref{concatSt}.

  	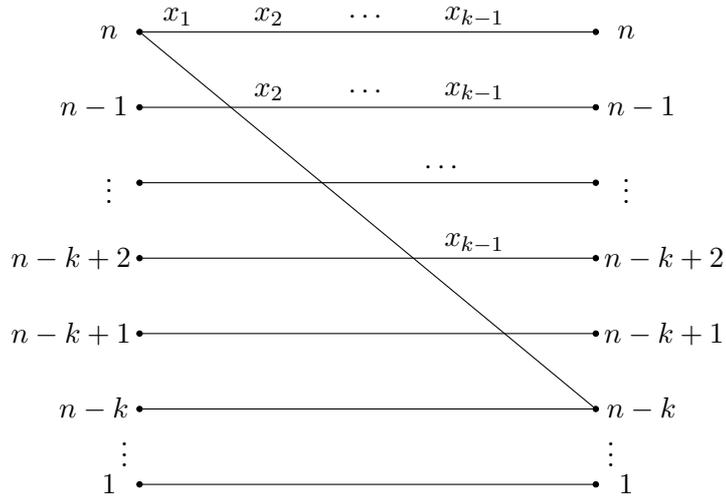
\begin{figure}[ht]
  		\centering
  		\begin{tikzpicture}
  		\draw (0,0)--(6,0) -- (0,5);
  		\draw (0,-1)--(6,-1);
  		\draw (0,1)--(6,1);
  		\draw (0,2)--(6,2);
  		\draw (0,3)--(6,3);
  		\draw (0,4)--(6,4);
  		\draw (0,5)--(6,5);
  		\filldraw (0,0) circle (1pt) (6,0) circle (1pt) (0,-1) circle (1pt) (6,-1) circle (1pt) (0,1) circle (1pt) (6,1) circle (1pt)(0,2) circle (1pt) (6,2) circle (1pt) (0,3) circle (1pt) (6,3) circle (1pt) (0,4) circle (1pt) (6,4) circle (1pt) (0,5) circle (1pt) (6,5) circle (1pt);
  		\draw (-.4,-1) node {$1$};
  		\draw (-.4,5) node {$n$};
  		\draw (-.6,4) node {$n-1$};
  		\draw (-.6,0) node {$n-k$};
  		\draw (-.9,1) node {$n-k+1$};
  		\draw (-.9,2) node {$n-k+2$};
  		\draw (-.2,-0.5) node {$\vdots$};
  		\draw (-.4,3) node {$\vdots$};
  		\draw (6.4,-1) node {$1$};
  		\draw (6.4,5) node {$n$};
  		\draw (6.6,4) node {$n-1$};
  		\draw (6.6,0) node {$n-k$};
  		\draw (6.9,1) node {$n-k+1$};
  		\draw (6.9,2) node {$n-k+2$};
  		\draw (6.2,-0.5) node {$\vdots$};
  		\draw (6.4,3) node {$\vdots$};
  		\draw (0.5,5.2) node {$x_1$} (1.7,5.2) node {$x_2$} (1.7,4.2) node {$x_2$} (3,5.2) node {$\cdots$} (3,4.2) node {$\cdots$} (4,3.2) node {$\cdots$} (4.4,2.2) node {$x_{k-1}$} (4.4,4.2) node {$x_{k-1}$} (4.4,5.2) node {$x_{k-1}$};
  		\end{tikzpicture}
  		\caption{Concatenated planar network of $St(k)(t)$ with weight matrix  \cref{stkMat}.}
  		\label{concatSt}
  	\end{figure}
  	By \Cref{prp:1} the weight matrix of the planar network in \Cref{concatSt} is exactly as claimed in the proposition.
  \end{proof}
  Now using \cref{SEgen}, proof of \Cref{stk} and the \cref{stkMat}, we construct the planar network of $T(t)E(e)$ from \cref{TtEe} in \Cref{TE},
  \begin{figure}[ht]
  	\centering
  	\begin{tikzpicture}
  	\draw (0,0)-- (12,0);
  	\draw (0,1)-- (12,1);
  	\draw (0,2)-- (12,2);
  	\draw (0,3)-- (12,3);
  	\draw (0,4)-- (12,4);
  	\draw (0,5)-- (12,5);
  	\draw (0,5)-- (3,0);
  	\draw (10,5)-- (10.6,4);
  	\draw (8,5)-- (9.2,3);
  	\draw (6,5)-- (7.8,2);
  	\draw (3,5)-- (5.4,1);
  	\fill [color=black] (0,0) circle (1.5pt) (12,0) circle (1.5pt) (0,1) circle (1.5pt) (12,1) circle (1.5pt) (0,2) circle (1.5pt) (12,2) circle (1.5pt) (0,3) circle (1.5pt) (12,3) circle (1.5pt) (0,4) circle (1.5pt) (12,5) circle (1.5pt) (0,5) circle (1.5pt) (12,4) circle (1.5pt) (8,5) circle (1.5pt) (6,5) circle (1.5pt) (3,5) circle (1.5pt) (5.4,1) circle (1.5pt) (7.8,2) circle (1.5pt) (9.2,3) circle (1.5pt) (10.6,4) circle (1.5pt) (10,5) circle (1.5pt) (3,0) circle (1.5pt);
  	\draw (-.4,0) node {$1$};
  	\draw (-.4,5) node {$n$};
  	\draw (-.6,4) node {$n-1$};
  	\draw (-.4,1) node {$2$};
  	\draw (-.4,2) node {$\vdots$};
  	\draw (-.4,3) node {$\vdots$};
  	\draw (1.5,5.2) node {$x_{1_1} \cdots x_{1_{n-2}}$};
  	\draw (1.8,4.2) node {$x_{1_2}\cdots x_{1_{n-2}}$};
  	\draw (2.5,3.2) node {$\cdots$};
  	\draw (3,2.2) node {$x_{1_{n-2}}$};
  	\draw (4.5,5.2) node {$x_{2_1} \cdots x_{2_{n-3}}$};
  	\draw (4.8,4.2) node {$x_{2_2} \cdots x_{2_{n-3}}$};
  	\draw (5.5,3.2) node {$\cdots$};
  	\draw (7,5.2) node {$\cdots$};
  	\draw (7.5,4.2) node {$\cdots$};
  	\draw (9,5.2) node {$x_{n-2}$} (11.2,5.2) node {$1$} (11.2,4.2) node {$z_{n-1}$} (11,0.2) node {$z_1\cdots z_{n-1}$} (11,1.2) node {$z_2\cdots z_{n-1}$} (11.1,2.2) node {$\cdots$} (11.2,3.2) node {$\cdots$};
  	\draw (12.4,0) node {$1$};
  	\draw (12.4,5) node {$n$};
  	\draw (12.6,4) node {$n-1$};
  	\draw (12.4,1) node {$2$};
  	\draw (12.4,2) node {$\vdots$};
  	\draw (12.4,3) node {$\vdots$};
  	\end{tikzpicture}
  	\caption{Planar network of $T(t)E(e)$.}
  	\label{TE}
  \end{figure}
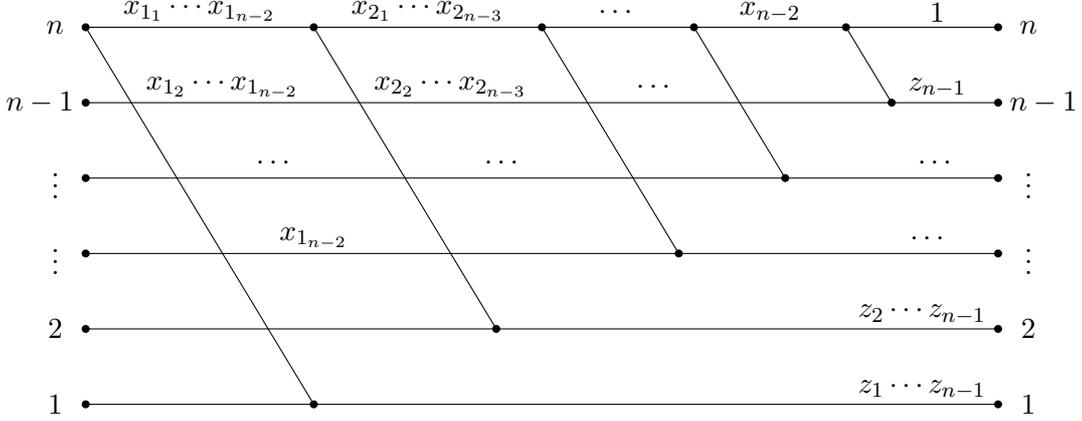
  where $x_{i_j}$ are triangle invariants for all $i,j$ and $z_k$ are edge invariants for all $k$. Note that for all $j,\ x_{i_j}$ are the triangle invariants associated to $St(n-i)$.

  \vspace{.1in}
  
  We shall now produce a weighted planar network corresponding to  the other building block $T(t)^{-1}E(e)$. 
  Since \[ T(t)^{-1}E(e)=S^{-1}M(t)^{-1}E(e) \]
  this is different from the previous case we have to construct a weighted planar network whose weight matrix is the \textit{inverse} matrix $M(t)^{-1}$. 
  This shall require more work since this could have \textit{negative} entries, in particular, entries that are $-1$ (see \eqref{Minv} and \eqref{fk}).  Recall that in our proof, we would like the weights of the planar networks to have the form of a product of the Fock-Goncharov coordinates and their reciprocals, but $-1$ is not in that form. In what follows we shall adapt the idea above of breaking $M(t)$ into ``steps" and use an inductive  procedure to construct a weighted planar network for $T(t)^{-1}E(e)$.

  \vspace{.05in}

  \noindent We start with the observation:

  \begin{clm}\label{SS}
  		$S\cdot S=I_n$, the identity matrix when $n$ is odd and $-I_n$ when $n$ is even.
  	\end{clm}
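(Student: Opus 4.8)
The plan is to write down the entries of $S$ explicitly and compute the product $S\cdot S$ by hand; this is a short index computation, and the only thing requiring attention is keeping track of the alternating signs.

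First I would record, reading off \eqref{S}, that the $(i,j)$-entry of $S$ is $s_{ij} = (-1)^{i+1}$ when $i+j=n+1$ and $s_{ij}=0$ otherwise -- the same description already used in the proof of \Cref{se}. In particular every row and every column of $S$ has exactly one nonzero entry, so $S$ is the reversal permutation matrix up to signs; concretely $S = DJ$, where $J$ is the antidiagonal matrix with all antidiagonal entries equal to $1$ and $D = \mathrm{diag}(1,-1,1,\dots,(-1)^{n+1})$.

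Next I would compute $(S^2)_{ij} = \sum_{k=1}^{n} s_{ik}s_{kj}$. Since $s_{ik}=0$ unless $k=n+1-i$, the sum reduces to the single term $s_{i,\,n+1-i}\,s_{n+1-i,\,j}$, whose second factor vanishes unless $(n+1-i)+j=n+1$, i.e.\ unless $j=i$. Hence $S^2$ is diagonal, and its $i$-th diagonal entry is
\[
(S^2)_{ii} = s_{i,\,n+1-i}\,s_{n+1-i,\,i} = (-1)^{i+1}(-1)^{(n+1-i)+1} = (-1)^{n+3} = (-1)^{n+1}.
\]
Equivalently, in the $S=DJ$ notation, $S^2 = (DJD)J = (-1)^{n+1}J^2 = (-1)^{n+1}I_n$, using that $DJD = (-1)^{n+1}J$ and $J^2 = I_n$.

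Finally I would split into cases on the parity of $n$: if $n$ is odd then $(-1)^{n+1}=1$ and $S^2 = I_n$, while if $n$ is even then $(-1)^{n+1}=-1$ and $S^2 = -I_n$, which is the claim. Since the argument is a two-line computation, there is no genuine obstacle here; the one place to be careful is correctly reading the signs $(-1)^{i+1}$ off the antidiagonal of $S$ as displayed in \eqref{S}.
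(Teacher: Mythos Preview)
Your proof is correct and is essentially the same computation as the paper's: both write $s_{ij}=(-1)^{i+1}$ for $i+j=n+1$ and $0$ otherwise, reduce the sum $\sum_k s_{ik}s_{kj}$ to the single term with $k=n+1-i$, and read off $(-1)^{n+1}$ on the diagonal. Your $S=DJ$ aside is a harmless extra remark, not a genuinely different route.
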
 	 
  	\begin{proof}
  		Let $S=(s_{ij})_{n\times n}$ where 
  		\begin{equation*}
  		s_{ij} =
  		\begin{cases}
  		(-1)^{i+1} & \text{iff $\ i+j=n+1$,} \\
  		0 & \text{otherwise.}
  		\end{cases}
  		\end{equation*}	
  		Let $S\cdot S=(a_{ij})_{n\times n}\cdot (b_{ij})_{n\times n}=(c_{ij})_{n\times n}$. Then \[ c_{ij}=\sum_{p=1}^{n}a_{ip}b_{pj}=a_{i,n+1-i}\cdot b_{n+1-i,j}. \]
  		So when $i=j$, \[ c_{ij}=(-1)^{i+1}\cdot (-1)^{n+1-i+1}=(-1)^{n+3}=(-1)^{n+1}, \]
  		and for $i\neq j$, \[ c_{ij}=0 \ \ \text{   since } b_{n+1-i,j}=0. \]
  	\end{proof}	

	\noindent Hence upto projectivization, we can write
  	\begin{equation}\label{tinve}
  	T(t)^{-1}E(e)=S\cdot M(t)^{-1}\cdot S\cdot S\cdot E(e).
  	\end{equation}
  	From \Cref{se}, we know that $S\cdot E(e)$ has a planar network with the required properties. We shall show that $S\cdot M(t)^{-1}\cdot S$ also has a planar network with the required properties.

   \vspace{.05in}
   
  	Let $\Delta ABC$ be the lift of the triangle that contains the lift of the base point of $\gamma$ (see \Cref{stp1}). From the construction of the matrix $M(t)$ we know that $M(t)^{-1}$ maps the snake $AB$ to $AC$. We can easily check that 
  	\begin{equation}\label{Minv}
  	M(t)^{-1}= \prod_{j=n-1}^{1}\bigg[\prod_{i=n-j-1}^{1}\bigg(f_{n-i-1}h_i(X_{i-1,n-i-j-1,j-1})\bigg)f_{n-1}\bigg],
  	\end{equation}
  	where $h_k(x)=H_k(x)^{-1}=diag(1,1,\cdots,1,\underbrace{x^{-1},\cdots, x^{-1}}_{k})$ and 
  	\begin{equation}\label{fk} f_k=F_k^{-1}=\begin{pmatrix}
  	1 & 0 & & \cdots &  & 0 \\
  	0 & \ddots   \\
  	\vdots & \ddots & 1 & 0 & & \vdots \\
  	& & -1 & 1 & \\
  	\vdots & & & \ddots & \ddots & 0 \\
  	0 & \cdots & & & 0 & 1 
  	\end{pmatrix} 
  	\ \
  	\begin{matrix}\text{(i.e. 1 on the diagonals and the only}\\ \text{ non-diagonal -1 at $(k+1,k)$-th position).} \end{matrix}
  	\end{equation}

    \noindent We decompose the matrix $M(t)^{-1}$ as a product of $(n-1)$ matrices in the following way:
  	\begin{equation}\label{Mtinv-decomp} M(t)^{-1}= Step(n-1)(t)\cdot Step(n-2)(t)\cdots Step1(t)
   \end{equation}
  	where $Step(k)(t)$ is the part of the product in the expression \eqref{Minv} corresponding to the index $j=n-k$, i.e.,
  	\begin{equation}\label{stpk}
  	Step(k)(t)= \prod_{i=k-1}^{1}\bigg(f_{n-i-1}h_i(X_{i-1,k-i-1,n-k-1})\bigg)f_{n-1}.
  	\end{equation}
  	For instance, \[ Step(n-1)(t)=f_{n-1};\ Step(n-2)(t)=f_{n-2}h_1(x)f_{n-1}\] for some $x$ etc. In \Cref{stp1}, the change of snakes for $Step1$ and $Step2$ matrices have been shown.
  	\begin{figure}[ht]
  		\centering
  		\begin{tikzpicture}[scale=0.6]
  		\draw[smooth,samples=100,domain=2.3094010767585034:4.618802153517007] plot(\x,{0-sqrt(3)*(\x)+8});
  		\draw[smooth,samples=100,domain=0.0:2.3094010767585034] plot(\x,{sqrt(3)*(\x)});
  		\draw[smooth,samples=100,domain=0.0:4.618802153517007] plot(\x,{0});
  		\draw[smooth,samples=100,domain=0.5773502691896258:4.041451884327381] plot(\x,{1});
  		\draw[smooth,samples=100,domain=1.1547005383792517:3.4641016151377544] plot(\x,{2});
  		\draw[smooth,samples=100,domain=1.7320508075688772:2.886751345948129] plot(\x,{3});
  		\draw[smooth,samples=100,domain=1.1547005383792517:2.88675] plot(\x,{sqrt(3)*(\x)-2});
  		\draw[smooth,samples=100,domain=2.3094:3.4641016151377544] plot(\x,{sqrt(3)*(\x)-4});
  		\draw[smooth,samples=100,domain=3.4641016151377544:4.041451884327381] plot(\x,{sqrt(3)*(\x)-6});
  		\draw[smooth,samples=100,domain=1.73205:3.4641016151377544] plot(\x,{0-sqrt(3)*(\x)+6});
  		\draw[smooth,samples=100,domain=1.1547005383792517:2.3094010767585034] plot(\x,{0-sqrt(3)*(\x)+4});
  		\draw[smooth,samples=100,domain=0.57735:1.1547005383792517] plot(\x,{0-sqrt(3)*(\x)+2});
  		
  		\draw (-0.5,0) node {$A$};
  		\draw (2.5,4.5) node {$C$};
  		\draw (5,0) node {$B$};
  		\draw (0,0)-- (0.58,1);
  		\draw (0.58,1)-- (4.04,1);
  		\draw [line width=1.6pt,color=green] (6.58,1)-- (10.04,1);
  		\draw (6,0)-- (8.31,4);
  		\draw (8.31,4)-- (10.62,0);
  		\draw (6,0)-- (10.62,0);
  		\draw (6.58,1)-- (7.15,0);
  		\draw (7.15,2)-- (8.31,0);
  		\draw (7.73,3)-- (9.46,0);
  		\draw (8.89,3)-- (7.15,0);
  		\draw (9.46,2)-- (8.31,0);
  		\draw (10.04,1)-- (9.46,0);
  		\draw (7.15,2)-- (9.46,2);
  		\draw (7.73,3)-- (8.89,3);
  		\draw (12,0)-- (14.31,4);
  		\draw (12,0)-- (16.62,0);
  		\draw (14.31,4)-- (16.62,0);
  		\draw (12.58,1)-- (16.04,1);
  		\draw (13.15,2)-- (15.46,2);
  		\draw (13.73,3)-- (14.89,3);
  		\draw (12.58,1)-- (13.15,0);
  		\draw (13.15,2)-- (14.31,0);
  		\draw (13.73,3)-- (15.46,0);
  		\draw (14.89,3)-- (13.15,0);
  		\draw (15.46,2)-- (14.31,0);
  		\draw (16.04,1)-- (15.46,0);
  		\draw [line width=1.6pt,color=green] (6,0)-- (6.58,1);
  		\draw [line width=1.6pt,color=green] (12,0)-- (13.15,2);
  		\draw [line width=1.6pt,color=green] (13.15,2)-- (15.46,2);
  		\draw [line width=1.6pt,color=green] (0,0)-- (4.62,0);
  		\draw (5.4,2.6) node {$Step1$};
  		\draw (5.32,2) node {$\longrightarrow$};
  		\draw (11.2,2.6) node {$Step2$};
  		\draw (11.16,2) node {$\longrightarrow$};
  		\end{tikzpicture}   	 	
  		\caption{$Step1$ and $Step2$ in $M(t)^{-1}$.}
  		\label{stp1}
  	\end{figure} 
  	\begin{rmk}
  		Note that the snake moves in opposite direction for the matrix $Step(i)$ defined in \cref{stpk} to that of the matrix $St(j)$ defined in \Cref{Stn}. So, 
  		\[ Step(n-k)(t)\cdot St(k)(t) = Id_n \] for all $1\le k\le (n-1)$.
  	\end{rmk}
  	\noindent Using the Claim \ref{SS}, we can write 
  	\begin{equation}\label{eq:sms}
  	S\cdot M(t)^{-1} \cdot S= S\cdot Step(n-1)(t)\cdot S \cdot S \cdot Step(n-2)(t)\cdot S \cdots S \cdot Step1(t) \cdot S,
  	\end{equation}
  	upto projectivization. Now we shall produce planar networks for all the matrices $S\cdot Step(k)(t) \cdot S$ with the required properties and concatenate them in order to get a planar network for the matrix $S\cdot M(t)^{-1} \cdot S$.

  	\begin{lem}\label{step1}
  		The matrix $\text{Step}1(t)$ is given as \[ \text{Step}1(t)= \begin{pmatrix}
  		1 & 0&0&0& \cdots&0 \\
  		-1 & 1&0&0& \cdots & 0 \\
  		0& -x_1^{-1} & x_1^{-1} &0& \cdots &0 \\
  		0&0& -x_1^{-1} x_2^{-1} & x_1^{-1}x_2^{-1} & \cdots &0 \\
  		\cdots& \cdots& \cdots& \cdots& \cdots& \cdots& \\
  		0&\cdots & \cdots &0&-x_1^{-1}x_2^{-1}\cdots x_{n-2}^{-1} &x_1^{-1}x_2^{-1}\cdots x_{n-2}^{-1}
  		\end{pmatrix} \]
  		for some triangle invariants $x_1,\ x_2\ \cdots x_{n-2}$.
  	\end{lem}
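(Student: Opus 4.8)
The plan is to obtain $\text{Step}1(t)$ by a direct computation starting from its description as a product of the elementary matrices $f_k$ and $h_i$. First I would unwind the definitions: $\text{Step}1(t)$ is the rightmost factor in the decomposition \eqref{Mtinv-decomp} of $M(t)^{-1}$, namely the block of \eqref{Minv} indexed by $j=1$; equivalently, combining the remark that $Step(n-k)(t)\cdot St(k)(t)=\mathrm{Id}_n$ with the product expansion \eqref{Stn} of $M(t)$, one has $\text{Step}1(t)=St(n-1)(t)^{-1}$. Inverting the expansion \eqref{stkMat} of $St(n-1)(t)$ then gives
\[
\text{Step}1(t)=f_{1}\,h_{n-2}(x_{n-2})\,f_{2}\,h_{n-3}(x_{n-3})\cdots f_{n-2}\,h_{1}(x_{1})\,f_{n-1},
\]
a product of the matrices $f_{k}=F_{k}^{-1}$ of \eqref{fk} (lower unitriangular, with its only off-diagonal entry equal to $-1$, in position $(k+1,k)$) and the diagonal matrices $h_{i}(x)=\mathrm{diag}(1,\dots,1,\underbrace{x^{-1},\dots,x^{-1}}_{i})$, where the $x_{i}$ are, in some order, the triangle invariants $X_{i-1,\,n-i-2,\,0}$ of the triangle containing $t$.

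The second step is to evaluate this product, which I would do by left-multiplying onto $f_{n-1}$ one factor at a time: each $h_{i}(x_{i})$ rescales the bottom $i$ rows by $x_{i}^{-1}$, while each $f_{k}$ adds $-1$ times the $k$-th row to the $(k+1)$-st, which — given the bidiagonal shape already built up — simply shifts a subdiagonal $-1$ one row higher. A short induction on the number of factors (equivalently on $n$) then shows that every partial product is lower bidiagonal and that the final matrix has diagonal entries $1,1,x_{1}^{-1},x_{1}^{-1}x_{2}^{-1},\dots,x_{1}^{-1}\cdots x_{n-2}^{-1}$, with the subdiagonal entry in each row equal to the negative of that row's diagonal entry, which is exactly the displayed matrix. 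Alternatively, and with less index chasing, one may instead verify directly that the matrix in the statement is a right inverse of the lower-triangular matrix $St(n-1)(t)$ furnished by Proposition \ref{stk}, whose diagonal is immediately readable; since a square matrix with a right inverse is invertible, this identifies $\text{Step}1(t)$.

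I expect the only genuine difficulty to be bookkeeping: keeping straight the order in which the $f_{k}$'s and $h_{i}$'s occur, and hence which triangle invariant ends up in which row. This is mitigated by the fact that the lemma asserts the form of $\text{Step}1(t)$ only up to relabeling of the invariants (``for some triangle invariants $x_{1},\dots,x_{n-2}$''), so no canonical matching of the $x_{i}$ with specific $X_{a,b,c}$ is needed, which removes the most delicate part of the computation.
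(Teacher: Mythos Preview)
Your primary approach is correct and is essentially the paper's proof: both evaluate the product $f_{1}\,h_{n-2}\,f_{2}\,h_{n-3}\cdots f_{n-2}\,h_{1}\,f_{n-1}$ by induction on $n$, the paper organising the inductive step as a block-matrix identity (each $(k{+}1)$-dimensional factor written as a $2\times 2$ block with its $k$-dimensional counterpart in the upper-left corner, so that $^{(k+1)}\mathrm{Step}1$ inherits $^{k}\mathrm{Step}1$ as a block) rather than tracking elementary row operations one at a time as you describe. One small imprecision: left-multiplication by $f_{k}$ does not ``shift'' an existing subdiagonal $-1$ but creates a new one at position $(k{+}1,k)$, since at that stage rows $k$ and $k{+}1$ are still standard; this does not affect the argument.

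Your alternative route --- verifying directly that the displayed bidiagonal matrix is a right inverse of the explicit $St(n-1)(t)$ furnished by Proposition~\ref{stk} --- is not in the paper and is arguably the cleaner of the two, since it replaces the inductive computation by a single (bidiagonal)$\times$(lower-triangular) multiplication. The trade-off is that it leans on Proposition~\ref{stk} having already done the structural work, whereas the paper's inductive proof is self-contained.
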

  	\begin{proof}
  		Let us introduce the notation of pre-superscript to denote the dimension of a square matrix from now on, namely $^k\!M$ will denote that $M$ is an $k\times k$ matrix. We shall use induction on $n$ to prove the claim. For $n=3$, 
  		$$^3\text{Step1}(t)=^3\!\!f_1 \cdot ^3\!\!h_1(x_1)\cdot ^3\!\!f_2=\begin{pmatrix}
  		1 & 0& 0\\
  		-1 & 1& 0 \\
  		0& -x_1^{-1}& x_1^{-1}
  		\end{pmatrix}$$
        
        which is already in the desired form. Now suppose it is true for $n=k$. Notice that, we can write $h_i$ and $f_i$ as block matrices in the following way:
  		\[ ^{(k+1)}\!h_{i}(x)= \begin{pmatrix}
  		^k\!h_{i-1}(x) & 0_{k\times 1} \\
  		0_{1\times k} & x^{-1}
  		\end{pmatrix} \text{  and  } ^{(k+1)}\!\!f_j = \begin{pmatrix}
  		^k\!f_j & 0_{k\times 1} \\
  		0_{1\times k} & 1
  		\end{pmatrix} \] for $j<k$, where $^{(k+1)}\!h_{i}(x)$ and $^{(k+1)}\!\!f_j$ are $(k+1) \times (k+1)$ matrix as in \cref{Minv}. \\
        
    \noindent Then from \cref{stpk}, we obtain, for $n=k+1$,
        \begin{equation*}
            \begin{split}
                &^{(k+1)}\!\text{Step}1(t) = ^{(k+1)}\!\!f_1 \cdot ^{(k+1)}\!\!h_{k-1}(x_1) \cdot ^{(k+1)}\!\!f_2 \cdot ^{(k+1)}\!\!h_{k-2}(x_2) \cdots   ^{(k+1)}\!\!f_{k-1}\cdot ^{(k+1)}\!\!h_1(x_{k-1})\cdot ^{(k+1)}\!\!f_k \\
                &=\begin{pmatrix}
  		^k\!f_1 & 0_{k\times 1} \\
  		0_{1\times k} & 1
  		\end{pmatrix}
  		\begin{pmatrix}
  		^k\!h_{k-2}(x_1) & 0_{k\times 1} \\
  		0_{1\times k} & x_1^{-1}
  		\end{pmatrix}
  		\begin{pmatrix}
  		^k\!f_2 & 0_{k\times 1} \\
  		0_{1\times k} & 1
  		\end{pmatrix}
  		\begin{pmatrix}
  		^k\!h_{k-3}(x_2) & 0_{k\times 1} \\
  		0_{1\times k} & x_2^{-1}
  		\end{pmatrix}
  		\cdots \cdots \\
        &\cdots \cdots
  		\begin{pmatrix}
  		^k\!f_{k-1} & 0_{k\times 1} \\
  		0_{1\times k} & 1
  		\end{pmatrix}
  		\begin{pmatrix}
  		^k\!h_0(x_{k-1}) & 0_{k\times 1} \\
  		0_{1\times k} & x_{k-1}^{-1}
  		\end{pmatrix}
  		\begin{pmatrix}
  		^k\!I & 0_{k\times 1} \\
  		(0,0,\cdots , 0,-1)_{1\times k} & 1
  		\end{pmatrix} \\
            \end{split}
        \end{equation*}
  		\begin{equation*}
  		\begin{split}
  		&=\begin{pmatrix}
  		^k\!f_1 \cdot ^k\!h_{k-2}(x_1) & 0_{k\times 1} \\
  		0_{1 \times k} & x_1^{-1}
  		\end{pmatrix}
  		\begin{pmatrix}
  		^k\!f_2 & 0_{k\times 1} \\
  		0_{1\times k} & 1
  		\end{pmatrix}
  		\begin{pmatrix}
  		^k\!h_{k-3}(x_2) & 0_{k\times 1} \\
  		0_{1\times k} & x_2^{-1}
  		\end{pmatrix}
  		\cdots \cdots \\
  		&\cdots \cdots
  		\begin{pmatrix}
  		^k\!f_{k-1} & 0_{k\times 1} \\
  		0_{1\times k} & 1
  		\end{pmatrix}
  		\begin{pmatrix}
  		^k\!I & 0_{k\times 1} \\
  		0_{1\times k} & x_{k-1}^{-1}
  		\end{pmatrix}
  		\begin{pmatrix}
  		^k\!I & 0_{k\times 1} \\
  		(0,0,\cdots , 0,-1)_{1\times k} & 1
  		\end{pmatrix} \\
  		&=\begin{pmatrix}
  		^k\!f_1 \cdot ^k\!h_{k-2}(x_1)\cdot ^k\!f_2\cdot ^k\!h_{k-3}(x_2) & 0_{k\times 1} \\
  		0_{1\times k} & x_1^{-1}x_2^{-1}
  		\end{pmatrix} \cdots 
  		\begin{pmatrix}
  		^k\!f_{k-1} & 0_{k\times 1} \\
  		0_{1\times k} & x_{k-1}^{-1}
  		\end{pmatrix}
  		\begin{pmatrix}
  		^k\!I & 0_{k\times 1} \\
  		(0,0,\cdots , 0,-1)_{1\times k} & 1
  		\end{pmatrix} \\
  		&= \begin{pmatrix}
  		^k\!f_1 \cdot ^k\!h_{k-2}(x_1)\cdots ^k\!f_{k-1} & 0_{k\times 1} \\
  		(0,0,\cdots , 0,-x_1^{-1}x_2^{-1}\cdots x_{k-1}^{-1})_{1\times k} & x_1^{-1}x_2^{-1}\cdots x_{k-1}^{-1}
  		\end{pmatrix}\\
  		&=\begin{pmatrix}
  		1 & 0&0&0& \cdots&0 \\
  		-1 & 1&0&0& \cdots & 0 \\
  		0& -x_1^{-1} & x_1^{-1} &0& \cdots &0 \\
  		0&0& -x_1^{-1} x_2^{-1} & x_1^{-1}x_2^{-1} & \cdots &0 \\
  		\cdots& \cdots& \cdots& \cdots& \cdots& \cdots& \\
  		0&\cdots & \cdots &0&-x_1^{-1}x_2^{-1}\cdots x_{k-1}^{-1} &x_1^{-1}x_2^{-1}\cdots x_{k-1}^{-1}
  		\end{pmatrix}
  		\begin{pmatrix}
  		\text{by} \\
  		\text{induction}\\
  		\text{hypothesis}
  		\end{pmatrix}.
  		\end{split}
  		\end{equation*} 
  	\end{proof}
  	\begin{lem} \label{sss}
  		The matrix $S \cdot \text{Step}1(t)\cdot S$ is given as 
  		\[S \cdot \text{Step}1(t)\cdot S=\begin{pmatrix}
  		x_1^{-1}x_2^{-1}\cdots x_{k-1}^{-1} & x_1^{-1}x_2^{-1}\cdots x_{k-1}^{-1} & 0 & \cdots & \cdots & 0 \\
  		\cdots & \cdots & \cdots & \cdots & \cdots & \cdots \\
  		0&\cdots &x_1^{-1}x_2^{-1}&x_1^{-1}x_2^{-1}&0&0\\
  		0& \cdots & 0 & x_1^{-1} & x_1^{-1}& 0\\
  		0 & \cdots & \cdots &0&1& 1\\
  		0 & \cdots & \cdots &0&0& 1
  		\end{pmatrix}\] 
  		upto projectivization. It is the weight matrix of the planar network in \Cref{sstep1s}.
  		
  		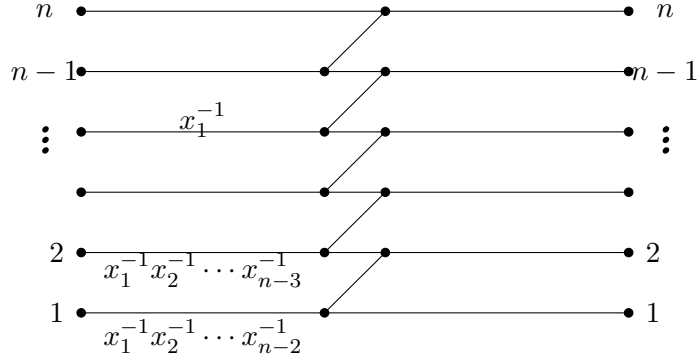
\begin{figure}[ht]
  			\centering
  			\begin{tikzpicture}[scale=0.8]
  			\filldraw (0,0) circle (2pt) (4,0) circle (2pt) (9,0) circle (2pt) (0,1) circle (2pt)  (4,1) circle (2pt) (5,1) circle (2pt) (4,2) circle (2pt) (5,2) circle (2pt) (4,3) circle (2pt) (5,3) circle (2pt) (4,4) circle (2pt) (5,4) circle (2pt) (5,5) circle (2pt) (0,2) circle (2pt) (0,3) circle (2pt) (0,4) circle (2pt) (0,5) circle (2pt) (9,1) circle (2pt) (9,2) circle (2pt) (9,3) circle (2pt) (9,4) circle (2pt) (9,5) circle (2pt);
  			\draw (0,0)-- (9,0);
  			\draw (0,1)-- (9,1);
  			\draw (0,2)-- (9,2);
  			\draw (0,3)-- (9,3);
  			\draw (0,4)-- (9,4);
  			\draw (0,5)-- (9,5);
  			\draw (4,0)-- (5,1);
  			\draw (4,1)-- (5,2);
  			\draw (4,3)-- (5,4);
  			\draw (4,2)-- (5,3);
  			\draw (4,4)-- (5,5);
  			\node at (-.4,0) {$1$};
  			\node at (9.4,0) {$1$};
  			\node at (-.4,1) {$2$};
  			\node at (9.4,1) {$2$};
  			\node at (-.6,5) {$n$};
  			\node at (9.6,5) {$n$};
  			\node at (-.6,4) {$n-1$};
  			\node at (9.6,4) {$n-1$};
  			\node at (9.6,3) {\Huge\vdots};
  			\node at (-.6,3) {\Huge\vdots};
  			\node at (2,-.4) {$x_1^{-1}x_2^{-1}\cdots x_{n-2}^{-1}$};
  			\node at (2,.7) {$x_1^{-1}x_2^{-1}\cdots x_{n-3}^{-1}$};
  			\node at (2,3.2) {$x_1^{-1}$};
  			\end{tikzpicture}
  			\caption{The planar network with weight matrix $S \cdot \text{Step}1(t)\cdot S$.}
  			\label{sstep1s}
  		\end{figure}
  	\end{lem}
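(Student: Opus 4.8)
The plan is to obtain $S\cdot\text{Step}1(t)\cdot S$ by a direct matrix computation from the explicit form of $\text{Step}1(t)$ furnished by \Cref{step1}, and then to read off that the resulting matrix is the weight matrix of the network in \Cref{sstep1s}. First I would record that, by \Cref{step1}, $\text{Step}1(t)$ is lower bidiagonal, carrying the entry $\prod_{l=1}^{p-2}x_l^{-1}$ in position $(p,p)$ and the negative $-\prod_{l=1}^{p-2}x_l^{-1}$ in position $(p,p-1)$ (for $2\le p\le n$, with empty products equal to $1$), all other entries being zero. Next, since $S$ from \eqref{S} is the signed antidiagonal matrix with $s_{i,n+1-i}=(-1)^{i+1}$, left multiplication by $S$ reverses the row order with a sign and right multiplication reverses the column order with a sign; explicitly, for any $M=(m_{pq})$ one has $(SMS)_{ij}=(-1)^{i+1}(-1)^{n-j}\,m_{\,n+1-i,\,n+1-j}$. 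Substituting $M=\text{Step}1(t)$, the right-hand side is nonzero only when $n+1-j\in\{n+1-i,\,n-i\}$, i.e.\ when $j\in\{i,i+1\}$, and in both cases its value is $(-1)^{n+1}\prod_{l=1}^{n-1-i}x_l^{-1}$: for $j=i$ this comes directly from the diagonal entry, while for $j=i+1$ the extra minus sign of the subdiagonal entry is cancelled by the parity change in $(-1)^{n-j}$.

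Consequently $S\cdot\text{Step}1(t)\cdot S$ equals $(-1)^{n+1}$ times the upper bidiagonal matrix whose $i$-th row carries $\prod_{l=1}^{n-1-i}x_l^{-1}$ in columns $i$ and $i+1$ (and in column $n$ only when $i=n$, where this product is empty and equals $1$). Dropping the global factor $(-1)^{n+1}$ — which is legitimate up to projectivization — this is exactly the matrix displayed in the statement. (Alternatively one can avoid indices: by \Cref{SS}, conjugation by $S$ sends the product of elementary matrices $f_j,h_i$ constituting $\text{Step}1(t)$ to the reversed product of their $S$-conjugates, each again elementary but of the transposed shape, and reassembling these yields the same upper bidiagonal form.)

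Finally I would check the planar-network assertion. The network in \Cref{sstep1s} consists of $n$ horizontal rails, the $i$-th carrying the single assigned weight $\prod_{l=1}^{n-1-i}x_l^{-1}$ (so the top two rails have weight $1$), together with $n-1$ upward slanted edges of weight $1$, the $i$-th running from rail $i$ to rail $i+1$ over one common horizontal interval. Since the network is acyclic, every slanted edge points upward, and no two slanted edges can be traversed consecutively, a directed path from source $i$ either stays on rail $i$ and ends at sink $i$ (total weight $\prod_{l=1}^{n-1-i}x_l^{-1}$) or takes the single available slant and ends at sink $i+1$ (same total weight), and reaches no other sink. By \Cref{wmatrix} the $(i,j)$-entry of the weight matrix is therefore $\prod_{l=1}^{n-1-i}x_l^{-1}$ when $j\in\{i,i+1\}$ and $0$ otherwise, which agrees with $S\cdot\text{Step}1(t)\cdot S$ up to projectivization, as claimed. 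The only step needing genuine care — and the reason for isolating this lemma — is the sign bookkeeping above: one must confirm that conjugation by $S$ turns the $-1$'s on the subdiagonal of $\text{Step}1(t)$ (which are \emph{not} of the permitted ``product of coordinates'' form) into $+1$'s once the residual sign $(-1)^{n+1}$ is absorbed by projectivization, which is exactly what gives the weights in \Cref{sstep1s} the required form and lets the inductive construction of a planar network for $T(t)^{-1}E(e)$ proceed without any genuine negative weights.
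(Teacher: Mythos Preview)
Your proof is correct and follows essentially the same strategy as the paper: compute $S\cdot\text{Step}1(t)\cdot S$ directly from the explicit form of $\text{Step}1(t)$ in \Cref{step1}, track the signs, and then identify the result with the weight matrix of the displayed network. The only organizational difference is that the paper factors $S=S_1S_0=S_0S_2$ (with $S_0$ the unsigned reversal and $S_1,S_2$ diagonal sign matrices), first computing $S_2\cdot\text{Step}1(t)\cdot S_1$ to strip the minus signs and then conjugating by $S_0$ to reverse rows and columns, whereas you collapse this into the single index formula $(SMS)_{ij}=(-1)^{i+1}(-1)^{n-j}m_{n+1-i,\,n+1-j}$; your version is a bit more streamlined but the content is identical.
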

  	It can be observed that the rows and columns of this matrix are in reversed order as that of $\text{Step}1(t)$ without the minus signs.
  	\begin{proof}
  		To prove the lemma, we observe that
  		\begin{itemize}
  			\item If \[S_0= \begin{pmatrix}
  			0 & \cdots & \cdots & 0 & 1 \\
  			\vdots & & \reflectbox{$\ddots$} & 1 & 0\\
  			\vdots & \reflectbox{$\ddots$} & \reflectbox{$\ddots$} & \reflectbox{$\ddots$} & \vdots \\
  			0 & \reflectbox{$\ddots$} & \reflectbox{$\ddots$} & & \vdots \\
  			1 & 0 & \cdots & \cdots & 0
  			\end{pmatrix}, \]
  			and $S_1= diag(1,-1,1, \cdots,(-1)^{n+1})$ are two square matrices of order $n$. Then \[S_1\cdot S_0=S,\] i.e., \vspace*{-3pt}
  		\end{itemize}
  		\[ \begin{bmatrix}
  		(-1)^2 & 0 & \cdots & \cdots & 0 \\
  		0 & (-1)^3 & \ddots & & \vdots \\
  		\vdots & \ddots & \ddots & \ddots & \vdots \\
  		\vdots & & \ddots & \ddots & 0 \\
  		0 & \cdots & \cdots & 0 & (-1)^{n+1}
  		\end{bmatrix}
  		\begin{bmatrix}
  		0 & \cdots & \cdots & 0 & 1 \\
  		\vdots & & \reflectbox{$\ddots$} & 1 & 0\\
  		\vdots & \reflectbox{$\ddots$} & \reflectbox{$\ddots$} & \reflectbox{$\ddots$} & \vdots \\
  		0 & \reflectbox{$\ddots$} & \reflectbox{$\ddots$} & & \vdots \\
  		1 & 0 & \cdots & \cdots & 0
  		\end{bmatrix} =\begin{bmatrix}
  		0 & \cdots & \cdots & 0 & 1 \\
  		\vdots & & \reflectbox{$\ddots$} & -1 & 0\\
  		\vdots & \reflectbox{$\ddots$} & \reflectbox{$\ddots$} & \reflectbox{$\ddots$} & \vdots \\
  		0 & \reflectbox{$\ddots$} & \reflectbox{$\ddots$} & & \vdots \\
  		(-1)^{n+1} & 0 & \cdots & \cdots & 0
  		\end{bmatrix}.
  		\]
  		
  		\begin{itemize}
  			\item Let $S_2=diag((-1)^{n+1},(-1)^{(n)}, \cdots , (-1)^3,(-1)^2$ be a diagonal matrix of order $n$. Then \[S_0\cdot S_2=S,\] i.e.,
  		\end{itemize} 
  		\[ \begin{bmatrix}
  		0 & \cdots & \cdots & 0 & 1 \\
  		\vdots & & \reflectbox{$\ddots$} & 1 & 0\\
  		\vdots & \reflectbox{$\ddots$} & \reflectbox{$\ddots$} & \reflectbox{$\ddots$} & \vdots \\
  		0 & \reflectbox{$\ddots$} & \reflectbox{$\ddots$} & & \vdots \\
  		1 & 0 & \cdots & \cdots & 0
  		\end{bmatrix} \cdot 
  		\begin{bmatrix}
  		(-1)^{n+1} & 0 & \cdots & \cdots & 0 \\
  		0 & (-1)^{n} & \ddots & & \vdots \\
  		\vdots & \ddots & \ddots & \ddots & \vdots \\
  		\vdots & & \ddots & \ddots & 0 \\
  		0 & \cdots & \cdots & 0 & (-1)^2
  		\end{bmatrix}
  		= \begin{bmatrix}
  		0 & \cdots & \cdots & 0 & 1 \\
  		\vdots & & \reflectbox{$\ddots$} & -1 & 0\\
  		\vdots & \reflectbox{$\ddots$} & \reflectbox{$\ddots$} & \reflectbox{$\ddots$} & \vdots \\
  		0 & \reflectbox{$\ddots$} & \reflectbox{$\ddots$} & & \vdots \\
  		(-1)^{n+1} & 0 & \cdots & \cdots & 0
  		\end{bmatrix}. \]
  		Using the above two facts, we obtain
  		\[ S \cdot \text{Step}1(t)\cdot S=S_0\cdot S_2 \cdot \text{Step}1(t)\cdot S_1\cdot S_0. \]
  		Now, in order to prove \Cref{sss}, we first prove that
  		\[ S_2 \cdot \text{Step}1(t)\cdot S_1=
  		\begin{pmatrix}
  		1 & 0&0&0& \cdots&0 \\
  		1 & 1&0&0& \cdots & 0 \\
  		0& x_1^{-1} & x_1^{-1} &0& \cdots &0 \\
  		0&0& x_1^{-1} x_2^{-1} & x_1^{-1}x_2^{-1} & \cdots &0 \\
  		\cdots& \cdots& \cdots& \cdots& \cdots& \cdots& \\
  		0&\cdots & \cdots &0& x_1^{-1}x_2^{-1}\cdots x_{n-2}^{-1} &x_1^{-1}x_2^{-1}\cdots x_{n-2}^{-1}
  		\end{pmatrix} \] upto projectivization. It can be done as follows : 
  		\begin{align*}
  		&\text{Step}1(t)\cdot S_1= \\
  		&\begin{pmatrix}
  		1 & 0&0&0& \cdots&0 \\
  		-1 & 1&0&0& \cdots & 0 \\
  		0& \frac{-1}{x_1} & \frac{1}{x_1} &0& \cdots &0 \\
  		0&0& \frac{-1}{x_1x_2} & \frac{-1}{x_1x_2} & \cdots &0 \\
  		\cdots& \cdots& \cdots& \cdots& \cdots& \cdots& \\
  		0&\cdots & \cdots &0&\frac{-1}{x_1x_2\cdots x_{n-2}} &\frac{-1}{x_1x_2\cdots x_{n-2}}
  		\end{pmatrix}
  		\begin{pmatrix}
  		(-1)^2 & 0 & \cdots & \cdots & 0 \\
  		0 & (-1)^3 & \ddots & & \vdots \\
  		\vdots & \ddots & \ddots & \ddots & \vdots \\
  		\vdots & & \ddots & \ddots & 0 \\
  		0 & \cdots & \cdots & 0 & (-1)^{n+1}
  		\end{pmatrix} \\
  		&= \begin{pmatrix}
  		1.(-1)^2 & 0&0& \cdots&0 \\
  		-1.(-1)^2 & 1.(-1)^3 &0& \cdots & 0 \\
  		0& \frac{-(-1)^3}{x_1} & \frac{(-1)^4}{x_1} & \cdots &0 \\
  		0&0& \frac{-(-1)^4}{x_1 x_2} &  \cdots &0 \\
  		\cdots& \cdots& \cdots& \cdots& \cdots& \\
  		0&\cdots & \cdots &\frac{-(-1)^n}{x_1x_2\cdots x_{n-2}} & \frac{(-1)^{n+1}}{x_1x_2\cdots x_{n-2}}
  		\end{pmatrix} \\
  		& =P, \text{ say.}
  		\end{align*}

  		\noindent Now, 	
  		\begin{align*}
  		&S_2 \cdot P = \begin{pmatrix}
  		1.(-1)^{n+3} & 0&0& \cdots&0 \\
  		-1.(-1)^{n+2} & 1.(-1)^{n+3} &0& \cdots & 0 \\
  		0& \frac{(-1)^{n+2}}{-x_1} & \frac{(-1)^{n+3}}{x_1} & \cdots &0 \\
  		0&0& \frac{(-1)^{n+2}}{-x_1 x_2} &  \cdots &0 \\
  		\cdots& \cdots& \cdots& \cdots& \cdots& \\
  		0&\cdots & \cdots & \frac{(-1)^{n+2}}{-x_1x_2\cdots x_{n-2}} & \frac{(-1)^{n+3}}{x_1x_2\cdots x_{n-2}}
  		\end{pmatrix} \\
  		&=(-1)^{n+3}.\begin{pmatrix}
  		1 & 0&0&0& \cdots&0 \\
  		1 & 1&0&0& \cdots & 0 \\
  		0& \frac{1}{x_1} & \frac{1}{x_1} &0& \cdots &0 \\
  		0&0& \frac{1}{x_1 x_2} & \frac{1}{x_1x_2} & \cdots &0 \\
  		\cdots& \cdots& \cdots& \cdots& \cdots& \cdots& \\
  		0&\cdots & \cdots &0& \frac{1}{x_1x_2\cdots x_{n-2}} &\frac{1}{x_1x_2\cdots x_{n-2}}
  		\end{pmatrix}.
  		\end{align*} 
  		Since $S_0$ is a permutation matrix that reverses the entries of a vector, pre-multiplication and post-multiplication by $S_0$ reverse the order of the rows and columns of the matrix $S_2 \cdot \text{Step}1(t)\cdot S_1 $ respectively. This completes the proof of the \Cref{sss}. We can easily check that $S \cdot \text{Step}1(t)\cdot S$ is the weight matrix of the planar network in \Cref{sstep1s}.	
  	\end{proof} 
\noindent  	Now observe that the matrix $^n\text{Step2}$ can be written as a block matrix in terms of $^{n-1}\text{Step1}$ (possibly with different invariants) in the following way: 
  	\[ ^n\text{Step2}=
  	\begin{pmatrix}
  	1 & 0_{1\times (n-1)} \\
  	0_{(n-1) \times 1} & ^{n-1}\text{Step1}
  	\end{pmatrix}.  \]
  	Similarly for all $1\leq k<n$, we can write
  	\begin{equation}\label{eq:stpk}
  	^n\text{Step(k)}=
  	\begin{pmatrix}
  	1 & 0_{1\times (n-1)} \\
  	0_{(n-1) \times 1} & ^{n-1}\text{Step(k-1)}
  	\end{pmatrix}=\cdots 
  	=\begin{pmatrix}
  	I_{k-1} & 0_{(k-1)\times(n-k+1)} \\
  	0_{(n-k+1)\times (k-1)} & ^{(n-k+1)}Step1
  	\end{pmatrix}.
  	\end{equation}
  	
\noindent and by \Cref{step1}  we have:

  		\[ ^n\!\text{Step(k)}=
  		\begin{pmatrix}
  		1&0&\cdots &&&&&\cdots &0 \\
  		0&1&0& \cdots &&&&\cdots & 0 \\
  		\vdots &\cdots & \ddots & \cdots &&&&\cdots & 0 \\
  		0 & \cdots &0& 1_{(k,k)} & 0&\cdots && \cdots&0 \\
  		0 & \cdots &0& -1 & 1&0&\cdots& \cdots & 0 \\
  		0 & \cdots &\cdots&0& \frac{-1}{x_1} & \frac{1}{x_1} &0& \cdots &0 \\
  		0 & \cdots &&\cdots&0& \frac{-1}{x_1 x_2} & \frac{1}{x_1x_2} & \cdots &0 \\
  		\cdots &\cdots& \cdots& \cdots& \cdots& \cdots& \cdots& \cdots& \cdots& \\
  		0&\cdots &&&& \cdots &0&\frac{-1}{x_1x_2\cdots x_{n-k-1}} &\frac{1}{x_1x_2\cdots x_{n-k-1}}
  		\end{pmatrix} \]
  		for some triangle invariants $x_1, x_2, \cdots, x_{n-k-1} $.
    
    \noindent As in the proof of \Cref{sss}, we can compute:
    
      \begin{align*}
  		& ^n\!S\cdot ^n\text{Step(k)} \cdot ^n\!S= \\
  		&\begin{pmatrix}
  		\frac{1}{x_1x_2\cdots x_{n-k-1}}& \frac{1}{x_1x_2\cdots x_{n-k-1}}&0&\cdots &&&&&0 \\
  		\cdots & \cdots & \cdots &&&& \cdots & \cdots & \cdots \\
  		0&\cdots & \frac{1}{x_1x_2} & \frac{1}{x_1x_2} &0& \cdots &&&0 \\
  		0&\cdots & 0 & \frac{1}{x_1} & \frac{1}{x_1}& 0& \cdots &&0\\
  		0& \cdots & & 0&1&1&0&\cdots & 0\\
  		0&\cdots & & \cdots &0&1_{(n+1-k,n+1-k)}&0&\cdots &0\\
  		0& \cdots & &&&\cdots & \ddots & \cdots & \vdots\\
  		0&\cdots&&&&\cdots&0&1&0\\
  		0&\cdots &&&&& \cdots & 0 & 1
  		\end{pmatrix}
  		\end{align*}
  	
   \noindent This is the weight matrix of the weighted planar network shown in \Cref{fig_stpk}.
  	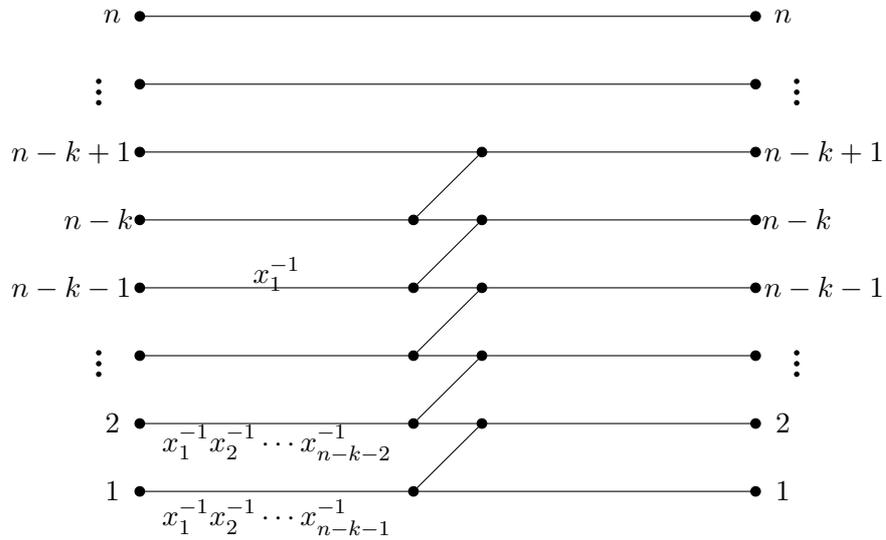
\begin{figure}[ht]
  		\centering
  		\begin{tikzpicture}[scale=0.9]
  		\filldraw (0,0) circle (2pt) (4,0) circle (2pt) (9,0) circle (2pt) (0,1) circle (2pt)  (4,1) circle (2pt) (5,1) circle (2pt) (4,2) circle (2pt) (5,2) circle (2pt) (4,3) circle (2pt) (5,3) circle (2pt) (4,4) circle (2pt) (5,4) circle (2pt) (5,5) circle (2pt) (0,2) circle (2pt) (0,3) circle (2pt) (0,4) circle (2pt) (0,5) circle (2pt) (9,1) circle (2pt) (9,2) circle (2pt) (9,3) circle (2pt) (9,4) circle (2pt) (9,5) circle (2pt) (9,6) circle (2pt) (9,7) circle (2pt) (0,6) circle (2pt) (0,7) circle (2pt);
  		\draw (0,0)-- (9,0);
  		\draw (0,1)-- (9,1);
  		\draw (0,2)-- (9,2);
  		\draw (0,3)-- (9,3);
  		\draw (0,4)-- (9,4);
  		\draw (0,5)-- (9,5);
  		\draw (0,6)-- (9,6);
  		\draw (0,7)-- (9,7);
  		\draw (4,0)-- (5,1);
  		\draw (4,1)-- (5,2);
  		\draw (4,3)-- (5,4);
  		\draw (4,2)-- (5,3);
  		\draw (4,4)-- (5,5);
  		\node at (-.4,0) {1};
  		\node at (9.4,0) {1};
  		\node at (-.4,1) {2};
  		\node at (9.4,1) {2};
  		\node at (-1,5) {$n-k+1$};
  		\node at (10,5) {$n-k+1$};
  		\node at (-.6,4) {$n-k$};
  		\node at (9.6,4) {$n-k$};
  		\node at (-1,3) {$n-k-1$};
  		\node at (10,3) {$n-k-1$};
  		\node at (9.6,2) {\Huge\vdots};
  		\node at (9.6,6) {\Huge\vdots};
  		\node at (-.6,2) {\Huge\vdots};
  		\node at (-.6,6) {\Huge\vdots};
  		\node at (9.4,7) {$n$};
  		\node at (-.4,7) {$n$};
  		\node at (2,-.4) {$x_1^{-1}x_2^{-1}\cdots x_{n-k-1}^{-1}$};
  		\node at (2,.7) {$x_1^{-1}x_2^{-1}\cdots x_{n-k-2}^{-1}$};
  		\node at (2,3.2) {$x_1^{-1}$};
  		\end{tikzpicture}
  		\caption{Planar network with weight matrix $^n\!S\cdot ^n\text{Step(k)} ^n\!S$ in $M(t)^{-1}$.}
  		\label{fig_stpk}
  	\end{figure}  

\vspace{.05in} 

   \noindent We can now complete: 

    \begin{prop}\label{mt}
  	The matrix $T(t)^{-1}E(e)$ is the weight matrix of a planar network with weights in the form of some products of the coordinates and their reciprocals.
  \end{prop}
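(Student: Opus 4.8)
The plan is to assemble $T(t)^{-1}E(e)$ out of the weighted planar networks already constructed in the preceding lemmas, using \Cref{prp:1}: concatenation of planar networks corresponds to multiplication of weight matrices, and the set of weights appearing in a concatenation is the union of the sets of weights of the pieces. First I would fix the factorization. By \Cref{SS} we have $S\cdot S=\pm I_n$, so up to projectivization $T(t)^{-1}E(e)=S\cdot M(t)^{-1}\cdot S\cdot S\cdot E(e)$ as in \eqref{tinve}; inserting copies of $S\cdot S$ between consecutive factors of the decomposition $M(t)^{-1}=Step(n-1)(t)\cdots Step1(t)$ from \eqref{Mtinv-decomp} then gives, up to projectivization,
\[
T(t)^{-1}E(e)=\Big(\prod_{k=n-1}^{1} S\cdot Step(k)(t)\cdot S\Big)\cdot\big(S\cdot E(e)\big),
\]
which is \eqref{eq:sms} postcomposed with $S\cdot E(e)$. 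Thus it suffices to realize each factor as the weight matrix of a planar network whose nonzero weights are products of Fock-Goncharov coordinates and their reciprocals, and then concatenate.

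Second, I would collect the pieces. By \Cref{se}, the matrix $S\cdot E(e)$ is the weight matrix of such a network, its nonzero weights being the products $z_1\cdots z_{n-1},\,z_2\cdots z_{n-1},\,\ldots,\,z_{n-1},\,1$ of edge invariants (see \eqref{SEgen}). For the remaining factors, \Cref{step1} computes $Step1(t)$ explicitly, \Cref{sss} computes $S\cdot Step1(t)\cdot S$ and identifies it with the weight matrix of the network in \Cref{sstep1s}, and the block identity \eqref{eq:stpk} reduces ${}^{n}Step(k)$ to ${}^{(n-k+1)}Step1$ bordered by an identity block, so the same computation realizes $S\cdot Step(k)(t)\cdot S$ as the weight matrix of the network in \Cref{fig_stpk}. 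In every case the nonzero weights are products of triangle invariants $x_i$ and their reciprocals $x_i^{-1}$, so they have the required form.

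Finally, I would concatenate: take the network of \Cref{fig_stpk} for $k=n-1,n-2,\ldots,1$ in that order, and append the network of \Cref{se} for $S\cdot E(e)$. By \Cref{prp:1} its weight matrix is the product $\big(\prod_{k} S\cdot Step(k)(t)\cdot S\big)\cdot(S\cdot E(e))=T(t)^{-1}E(e)$ up to projectivization, and since the weight set of a concatenation is the union of the weight sets of the pieces, every weight of this network is again a product of Fock-Goncharov coordinates and their reciprocals, which proves the proposition.

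The only genuine difficulty — already dealt with in \Cref{step1}--\Cref{sss} — is that $M(t)^{-1}$ itself has entries equal to $-1$, which cannot serve as weights of a planar network, since our weights must be products of coordinates and their reciprocals. Conjugating each block $Step(k)(t)$ by the signed anti-diagonal matrix $S$ reverses the order of rows and columns and absorbs the sign pattern, turning these $-1$'s into $+1$'s; once this normalization is available, the proposition is pure bookkeeping with \Cref{prp:1}.
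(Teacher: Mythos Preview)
Your proposal is correct and follows essentially the same approach as the paper's own proof: factorize $T(t)^{-1}E(e)$ via \eqref{tinve} and \eqref{eq:sms}, realize each $S\cdot Step(k)(t)\cdot S$ and $S\cdot E(e)$ as weight matrices of planar networks with the required weights using \Cref{se}, \Cref{step1}, \Cref{sss}, and the block reduction \eqref{eq:stpk}, then concatenate via \Cref{prp:1}. Your closing remark about the sign difficulty and how conjugation by $S$ resolves it is exactly the point the paper emphasizes before stating the proposition.
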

  \begin{proof}
  	From the above discussion we have obtained weighted planar networks for all the matrices $S\cdot Step(k)(t) \cdot S$ of equation \eqref{eq:sms} such that the weights are in the form of a product of some Fock-Goncharov coordinates and their reciprocals. Hence by \eqref{eq:sms} we can concatenate them  to obtain a weighted planar network with weight matrix $S\cdot M(t)^{-1} \cdot S$. From \eqref{tinve} we see that one further concatenation with the weighted planar network  arising from \Cref{se} yields the desired network with weight matrix $T(t)^{-1}E(e)$. 
  \end{proof}

\vspace{.05in}

 \noindent  We can now establish the the generalization of \Cref{prop3} to any $n\geq 3$:

 \begin{prop}\label{propn} Let $\rho: \pi_1(S) \to \pslnc$ be a (generic) representation as in the beginning of this subsection. Then there exists a  positive representation $\rho_0:\pi_1(S) \to \pslnr$ that dominates $\rho$ in the Hilbert length spectrum. Moreover, the lengths of the peripheral curves are unchanged. 
\end{prop}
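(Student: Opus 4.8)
The plan is to follow the proof of \Cref{prop3} closely, substituting the general building-block networks of \Cref{et} and \Cref{mt} for the explicit $3\times 3$ matrices used there. First, since $\rho$ is generic, fix a framing $\beta$ and an ideal triangulation $\tau$ so that the Fock-Goncharov coordinates of the framed representation $(\rho,\beta)$ are well-defined, and define $\rho_0\colon\pi_1(S)\to\pslnr$ exactly as in \Cref{defn:rho0}: replace each Fock-Goncharov coordinate by its modulus. Because all the resulting coordinates are real and positive, \Cref{puncHit} guarantees that $\rho_0$ is a positive representation.

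Next, fix $\gamma\in\pi_1(S)$ and write $\rho(\gamma)=T(t_k)^{\delta_k}E(e_k)\cdots T(t_1)^{\delta_1}E(e_1)$ as in \eqref{rhogammagen}. By \Cref{et} (for the factors with $\delta_i=1$) and \Cref{mt} (for those with $\delta_i=-1$), each building block is the weight matrix of a weighted planar network all of whose edge-weights are products of Fock-Goncharov coordinates of $(\rho,\beta)$ and their reciprocals. Concatenating these networks in the order prescribed by the product and invoking \Cref{prp:1}, one obtains a single weighted planar network $(\Gamma,\omega)$ with weight matrix $\rho(\gamma)$ whose edge-weights are still monomials in the coordinates and their reciprocals. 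Hence replacing each coordinate by its modulus replaces every edge-weight of $(\Gamma,\omega)$ by its modulus, so $\rho_0(\gamma)$ is the weight matrix of $(\Gamma,\lvert\omega\rvert)$. Now \Cref{spctrl} gives $\sigma(\rho(\gamma))\le\sigma(\rho_0(\gamma))$, i.e.\ the largest eigenvalue in modulus does not decrease; applying the identical argument to $\gamma^{-1}$ and using that the largest modulus eigenvalue of $A^{-1}$ is the reciprocal of the smallest modulus eigenvalue of $A$, the smallest eigenvalue in modulus does not increase. Combining, $\ell_\rho(\gamma)=\ln\lvert\lambda_n/\lambda_1\rvert\le\ln\lvert\lambda_n'/\lambda_1'\rvert=\ell_{\rho_0}(\gamma)$, which is the asserted domination. (Since $\ell_\rho(\gamma)$ is invariant under rescaling $\rho(\gamma)$ by a constant, it is harmless to work with the unnormalized matrix products here rather than their $\slnc$-representatives.)

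For the peripheral curves, suppose $\gamma$ is peripheral. As in the $n=3$ case of \Cref{prop3} — the lift of $\gamma$ winds around a single ideal vertex and so crosses the ideal triangles turning consistently in one direction — the signs $\delta_i$ are all equal; this is the general-$n$ analogue of the $\psltc$ statement \cite[Theorem 23]{CTT} and is a combinatorial fact about the monodromy graph near a puncture. Thus $\rho(\gamma)$ is a product either of matrices $T(t_i)E(e_i)$ or of matrices $T(t_i)^{-1}E(e_i)$. From \eqref{tgen} together with \Cref{stk}, $M(t)$ is lower triangular, while $S\cdot E(e)$ is diagonal by \eqref{SEgen}; hence $T(t)E(e)=M(t)\cdot S\cdot E(e)$ is lower triangular, and from \eqref{tinve} the conjugate $S\,M(t)^{-1}S$ is upper triangular, so $T(t)^{-1}E(e)$ is upper triangular. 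Consequently $\rho(\gamma)$ is triangular, its eigenvalues are its diagonal entries, and each such diagonal entry is a monomial in the Fock-Goncharov coordinates (being an entrywise product of the monomial diagonals of the building blocks). Replacing each coordinate by its modulus therefore merely replaces each eigenvalue of $\rho(\gamma)$ by its modulus, leaving the ratio of largest to smallest modulus unchanged, so $\ell_\rho(\gamma)=\ell_{\rho_0}(\gamma)$.

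The steps requiring care are essentially bookkeeping: checking that concatenation preserves the ``monomial weight'' property (immediate from \Cref{et}, \Cref{mt} and \Cref{prp:1}) and that triangularity of the building blocks persists for every $n$ (immediate from the block structure in \Cref{stk}, together with the diagonal form \eqref{SEgen} of $S\cdot E(e)$). The genuinely delicate part of the argument — producing a weighted planar network for $T(t)^{-1}E(e)$ in spite of the $-1$ entries appearing in $M(t)^{-1}$, via the device $S\cdot S=\pm I_n$ and the conjugated blocks $S\cdot\mathrm{Step}(k)(t)\cdot S$ — has already been carried out in \Cref{mt}, so there is no essential obstacle left in the present proposition.
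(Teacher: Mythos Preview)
Your proof is correct and follows essentially the same approach as the paper: define $\rho_0$ by taking moduli of the Fock--Goncharov coordinates, concatenate the building-block planar networks from \Cref{et} and \Cref{mt} to realize $\rho(\gamma)$ as a weight matrix, apply \Cref{spctrl} to both $\gamma$ and $\gamma^{-1}$, and handle peripheral curves via the triangularity of the building blocks. Your treatment of the peripheral case is in fact slightly more explicit than the paper's (which simply asserts ``as before, $\rho(\gamma)$ is triangular''), but the argument is the same.
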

  \begin{proof}\label{pt1} 
  The definition of the positive representation $\rho_0$ will follows the same procedure as that in  \Cref{defn:rho0}: Since $\rho$ is generic, pick a framing $\beta$ such that the framed representation $(\rho,\beta)$ has well-defined Fock-Goncharov coordinates with respect to the ideal triangulation $\tau$. Replace each Fock-Goncharov coordinate by its modulus, and define $\rho_0$ to be the representation corresponding to the resulting real and positive coordinates via  \Cref{FGparam}.
  	
The proof that $\rho_0$ defined above dominates $\rho$ is the same as that for $n=3$, in the proof of \Cref{prop3}:
Let $\gamma \in \pi_1(S)$  and let $(\Gamma\,\omega)$ be the weighted planar network with weight matrix $\rho(\gamma)$, that exists by concatenating the weighted planar networks corresponding building blocks as obtained in Lemmas \ref{et} and \ref{mt} -- see \eqref{rhogammagen}.  It follows from the definition of $\rho_0$, and the properties of the weights in these planar networks, that if one replaces each weight by its modulus, one obtains a weighted planar network $(\Gamma, \lvert \omega \rvert)$ with weight matrix $\rho_0(\gamma)$. By \Cref{spctrl} we know that the largest eigenvalue of $\rho_0(\gamma)$ is at least the largest eigenvalue in modulus of $\rho(\gamma)$.  Similarly, by applying the preceding argument to $\gamma^{-1}$, we also have that the smallest eigenvalue of $\rho_0(\gamma)$ is at most the smallest eigenvalue in modulus of $\rho(\gamma)$. From the definition of the Hilbert length of $\gamma$ (see \eqref{rlen}) we conclude that $\ell_\rho(\gamma) \leq \ell_{\rho_0}(\gamma)$. 

As before, if $\gamma$ is peripheral then $\rho(\gamma)$ is triangular and its diagonal entries are in the form of a product of Fock-Goncharov coordinates. From the definition of $\rho_0$, the matrix $\rho_0(\gamma)$ is also triangular and each diagonal entry is the  modulus of the corresponding entry in $\rho(\gamma)$. In particular, the largest and smallest eigenvalues in modulus are exactly the same for both matrices, and we have $\ell_{\rho}(\gamma)=\ell_{\rho_0}(\gamma)$. 
  \end{proof}

  \begin{rmk}\label{benFib}
  	From the way $\rho_0$ has been defined from a given generic representation $\rho$, we can see that the absolute values of each of their Fock-Goncharov coordinates are equal, and they lie in the same bending fiber (as defined in \Cref{defn:bf}). 
  \end{rmk}
  
\section{Domination in the translation length spectrum}\label{syms}
As defined previously, let $\rho_0$ be the positive representation defined from the given generic representation $\rho$ by equipping with a framing and replacing each Fock-Goncharov coordinate with its modulus. In this section, we shall prove that $\rho_0$ also dominates $\rho$ in the translation length spectrum. We first introduce some basic notions in the theory of majorization, that we shall need, mostly referring to \cite{Bhatia}. 

\subsection{Majorization and doubly stochastic matrices}
Consider the partial order relation on $\mathbb{R}^n$ defined as follows. 
\begin{defn}\cite[\S II.1]{Bhatia}\label{def:wm}  Let   $x=(x_1,x_2,\cdots,x_n),\ y=(y_1,y_2,\cdots,y_n)\in \mathbb{R}^n$ such that $x_1\ge\cdots \ge x_n$ and $y_1\ge\cdots \ge y_n$.
	We say $x$ is \emph{majorised} by $y$ and denote it by $x \prec y$ if 
	\begin{equation}\label{pSum}
	\sum_{i=1}^{k}x_i \le \sum_{i=1}^{k}y_i
	\end{equation}
	for all $k$ with $1\le k \le n$ and 
	\begin{equation}\label{allsum}
	\sum_{i=1}^{n}x_i = \sum_{i=1}^{n}y_i.
	\end{equation}
 We say $x$ is \emph{weakly submajorised} by $y$ and denote it by $x \prec_w y$ if \cref{pSum} is satisfied.
\end{defn}

\begin{defn}
A real and non-negative matrix $A=(a_{ij})_{n\times n}$ is called \emph{doubly stochastic} if
\begin{equation*}
\sum_{i=1}^{n}a_{ij}=1 \text{ for all } j \text{ and } \sum_{j=1}^{n}a_{ij}=1 \text{ for all } i.
\end{equation*}
The matrix $A$ is called \emph{doubly substochastic} if
\begin{equation*}
\sum_{i=1}^{n}a_{ij}\le 1 \text{ for all } j \text{ and } \sum_{j=1}^{n}a_{ij}\le 1 \text{ for all } i.
\end{equation*}
\end{defn}

The following are two basic facts: 

\begin{lem}\cite[Lemma 1]{vonNeumann}, \cite[C.1. Theorem]{MOA}\label{vn}
	For every doubly substochastic matrix $P$, there exists a doubly stochastic matrix $Q$ such that $P\le Q$.
\end{lem}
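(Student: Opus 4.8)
The plan is to realize a doubly substochastic matrix $P$ as a submatrix of a doubly stochastic matrix of larger size, and then to extract from that larger matrix a doubly stochastic matrix $Q$ of the original size dominating $P$ entrywise. First I would recall the two standard facts that drive the argument: (a) a nonnegative square matrix is doubly substochastic if and only if it can be completed to a doubly stochastic matrix by a procedure of adjoining extra rows and columns that absorb the row- and column-deficiencies; and (b) (Birkhoff--von Neumann) every doubly stochastic matrix is a convex combination of permutation matrices. Concretely, given $P = (p_{ij})_{n\times n}$ doubly substochastic, let $r_i = 1 - \sum_j p_{ij} \ge 0$ be the $i$-th row deficiency and $c_j = 1 - \sum_i p_{ij}\ge 0$ the $j$-th column deficiency; note $\sum_i r_i = \sum_j c_j =: s$.

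The construction I would carry out: form the $2n\times 2n$ matrix
\[
\widetilde{P} = \begin{pmatrix} P & R \\ C & D \end{pmatrix},
\]
where $R$ is chosen $n\times n$ nonnegative with row sums $r_1,\dots,r_n$, $C$ is $n\times n$ nonnegative with column sums $c_1,\dots,c_n$, and $D$ is chosen nonnegative so that all row and column sums of $\widetilde{P}$ equal $1$. A clean way to do this is to set $R$ to have $(i,j)$-entry $r_i c_j / s$ (when $s>0$; if $s=0$ then $P$ is already doubly stochastic and we take $Q = P$), similarly $C$ with entry $c_i r_j/s$, and $D$ with entry $r_i r_j / s$; a direct check of row and column sums shows $\widetilde{P}$ is doubly stochastic. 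By Birkhoff--von Neumann, $\widetilde{P} = \sum_k \lambda_k \Pi_k$ with $\lambda_k > 0$, $\sum_k \lambda_k = 1$, and each $\Pi_k$ a $2n\times 2n$ permutation matrix. Restricting each $\Pi_k$ to its top-left $n\times n$ block gives a matrix $B_k$ which is a partial permutation matrix (at most one $1$ per row and per column), hence doubly substochastic; and $\sum_k \lambda_k B_k$ is exactly the top-left block $P$.

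Now I would build $Q$ by ``rounding up'' each $B_k$ to an honest $n\times n$ permutation matrix $\Pi_k'$ with $\Pi_k' \ge B_k$ entrywise — this is possible because a partial permutation matrix of size $n$ always extends to a full permutation matrix (complete the partial matching using Hall's theorem on the bipartite complement). Then set $Q = \sum_k \lambda_k \Pi_k'$. Since each $\Pi_k'$ is doubly stochastic and the $\lambda_k$ form a convex combination, $Q$ is doubly stochastic; and since $\Pi_k' \ge B_k$ for every $k$ and all $\lambda_k \ge 0$, we get $Q \ge \sum_k \lambda_k B_k = P$ entrywise, as required.

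The step I expect to be the main obstacle — or at least the one needing the most care — is verifying that the block completion $\widetilde{P}$ is genuinely doubly stochastic, including handling the degenerate case $s = 0$ separately, and being careful that the rounding-up step $B_k \mapsto \Pi_k'$ can be done for every $k$ simultaneously and consistently (each $B_k$ is rounded up independently, so this is really just $n$ applications of Hall's marriage theorem, one per matrix, but one should confirm no hidden compatibility condition across different $k$ is needed — there is none, since $Q$ only needs $Q \ge P$ and $Q$ doubly stochastic, both of which are preserved under taking convex combinations of the independently-rounded pieces). Alternatively, if one prefers to avoid Birkhoff--von Neumann entirely, the same conclusion follows from a short induction on the number of entries of $P$ that are strictly less than what a doubly stochastic matrix would require, repeatedly pushing mass along an alternating cycle; I would mention this as a remark but present the Birkhoff-based proof as the main line since it is cleaner.
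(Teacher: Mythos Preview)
The paper does not give its own proof of this lemma; it simply records the statement with references to von~Neumann and to Marshall--Olkin--Arnold. So there is no in-paper argument to compare against, and your task is just to give a correct proof.

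Your overall strategy---embed $P$ as a block of a larger doubly stochastic matrix, apply Birkhoff--von~Neumann, then round up each partial permutation block to a full permutation---is exactly the classical argument and is sound. However, your explicit block completion is wrong. With $R_{ij}=r_i c_j/s$, $C_{ij}=c_i r_j/s$, $D_{ij}=r_i r_j/s$, the $j$-th column sum in the left half of $\widetilde P$ is
\[
\sum_i p_{ij} + \sum_i C_{ij} = (1-c_j) + \frac{r_j}{s}\sum_i c_i = 1 - c_j + r_j,
\]
which is $1$ only when $r_j=c_j$; likewise the bottom rows fail. So the ``direct check'' you promise would not go through. A correct (and simpler) completion is
\[
\widetilde P=\begin{pmatrix} P & \operatorname{diag}(r_1,\dots,r_n)\\ \operatorname{diag}(c_1,\dots,c_n) & P^{T}\end{pmatrix},
\]
whose row and column sums are all equal to $1$ by an immediate computation, and which is nonnegative since $P\ge 0$ and $r_i,c_j\ge 0$. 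With this fix the remainder of your argument (Birkhoff decomposition, extending each partial permutation $B_k$ to a permutation $\Pi_k'$ by matching the unused rows to the unused columns arbitrarily, and taking $Q=\sum_k\lambda_k\Pi_k'$) is correct as written; no cross-compatibility among the $B_k$ is needed, exactly as you note. The degenerate case $s=0$ (where $P$ is already doubly stochastic) is handled by $Q=P$, and the case $P=0$ poses no problem in the corrected construction since it no longer divides by $s$.
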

\begin{lem}\cite[Theorem II.2.8]{Bhatia}\label{dsmA}
	Let $x,y \in \mathbb{R}^n_+$. Then $x\prec_w y$ if and only if there exists a doubly substochastic matrix $A$ such that $x=Ay$.
\end{lem}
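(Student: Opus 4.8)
The plan is to prove the two implications separately. Throughout we use \Cref{vn} together with two classical facts about doubly stochastic matrices (see \cite{Bhatia} or \cite{MOA}): that $Qy \prec y$ for every doubly stochastic $Q$, and conversely that $u \prec v$ implies $u = Dv$ for some doubly stochastic $D$ (the Hardy--Littlewood--P\'olya theorem). Since permuting the coordinates of $x$ or $y$ replaces $A$ by a product of permutation matrices with $A$, and this preserves double substochasticity, we may assume throughout that $x$ and $y$ are arranged in decreasing order as in \Cref{def:wm}.

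For the implication ``$x = Ay$ with $A$ doubly substochastic $\Rightarrow x \prec_w y$'': by \Cref{vn} pick a doubly stochastic $Q$ with $A \le Q$ entrywise. Since $y \ge 0$ this gives $0 \le x = Ay \le Qy =: w$ componentwise, and $w \prec y$ because $w$ is a doubly stochastic image of $y$; in particular $w \prec_w y$. Moreover, whenever $0 \le x \le w$ componentwise, the sum of the $k$ largest entries of $x$ is at most the sum of the $k$ largest entries of $w$ for every $k$ (compare entrywise on the index set that realizes the left-hand side). Hence $x \prec_w w$, and transitivity of $\prec_w$ (immediate from \eqref{pSum}) yields $x \prec_w y$.

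For the converse, assume $x \prec_w y$. The first step --- which I expect to be the only genuine obstacle --- is to produce an auxiliary vector $u \in \mathbb{R}^n_+$, again in decreasing order, with $x_i \le u_i$ for all $i$ and $u \prec y$ (full majorization). This is a standard lemma in majorization theory (see \cite[Ch.~II]{Bhatia} and \cite{MOA}); informally, since $\sum_i x_i \le \sum_i y_i$ one starts from $x$ and raises the smallest coordinates, keeping the vector sorted and keeping every partial sum at or below the corresponding partial sum of $y$, until the total sums coincide --- an induction on $n$. Granting this, Hardy--Littlewood--P\'olya provides a doubly stochastic $D$ with $u = Dy$. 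Finally, set $\Lambda = \mathrm{diag}(\lambda_1, \dots, \lambda_n)$ with $\lambda_i = x_i / u_i$ if $u_i > 0$ and $\lambda_i = 0$ if $u_i = 0$ (in which case $x_i = 0$ as well); then each $\lambda_i \in [0,1]$ and $x = \Lambda u = (\Lambda D) y$. The matrix $A := \Lambda D$ is nonnegative, its $i$-th row sum equals $\lambda_i \le 1$, and its $j$-th column sum is $\sum_i \lambda_i d_{ij} \le \sum_i d_{ij} = 1$; so $A$ is doubly substochastic and $x = Ay$, which finishes the argument. Apart from the construction of $u$, the whole proof is routine bookkeeping with row and column sums together with the already-available reduction of doubly substochastic matrices to doubly stochastic ones.
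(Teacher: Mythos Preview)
Your argument is correct. The paper does not actually supply its own proof of this lemma; it simply quotes it as \cite[Theorem II.2.8]{Bhatia}, so there is nothing in the paper to compare against. What you have written is precisely the standard proof one finds in Bhatia: for the forward direction, dominate the doubly substochastic matrix by a doubly stochastic one via \Cref{vn} and use that doubly stochastic images are majorized; for the converse, interpolate a vector $u$ with $x \le u$ and $u \prec y$, apply Hardy--Littlewood--P\'olya to write $u = Dy$, and scale down by a diagonal matrix with entries in $[0,1]$. The only step you flag as an ``obstacle''---the existence of $u$---is indeed a separate lemma (it appears, for instance, as part of the equivalences around weak majorization in \cite[Ch.~II]{Bhatia}), and your water-filling sketch is the right idea; since the paper itself defers to Bhatia here, citing that lemma is entirely in keeping with the paper's level of detail.
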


\noindent In what follows, for a function $\phi: D (\subseteq \mathbb{R}) \rightarrow \mathbb{R}$ and $x=(x_1,\cdots,x_n) \in \mathbb{R}^n$, let $\phi(x)$ denote $(\phi(x_1), \cdots,\phi(x_n))$. 
\begin{lem}\label{cmiphi}
	Let $x=(x_1,\cdots,x_n),\ y=(y_1,\cdots,y_n) \in \mathbb{R}^n_+$ such that $x\prec_w y$. Then for any convex and monotonically increasing function $\phi:\mathbb{R}\rightarrow \mathbb{R}$, we have \[ \sum_{i=1}^{n}\phi(x_i)\le \sum_{i=1}^{n}\phi(y_i). \] 
\end{lem}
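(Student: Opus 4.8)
The statement is the classical fact that weak submajorization is preserved under applying a convex increasing function coordinatewise and summing. The plan is to reduce the weakly-submajorized case to the (honestly) majorized case by a ``padding'' trick, and then invoke the standard doubly-stochastic characterization together with Jensen's inequality (convexity).

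First I would reorder so that $x_1 \ge \cdots \ge x_n$ and $y_1 \ge \cdots \ge y_n$ (permuting coordinates changes neither side of the desired inequality nor the hypothesis $x \prec_w y$). By \Cref{dsmA}, there is a doubly substochastic matrix $A$ with $x = Ay$, and by \Cref{vn} there is a doubly stochastic matrix $Q$ with $A \le Q$ entrywise. Since $y \ge 0$, we get $x = Ay \le Qy =: z$, so $x_i \le z_i$ for all $i$, where $z \prec y$ by the Hardy–Littlewood–Pólya theorem (a vector of the form $Qy$ with $Q$ doubly stochastic is majorized by $y$); note $z \in \mathbb{R}^n_+$ as well. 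Because $\phi$ is monotonically increasing, $\phi(x_i) \le \phi(z_i)$ for each $i$, hence $\sum_i \phi(x_i) \le \sum_i \phi(z_i)$, and it remains to prove $\sum_i \phi(z_i) \le \sum_i \phi(y_i)$ for the genuine majorization $z \prec y$.

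For that last step I would use the characterization $z = Qy$ with $Q = (q_{ij})$ doubly stochastic together with Jensen's inequality: for each $i$,
\[
\phi(z_i) = \phi\Bigl(\sum_{j=1}^n q_{ij} y_j\Bigr) \le \sum_{j=1}^n q_{ij}\, \phi(y_j),
\]
since $\sum_j q_{ij} = 1$ and $\phi$ is convex. Summing over $i$ and using $\sum_i q_{ij} = 1$ gives
\[
\sum_{i=1}^n \phi(z_i) \le \sum_{i=1}^n \sum_{j=1}^n q_{ij}\,\phi(y_j) = \sum_{j=1}^n \phi(y_j),
\]
which closes the argument. (Alternatively, one could cite \cite[Theorem II.3.1]{Bhatia} directly for the implication $z \prec y \implies \sum \phi(z_i) \le \sum \phi(y_i)$ for convex $\phi$, but giving the short Jensen argument keeps things self-contained.)

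The only genuine subtlety — the ``main obstacle'' such as it is — is the reduction from weak submajorization to majorization: one must be careful that $Q \ge A$ entrywise \emph{and} $y \ge 0$ are both used to conclude $x \le Qy$, and that the monotonicity (not just convexity) of $\phi$ is what lets one discard the slack $z_i - x_i \ge 0$. Everything else is a direct application of the lemmas already recorded (\Cref{vn}, \Cref{dsmA}) plus convexity; no nontrivial computation is involved.
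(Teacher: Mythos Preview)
Your proposal is correct and follows essentially the same route as the paper: obtain a doubly substochastic $A$ with $x=Ay$ via \Cref{dsmA}, upgrade to a doubly stochastic $Q\ge A$ via \Cref{vn}, use monotonicity of $\phi$ to pass from $x$ to $Qy$, then apply Jensen and the column-sum condition to finish. The only cosmetic difference is that you name the intermediate vector $z=Qy$ and remark that $z\prec y$, whereas the paper chains the two inequalities $\phi(x_i)\le \phi\bigl(\sum_j q_{ij}y_j\bigr)\le \sum_j q_{ij}\phi(y_j)$ in one line; your Hardy--Littlewood--P\'olya aside is not actually used and can be dropped.
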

\begin{proof}
	The proof is similar to that of Theorem II.3.1 in \cite{Bhatia}. Let $x\prec_w y$. Then using \Cref{dsmA}, we get a doubly substochastic matrix $B=(b_{ij})_{n\times n}$ such that $x=By$. Also, using \Cref{vn}, we get a doubly stochastic matrix $A=(a_{ij})_{n\times n}$ such that $B\le A$. Now,\[ x_i=\sum_{j=1}^{n}b_{ij}y_j \le \sum_{j=1}^{n}a_{ij}y_j. \] Since $\phi$ is monotonically increasing and convex, we have 
	\[ \phi(x_i)\le \phi \bigg( \sum_{j=1}^{n}a_{ij}y_j \bigg) \le \sum_{j=1}^{n}a_{ij} \phi(y_j). \] Hence,
	\[ \sum_{i=1}^{n}\phi(x_i) \le \sum_{i=1}^{n} \sum_{j=1}^{n} a_{ij} \phi(y_j)= \sum_{j=1}^{n} \sum_{i=1}^{n} a_{ij} \phi(y_j)= \sum_{j=1}^{n} \phi(y_j), \] where the last equality follows from the fact that $A$ is a doubly stochastic matrix.
\end{proof}

\noindent We also have: 

\begin{lem}\label{log2phi}
	Let $\phi : \mathbb{R}_+ \rightarrow \mathbb{R}$ such that $\phi(e^t)$ is convex and monotone increasing in $t$. Let $x,y \in \mathbb{R}^n_+$ such that $\log x \prec_w \log y$. Then $\phi(x) \prec_w \phi(y)$.
\end{lem}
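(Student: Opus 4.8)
The plan is to reduce the statement to \Cref{cmiphi} together with a minimax description of partial sums. Put $\psi(t):=\phi(e^t)$; by hypothesis $\psi: \mathbb{R}\to\mathbb{R}$ is convex and monotonically increasing (and in particular $\phi$ itself is increasing on $\mathbb{R}_+$). Set $u=\log x$ and $v=\log y$, arranged as in \Cref{def:wm} so that $u_1\ge\cdots\ge u_n$ and $v_1\ge\cdots\ge v_n$; by hypothesis $u\prec_w v$. Since $\psi$ is increasing we have $\psi(u_1)\ge\cdots\ge\psi(u_n)$ and $\psi(v_1)\ge\cdots\ge\psi(v_n)$, while $\phi(x_i)=\psi(u_i)$ and $\phi(y_i)=\psi(v_i)$. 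Thus $\phi(x)\prec_w\phi(y)$ amounts exactly to the inequalities $\sum_{i=1}^{k}\psi(u_i)\le\sum_{i=1}^{k}\psi(v_i)$ for $1\le k\le n$, which is what I will establish.

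The key tool is the Ky Fan identity $\sum_{i=1}^{k}a_i=\min_{t\in\mathbb{R}}\big(kt+\sum_{i=1}^{n}(a_i-t)_+\big)$, valid for $a_1\ge\cdots\ge a_n$ and $1\le k\le n$, where $(s)_+:=\max(s,0)$ (the minimum being attained at $t=a_k$). Applying this with $a_i=\psi(u_i)$ and with $a_i=\psi(v_i)$, the desired inequality $\sum_{i=1}^{k}\psi(u_i)\le\sum_{i=1}^{k}\psi(v_i)$ follows at once provided that, for every fixed $t\in\mathbb{R}$, one has $\sum_{i=1}^{n}(\psi(u_i)-t)_+\le\sum_{i=1}^{n}(\psi(v_i)-t)_+$. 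But for fixed $t$ the map $s\mapsto(\psi(s)-t)_+$ is the composition of the convex increasing function $r\mapsto(r-t)_+$ with the convex increasing function $\psi$, hence is itself convex and monotonically increasing; so this last inequality is precisely the content of \Cref{cmiphi} applied to the vectors $u\prec_w v$ and that function. This completes the argument.

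The one technical wrinkle is that \Cref{cmiphi} is stated for vectors in $\mathbb{R}^n_+$, whereas $u=\log x$ and $v=\log y$ may have negative entries. This is harmless: translating both $u$ and $v$ by the same large positive constant $c$ preserves $\prec_w$ (each partial sum shifts by $kc$), and if one simultaneously replaces $\psi$ by the still convex and increasing function $t\mapsto\psi(t-c)$, none of the quantities above change; so one may assume $u,v\in\mathbb{R}^n_+$ from the outset. I expect this bookkeeping---the rearrangement observation and the reduction to nonnegative vectors---to be the only mild obstacle; the genuine content lies entirely in \Cref{cmiphi}, equivalently in the classical principle that convex monotone functions preserve weak submajorization (see \cite[\S II]{Bhatia} or \cite{MOA}).
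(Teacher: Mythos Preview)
Your proof is correct, but it takes a more circuitous route than the paper's. The paper simply observes that weak submajorization is inherited by prefixes: if $\log x \prec_w \log y$ in $\mathbb{R}^n$, then for every $k$ the truncated vectors $(\log x_1,\ldots,\log x_k)$ and $(\log y_1,\ldots,\log y_k)$ satisfy the same relation in $\mathbb{R}^k$ (the required partial-sum inequalities are a subset of the original ones). One then applies \Cref{cmiphi} directly to each such truncated pair with the function $t\mapsto\phi(e^t)$, yielding $\sum_{i=1}^{k}\phi(x_i)\le\sum_{i=1}^{k}\phi(y_i)$ for each $k$, which is exactly $\phi(x)\prec_w\phi(y)$. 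Your detour through the Ky Fan minimax identity and the auxiliary functions $s\mapsto(\psi(s)-t)_+$ is valid but unnecessary: the truncation argument gets to the same destination in one step.

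On the other hand, your explicit handling of the hypothesis $x,y\in\mathbb{R}^n_+$ in \Cref{cmiphi} via a translation is a point the paper glosses over; that bookkeeping is needed in the paper's argument just as in yours.
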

\begin{proof}
	Let $X_i=(x_1,\cdots, x_i)$ and $Y_i=(y_1,\cdots, y_i)$. Clearly, $X_n=x$ and $Y_n=y$. Now $\log X_n \prec_w \log Y_n \implies \log X_k \prec_w \log Y_k$ for all $k$ with $1\le k \le n$. Since $\phi(e^t)$ is convex and monotone increasing in $t$, using \Cref{cmiphi}, we get \[ \log X_k \prec_w \log Y_k \implies \sum_{i=1}^{k} \phi(e^{\log x_i}) \le \sum_{i=1}^{k} \phi(e^{\log y_i}) \implies \sum_{i=1}^{k} \phi(x_i) \le \sum_{i=1}^{k} \phi(y_i) \] for all $k$. Hence, $\phi(x) \prec_w \phi(y)$.
\end{proof}

\noindent We note the first application of these lemmas in the Theorem below -- see  \cite[Theorem II.3.6]{Bhatia}, and we refer the reader there for the proof. 

\begin{thm}(Weyl's Majorant Theorem)\label{wmt}
	Let $A$ be a $n\times n$ square matrix with singular values $\sigma_1\le \cdots\le \sigma_n$ and eigenvalues $\lambda_i$ such that $|\lambda_1|\le \cdots \le |\lambda_n|$. Let $\phi:\mathbb{R}_+ \rightarrow \mathbb{R}_+$ such that $\phi(e^x)$ is a monotone increasing convex function of $x$. Then \[ (\phi(|\lambda_n|),\cdots, \phi(|\lambda_1|)) \prec_w (\phi(\sigma_n),\cdots,\phi(\sigma_1)). \]
\end{thm}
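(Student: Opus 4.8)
The plan is to reduce the statement to the classical \emph{multiplicative Weyl inequalities} between eigenvalue moduli and singular values, and then feed those into \Cref{log2phi}.

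First I would record that for every $1\le k\le n$,
\[
\prod_{i=1}^{k}|\lambda_{n-i+1}| \;\le\; \prod_{i=1}^{k}\sigma_{n-i+1},
\]
with equality when $k=n$ (both sides being $|\det A|$). To prove this I would pass to the $k$-th exterior power: the operator $\Lambda^k A$ on $\Lambda^k\mathbb{C}^n$ has eigenvalues $\{\lambda_{i_1}\cdots\lambda_{i_k}: i_1<\cdots<i_k\}$ and singular values $\{\sigma_{i_1}\cdots\sigma_{i_k}: i_1<\cdots<i_k\}$; hence its spectral radius equals $\prod_{i=1}^{k}|\lambda_{n-i+1}|$ and its operator norm equals $\prod_{i=1}^{k}\sigma_{n-i+1}$. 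The elementary bound $\sigma(B)\le\|B\|$ for the spectral radius then yields the inequality. (Alternatively one can argue directly from a Schur triangularization $A=UTU^{*}$, using that the diagonal of $T$ lists the eigenvalues and bounding the relevant $k\times k$ minors by products of column norms, but the exterior-power route is cleaner.)

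Next, taking logarithms of the displayed inequalities and writing $x=(|\lambda_n|,\dots,|\lambda_1|)$ and $y=(\sigma_n,\dots,\sigma_1)$ — both already arranged in non-increasing order — the partial-sum estimates say precisely that $\log x \prec_w \log y$ in the sense of \Cref{def:wm}. Finally I would invoke \Cref{log2phi}: by hypothesis $\phi(e^{t})$ is convex and monotone increasing in $t$, so applying that lemma to the vectors $x,y$ gives $\phi(x)\prec_w\phi(y)$, which is exactly the asserted majorization $(\phi(|\lambda_n|),\dots,\phi(|\lambda_1|))\prec_w(\phi(\sigma_n),\dots,\phi(\sigma_1))$.

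The main obstacle I anticipate is establishing the multiplicative Weyl inequality cleanly; everything afterward is a formal consequence of the majorization machinery already developed (\Cref{cmiphi} and \Cref{log2phi}). The exterior-power argument is short but relies on knowing the eigenvalue/singular-value structure of $\Lambda^k A$ and on the fact that the operator norm of $\Lambda^k A$ is the product of the $k$ largest singular values of $A$; if one wishes to avoid exterior powers entirely, the Schur-triangularization argument is an alternative but requires a bit more care in the minor estimates.
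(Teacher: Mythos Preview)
Your proposal is correct and is exactly the standard argument: establish the multiplicative Weyl inequalities $\prod_{i=1}^{k}|\lambda_{n-i+1}|\le\prod_{i=1}^{k}\sigma_{n-i+1}$ via exterior powers (spectral radius of $\Lambda^k A$ bounded by its operator norm), rewrite these as $\log x\prec_w\log y$, and then apply \Cref{log2phi}. The paper itself does not give a proof of this theorem but simply refers the reader to \cite[Theorem II.3.6]{Bhatia}, whose proof is precisely the one you outline; so there is nothing to compare against, and your write-up would serve as a self-contained replacement for that citation.
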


\begin{cor}\label{corwmt}
	Assuming $\phi(x)=(\log x)^2$ in \Cref{wmt}, we obtain \[ \sum_{i=1}^{n}(\log|\lambda_i(A)|)^2 \le \sum_{i=1}^{n}(\log\sigma_i(A))^2, \] where $\lambda_i(A)$ are $\sigma_i(A)$ are the eigenvalues and singular values of $A$. (Here, recall that the \emph{singular values} of a matrix $A\in M_n$ are the positive square roots of the eigenvalues of the Hermitian positive semidefinite matrix $AA^*$.)
\end{cor}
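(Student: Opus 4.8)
The plan is to apply Weyl's Majorant Theorem (\Cref{wmt}), but there is a subtlety to resolve first: for $\phi(x)=(\log x)^2$ the function $\phi(e^t)=t^2$ is convex but \emph{not} monotone increasing on $\mathbb{R}$, so \Cref{wmt} cannot be applied to this $\phi$ directly. I would get around this by splitting the logarithm into its positive and negative parts. For $x>0$ write $\log^+x=\max(\log x,0)$ and $\log^-x=\max(-\log x,0)$, so that $(\log x)^2=(\log^+x)^2+(\log^-x)^2$ (for any $x$ at most one of $\log^+x,\log^-x$ is nonzero). It suffices to treat an invertible $A$, since if $A$ is singular then some $\sigma_i(A)=0$ and some $\lambda_i(A)=0$, and both sides of the claimed inequality equal $+\infty$.

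First I would apply \Cref{wmt} to $A$ with the function $\phi_+(x)=(\log^+x)^2$. Here $\phi_+(e^t)=(\max(t,0))^2$ is monotone increasing and convex (it is $C^1$ with non-negative, non-decreasing derivative $2\max(t,0)$), so the $k=n$ case of the weak submajorisation asserted by \Cref{wmt} gives
\begin{equation*}
\sum_{i=1}^{n}\bigl(\log^+|\lambda_i(A)|\bigr)^2 \ \le\ \sum_{i=1}^{n}\bigl(\log^+\sigma_i(A)\bigr)^2 .
\end{equation*}
Next I would run the same argument for $A^{-1}$, whose eigenvalues are the $\lambda_i(A)^{-1}$ and whose singular values are the $\sigma_i(A)^{-1}$. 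Since $\log^+(1/y)=\log^-y$, the left-hand side of \Cref{wmt} for $A^{-1}$ equals $\sum_i(\log^-|\lambda_i(A)|)^2$ and the right-hand side equals $\sum_i(\log^-\sigma_i(A))^2$, so \Cref{wmt} applied to $A^{-1}$ yields
\begin{equation*}
\sum_{i=1}^{n}\bigl(\log^-|\lambda_i(A)|\bigr)^2 \ \le\ \sum_{i=1}^{n}\bigl(\log^-\sigma_i(A)\bigr)^2 .
\end{equation*}
Adding the two inequalities and using $(\log x)^2=(\log^+x)^2+(\log^-x)^2$ then yields the desired bound.

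The only genuine obstacle is the initial observation that $t\mapsto t^2$ violates the monotonicity hypothesis of \Cref{wmt}; once one passes to $\log^{\pm}$ and pairs $A$ with $A^{-1}$, every remaining step is a routine check. As an alternative one could instead note that the multiplicative Weyl inequalities, together with $\prod_i|\lambda_i(A)|=|\det A|=\prod_i\sigma_i(A)$, show that $(\log\sigma_n(A),\dots,\log\sigma_1(A))$ \emph{strongly} majorises $(\log|\lambda_n(A)|,\dots,\log|\lambda_1(A)|)$, after which the Hardy--Littlewood--P\'olya inequality for the convex function $t\mapsto t^2$ finishes the argument; I would nonetheless prefer the route above, since it invokes only \Cref{wmt}, which is already at hand.
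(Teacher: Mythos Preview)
Your proposal is correct, and in fact more careful than the paper. The paper offers no proof beyond the phrase ``Assuming $\phi(x)=(\log x)^2$ in \Cref{wmt}'', treating it as a direct substitution and glossing over exactly the monotonicity issue you flag: $\phi(e^t)=t^2$ is not increasing on $\mathbb{R}$, so \Cref{wmt} as stated does not literally apply. Your fix via $\log^\pm$ and the passage to $A^{-1}$ is a clean way to make the deduction rigorous while invoking nothing beyond \Cref{wmt} itself.

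The alternative you sketch at the end---using $\prod_i|\lambda_i(A)|=|\det A|=\prod_i\sigma_i(A)$ to upgrade the weak multiplicative Weyl inequalities to \emph{strong} majorisation $\log|\lambda|\prec\log\sigma$, and then applying the Hardy--Littlewood--P\'olya inequality for the merely convex function $t\mapsto t^2$---is the standard textbook route (and presumably the argument the paper has in mind). It has the mild advantage of not needing $A$ to be invertible and of yielding the result in one stroke rather than two; your $\log^\pm$ argument has the advantage of staying strictly within the toolkit the paper actually states. Either is fine.
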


\subsection{The symmetric space $\mathbb{X}_n$}\label{symspace}
Let $\mathbb{X}_n$ be the set of Hermitian positive definite $n \times n$ matrices of determinant 1. Let us consider the action of $\slnc$ on $\mathbb{X}_n$ given as 
\begin{equation}\label{grac}
A\cdot M=AMA^*
\end{equation} for all $A\in \slnc$ and $M\in \mathbb{X}_n$, where $A^*$ is the conjugate transpose of $A$. We can easily check that\[ (A\cdot M)^*=(AMA^*)^*=AM^*A^*=AMA^*=A\cdot M;\ \det(AMA^*)=1 \]  and $\langle AMA^*(v),v \rangle=\langle MA^*(v),A^*(v)\rangle >0$ since $M$ is positive definite. Also \[ (AB)\cdot M=ABM(AB)^*=ABMB^*A^*=A\cdot (B\cdot M). \] Hence, it is a group action. Now $g$ is a stabilizer of $I$ if and only if $gg^*=I$. Hence, $g$ is a unitary matrix. So we get, $Stab(I)=\sun$. Now $Orb(I)=\{ AA^* \ |\ A\in \slnc \}$. Clearly, $Orb(I)\subseteq \mathbb{X}_n$. Also, let $M\in \mathbb{X}_n$. We know that every positive definite Hermitian matrix $M$ can be written as $M=BDB^{-1}$ for some $B\in \sun$, where $D$ is a diagonal matrix with all real and positive entries and $\det(D)=1$. Then 
\[ B\sqrt{D} (B\sqrt{D})^*=B\sqrt{D}\sqrt{D}^*B^*=BDB^{-1}=M. \]
Hence $M\in Orb(I)$. So we get, $Orb(I)= \mathbb{X}_n$, and hence 
\[ \mathbb{X}_n \cong \slnc/\sun \cong (\slnc/\{\pm I_n\})/(\sun/\{\pm I_n\}) \cong \pslnc/\mathrm{PSU}(n) \] 
as a topological group. Now the fact that $\slnc$ is semisimple and $\sun$ is a maximal compact subgroup makes $\mathbb{X}_n$ a symmetric space (see \cite{Helgason}). Assuming the Killing form to be \[ \langle A,B \rangle = \frac{1}{4}\cdot trace(AB),  \] the distance function on $\mathbb{X}_n $ is estimated by
\begin{equation}\label{trln}
\tau_I(A):=d_{\mathbb{X}_n}(I_n,A\cdot I_n)=d_{\mathbb{X}_n}(I_n,AA^*)=\sqrt{(\log \sigma_1(A))^2+\cdots + (\log \sigma_n(A))^2},
\end{equation} 
where $\sigma_i(B)$'s are the singular values of $B\in \mathbb{X}_n$. It is the translation length of $A$ at the origin. Readers are referred to \cite[Proposition 26.1]{Canary} for the details. It is important to note here that in \cite{Canary}, Canary has discussed about $\slnr/\son$, but here we are dealing with $\slnc/\sun$, as the same computation works in this case also. (For another exposition, readers are referred to \cite{Schwartz}.)

\vspace{.05in} 

\noindent The following seems to be well-known (see, for example, \cite{Parreau12}), but we provide a proof for the sake of completeness.

\begin{prop}\label{Tlequal}
	The translation length of an element $A \in \slnc$ in $\mathbb{X}_n$ is given by
	\[ \ell_{\mathbb{X}_n}(A)=\sqrt{\sum_{i=1}^{n}(\log|\lambda_i(A)|)^2}, \]
	where $\lambda_i(A)$ are the eigenvalues of $A$. Here, The {translation length} $ \ell_{\mathbb{X}_n}(A):= \inf_{x\in \mathbb{X}_n} d_{\mathbb{X}_n}(x,A\cdot x)$. 
\end{prop}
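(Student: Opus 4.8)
The plan is to reduce the computation of the translation length to an optimization over the similarity class of $A$, and then to pin that optimum down using Weyl's Majorant Theorem (in the form of \Cref{corwmt}) for the lower bound and an explicit conjugation for the upper bound.

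First I would use that $SL_n(\mathbb{C})$ acts transitively on $\mathbb{X}_n$ by isometries with $g\cdot I = gg^*$, and that $\{BB^* : B\in SL_n(\mathbb{C})\} = \mathbb{X}_n$ (established in the discussion preceding \eqref{trln}). Writing $x = BB^* = B\cdot I$ with $B\in SL_n(\mathbb{C})$ and setting $C = B^{-1}AB$, one has $A\cdot x = ABB^*A^* = B\cdot(CC^*)$, so by isometry $d_{\mathbb{X}_n}(x, A\cdot x) = d_{\mathbb{X}_n}(I, CC^*) = \tau_I(C)$, with $\tau_I$ as in \eqref{trln}. Since conjugation is insensitive to rescaling of $B$, the set $\{B^{-1}AB : B\in SL_n(\mathbb{C})\}$ is exactly the full $GL_n(\mathbb{C})$-similarity orbit of $A$; thus
\[ \ell_{\mathbb{X}_n}(A) = \inf_{B\in SL_n(\mathbb{C})}\tau_I(B^{-1}AB) = \inf_{C\,\sim\,A}\Big(\sum_{i=1}^n(\log\sigma_i(C))^2\Big)^{1/2}, \]
the infimum running over all $C$ similar to $A$.

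For the lower bound I would note that every $C\sim A$ has the same eigenvalues $\lambda_i(A)$ as $A$, and apply \Cref{corwmt}: $\sum_i(\log|\lambda_i(A)|)^2 = \sum_i(\log|\lambda_i(C)|)^2 \le \sum_i(\log\sigma_i(C))^2$, so $\tau_I(C)\ge\big(\sum_i(\log|\lambda_i(A)|)^2\big)^{1/2}$ for every $C\sim A$. For the matching upper bound I would produce an explicit minimizing family: conjugate $A$ to its Jordan form $J$, and then conjugate $J$ by $D_\epsilon = \mathrm{diag}(1,\epsilon,\epsilon^2,\dots,\epsilon^{n-1})$ (rescaled to lie in $SL_n(\mathbb{C})$, which does not change the conjugation). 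This multiplies every superdiagonal Jordan entry by $\epsilon$, so the resulting matrices $C_\epsilon \sim A$ converge as $\epsilon\to 0$ to $\mathrm{diag}(\lambda_1,\dots,\lambda_n)$. As all $C_\epsilon$ are invertible, the singular values and their logarithms vary continuously, whence $\tau_I(C_\epsilon)\to\big(\sum_i(\log\sigma_i(\mathrm{diag}(\lambda_i)))^2\big)^{1/2} = \big(\sum_i(\log|\lambda_i(A)|)^2\big)^{1/2}$. Combining the two bounds gives the stated formula. (When $A$ is diagonalizable the infimum is attained, taking $C = \mathrm{diag}(\lambda_i)$ and base point $x = PP^*$ for $A = P\,\mathrm{diag}(\lambda_i)\,P^{-1}$; in the non-semisimple case it is a genuine infimum, reflecting the absence of a translation axis.)

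I expect the fussiest part to be the first paragraph: verifying carefully that the base-point optimization becomes an honest optimization over the entire similarity orbit of $A$, and keeping the normalizations ($B\in SL_n(\mathbb{C})$ versus merely $|\det B| = 1$, and the identification $\{BB^*\} = \mathbb{X}_n$) consistent. The two inequalities themselves are short once \Cref{corwmt} is in hand and the elementary Jordan-form rescaling is set up.
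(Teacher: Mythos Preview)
Your argument is correct and follows the paper's proof almost exactly: both reduce the infimum over $\mathbb{X}_n$ to an infimum of $\sum_i(\log\sigma_i)^2$ over the similarity class of $A$, both obtain the lower bound from \Cref{corwmt}, and both realize the bound at $x=PP^*$ when $A$ is diagonalizable.

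The only genuine difference is in the non-diagonalizable case. The paper approximates $A$ by diagonalizable matrices $A_k\to A$ and passes to the limit, invoking continuity of $\ell_{\mathbb{X}_n}$; you instead stay inside the similarity class of $A$ and use the Jordan-form rescaling $D_\epsilon^{-1}JD_\epsilon$ to push the singular values toward $|\lambda_i|$. Your route is more self-contained: it produces explicit base points $x_\epsilon$ with $d(x_\epsilon,A\cdot x_\epsilon)\to f(A)$, which is precisely the upper bound one needs, whereas the paper's step ``$\ell_{\mathbb{X}_n}$ is continuous'' is asserted rather than proved (an infimum of continuous functions is only upper semicontinuous a priori, and upper semicontinuity by itself gives the inequality in the wrong direction here). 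So your variant actually closes a small gap while keeping the overall shape of the argument unchanged.
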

\begin{proof}
	Let $A \in \slnc$ and $x\in \mathbb{X}_n$. We have already seen that $Orb(I_n)=\mathbb{X}_n$. Hence $x=BB^*$ for some $B\in \slnc$. Now,
	\begin{equation*}
	\begin{split}
	d_{\mathbb{X}_n}(x,A\cdot x)&= d_{\mathbb{X}_n}(B^{-1}\cdot x,B^{-1}\cdot(A\cdot x))\\
	&= d_{\mathbb{X}_n}(I_n,B^{-1}ABB^*A^*(B^{-1})^*)\\
	&= d_{\mathbb{X}_n}(I_n,B^{-1}AB(B^{-1}AB)^*)\\
	&= \sqrt{\sum_{i=1}^{n}(\log\sigma_i(B^{-1}AB))^2} \text{ from \cref{trln}} \\
	&\ge \sqrt{\sum_{i=1}^{n}(\log|\lambda_i(B^{-1}AB)|)^2}\\
	&= \sqrt{\sum_{i=1}^{n}(\log|\lambda_i(A)|)^2}
	\end{split}
	\end{equation*} where the inequality follows from \Cref{corwmt}. Hence we have,
	\begin{equation}\label{tauineq}
	\ell_{\mathbb{X}_n}(A)=\inf_{x\in \mathbb{X}_n} d_{\mathbb{X}_n}(x,A\cdot x)\ge \sqrt{\sum_{i=1}^{n}(\log|\lambda_i(A)|)^2}.
	\end{equation}
	Now, suppose $A$ is a diagonalizable matrix. Then there exists $P\in \slnc$ such that $P^{-1}AP=D$ for some diagonal matrix $D$. Let us choose $x_0=PP^*\in \mathbb{X}_n$. Then
	\begin{equation}\label{eqachv}
	\begin{split}
	d_{\mathbb{X}_n}(x_0,A\cdot x_0)= \sqrt{\sum_{i=1}^{n}(\log\sigma_i(P^{-1}AP))^2}= \sqrt{\sum_{i=1}^{n}(\log\sigma_i(D))^2} &= \sqrt{\sum_{i=1}^{n}(\log|\lambda_i(D)|)^2}\\
	&= \sqrt{\sum_{i=1}^{n}(\log|\lambda_i(A)|)^2}
	\end{split}
	\end{equation}
	as, we can easily check that, the singular values of a diagonal matrix are the absolute values of its eigenvalues. So from \cref{tauineq} and \cref{eqachv} we obtain, for a diagonalizable matrix $A\in \slnc$, \[ \ell_{\mathbb{X}_n}(A)= \sqrt{\sum_{i=1}^{n}(\log|\lambda_i(A)|)^2}. \] Now suppose $A$ is non-diagonalizable. Then we know that, there is a sequence of diaonalizable matrices $\{A_k \}_{k=1}^n \subseteq \slnc$ such that $\lim\limits_{k\rightarrow \infty}A_k=A$. Now for all $k$, we have,
	\begin{equation*}
	\begin{split}
	\ell_{\mathbb{X}_n}(A_k) &= \sqrt{\sum_{i=1}^{n}(\log|\lambda_i(A_k)|)^2}\\
	\implies \lim\limits_{k\rightarrow \infty} \ell_{\mathbb{X}_n}(A_k) &= \lim\limits_{k\rightarrow \infty} \sqrt{\sum_{i=1}^{n}(\log|\lambda_i(A_k)|)^2}\\
	\implies \ell_{\mathbb{X}_n}(A) &= \sqrt{\sum_{i=1}^{n}(\log|\lambda_i(A)|)^2},
	\end{split}
	\end{equation*} since $\ell_{\mathbb{X}_n}(A)$ and each $\lambda_i(A)$ is a continuous function of the entries of $A$, i.e., these are continuous functions in the space of matrices.
\end{proof}

\begin{rmk}
    The {translation length} of an element $A\in \pslnc$ is defined by considering a lift $\widetilde{A} \in \slnc$ and applying \Cref{Tlequal}. This is well-defined since if  $A_1, A_2 \in \slnc$ are two lifts of $A\in \pslnc$, then $A_1=c A_2$ for some $c\in \mathbb{C}^*$ with $|c|=1$ and consequently $|\lambda_i(A_1)|= |\lambda_i(cA_2)|=|c||\lambda_i(A_2)|=|\lambda_i(A_2)|$ for each $1\leq i\leq n$.
\end{rmk}

\noindent We also note that inequalities of translation lengths of matrices in $\pslnc$ do not depend on the choice of representative in any projective class:

\begin{lem}
	Let $A,B \in \slnc$ and $c\in \mathbb{C}^\ast$ such that $cA,cB \in \slnc$. Then \[ \ell_{\mathbb{X}_n}(A)\le \ell_{\mathbb{X}_n}(B) \iff \ell_{\mathbb{X}_n}(cA)\le \ell_{\mathbb{X}_n}(cB). \]
\end{lem}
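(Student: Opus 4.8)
The plan is to reduce the entire statement to \Cref{Tlequal}, which expresses the translation length of an element of $SL_n(\mathbb{C})$ purely in terms of the moduli of its eigenvalues. The first step is to extract the constraint that the hypothesis places on $c$: since both $A$ and $cA$ lie in $SL_n(\mathbb{C})$, taking determinants gives $1 = \det(cA) = c^n\det A = c^n$, so $c$ is an $n$-th root of unity and in particular $|c| = 1$. (The same conclusion would follow from $B, cB \in SL_n(\mathbb{C})$.)

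Next I would use that the eigenvalues of $cA$ are precisely $\{c\,\lambda_i(A)\}_{i=1}^n$, hence $|\lambda_i(cA)| = |c|\,|\lambda_i(A)| = |\lambda_i(A)|$ for every $i$, because $|c| = 1$. By \Cref{Tlequal} the quantity $\ell_{\mathbb{X}_n}(cA)$ depends only on the numbers $|\lambda_i(cA)|$, so $\ell_{\mathbb{X}_n}(cA) = \ell_{\mathbb{X}_n}(A)$, and likewise $\ell_{\mathbb{X}_n}(cB) = \ell_{\mathbb{X}_n}(B)$. The asserted equivalence then becomes $\ell_{\mathbb{X}_n}(A) \le \ell_{\mathbb{X}_n}(B) \iff \ell_{\mathbb{X}_n}(A) \le \ell_{\mathbb{X}_n}(B)$, which is a tautology, completing the proof.

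There is essentially no obstacle here; the single point that requires a moment of care is that \Cref{Tlequal} is stated for matrices in $SL_n(\mathbb{C})$, so one must invoke the hypothesis that $A$ and $cA$ (respectively $B$ and $cB$) \emph{both} lie in $SL_n(\mathbb{C})$ before applying it — which is exactly what is assumed. This lemma serves as the bookkeeping device ensuring that inequalities between translation lengths of elements of $\pslnc$ are well-posed, independent of the choice of $\mathrm{SL}_n(\mathbb{C})$-lift, and it dovetails with the remark immediately preceding it on the well-definedness of $\ell_{\mathbb{X}_n}$ on $\pslnc$.
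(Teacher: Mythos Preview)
Your proof is correct. It differs slightly from the paper's approach: the paper does not first extract $|c|=1$, but instead expands
\[
\ell_{\mathbb{X}_n}^2(cA)=\sum_{i=1}^n\bigl(\log|c|+\log|\lambda_i(A)|\bigr)^2
= n(\log|c|)^2 + 2\log|c|\sum_{i=1}^n\log|\lambda_i(A)| + \ell_{\mathbb{X}_n}^2(A),
\]
and then uses $\sum_i\log|\lambda_i(A)|=\log|\det A|=0$ to kill the cross term, yielding $\ell_{\mathbb{X}_n}^2(cA)=n(\log|c|)^2+\ell_{\mathbb{X}_n}^2(A)$. Since the same additive constant appears for $B$, the inequality is preserved. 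Your route is more direct under the stated hypothesis (which forces $|c|=1$, making that constant vanish anyway and giving $\ell_{\mathbb{X}_n}(cA)=\ell_{\mathbb{X}_n}(A)$ outright); the paper's computation, by contrast, would establish the equivalence even without knowing $|c|=1$, relying only on $\det A=\det B=1$. Either argument is perfectly adequate here.
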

\begin{proof}
	We know that if $\lambda_1(A),\lambda_2(A),\cdots, \lambda_n(A)$ are the eigenvalues of $A$, then the corresponding eigenvalues of $cA$ are $c \lambda_1(A),c \lambda_2(A),\cdots, c \lambda_n(A)$. Now
	\begin{align*}
	\ell_{\mathbb{X}_n}^2(cA) &= \sum_{i=1}^{n}[\log(|c \lambda_i(A)|)]^2=\sum_{i=1}^{n}[\log|c|+\log |\lambda_i(A)|]^2 \\
	&=n(\log|c|)^2 + 2\log|c| \sum_{i=1}^{n}\log |\lambda_i(A)|+ \sum_{i=1}^{n}(\log |\lambda_i(A)|^2)\\
	&=n(\log|c| )^2 + \ell_{\mathbb{X}_n}^2(A),
	\end{align*}
	since $\sum \log \lambda_i(A)=\log \prod \lambda_i(A)=\log 1=0$. This proves the lemma.
\end{proof}

\subsection{Proof of the second domination result} 
We shall now prove that, for the given generic representation $\rho:\pi_1(S)\rightarrow \pslnc$, the positive representation $\rho_0:\pi_1(S)\rightarrow \pslnr$ defined earlier dominates $\rho$ in the translation length spectrum. 

\vspace{.05in} 

\noindent \textit{Notation.} For two matrices $A=(a_{ij})_{n\times n}$ and $B=(b_{ij})_{n\times n}$, we denote $A \le B$ if $a_{ij}\le b_{ij}$ for all $i,j$.  Moreover, of $A$ is a complex matrix, let $|A|=(|a_{ij}|)_{n\times n}$. 

\vspace{.05in} 

\noindent The following is the key observation, that uses the Lindstr\"{o}m's Lemma for weighted planar networks:

\begin{lem}\label{pEgnIneq}
	Let $\gamma \in \pi_1(S)$ and $\lambda_1, \lambda_2,\cdots,\lambda_n$ be the eigenvalues of $\rho(\gamma)$ such that $|\lambda_1|\le |\lambda_2|\le \cdots \le |\lambda_n|$. Let $\lambda'_1, \lambda'_2,\cdots,\lambda'_n$ be the eigenvalues of $\rho_0(\gamma)$ with $|\lambda'_1|\le |\lambda'_2|\le \cdots \le |\lambda'_n|$. Then \[ \prod_{i=1}^{k}|\lambda_{n-i+1}| \le \prod_{i=1}^{k}|\lambda'_{n-i+1}| \] for all $k$ with $1\le k \le n$.
\end{lem}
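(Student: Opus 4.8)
The plan is to exploit the fact, established in Propositions~\ref{et} and \ref{mt}, that $\rho(\gamma)$ is the weight matrix of a weighted planar network $(\Gamma,\omega)$ each of whose weights is a Laurent monomial in the Fock-Goncharov coordinates of $(\rho,\beta)$, and that $\rho_0(\gamma)$ is the weight matrix of the \emph{same} network $(\Gamma,\lvert\omega\rvert)$ with each weight replaced by its modulus. The key point is that the product $\prod_{i=1}^k \lvert\lambda_{n-i+1}\rvert$ of the $k$ largest eigenvalue moduli is governed by the $k$-th exterior power, and exterior powers of weight matrices are again weight matrices of planar networks — so I can apply \Cref{spctrl} to the $k$-th compound network rather than to $\Gamma$ itself.

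Concretely, first I would recall that for any $A\in GL_n(\mathbb{C})$ the $k$-th exterior power $\Lambda^k A$ acts on $\Lambda^k \mathbb{C}^n$, its entries are precisely the $k\times k$ minors $\Delta_{I,J}(A)$, and its eigenvalues are the products $\lambda_{i_1}\cdots\lambda_{i_k}$ over all $k$-subsets $\{i_1,\dots,i_k\}$; hence the spectral radius of $\Lambda^k A$ equals $\prod_{i=1}^k \lvert\lambda_{n-i+1}(A)\rvert$ (the largest such product, attained by taking the $k$ largest moduli). Next, by Lindström's Lemma (\Cref{linds}), each entry $\Delta_{I,J}$ of $\Lambda^k \rho(\gamma)$ is a sum, over vertex-disjoint path families in $\Gamma$ from sources $I$ to sinks $J$, of products of weights of $\Gamma$; this exhibits $\Lambda^k\rho(\gamma)$ as the weight matrix of a planar network $\Gamma^{(k)}$ of order $\binom{n}{k}$ whose weights are again Laurent monomials in the Fock-Goncharov coordinates. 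Replacing each coordinate by its modulus replaces each weight of $\Gamma^{(k)}$ by its modulus, and the resulting weight matrix is exactly $\Lambda^k\rho_0(\gamma)$ (since $\Lambda^k$ commutes with the substitution at the level of minors). Applying \Cref{spctrl} to $\Gamma^{(k)}$ then gives
\[
\sigma\bigl(\Lambda^k\rho(\gamma)\bigr)\le \sigma\bigl(\Lambda^k\rho_0(\gamma)\bigr),
\]
which is precisely $\prod_{i=1}^k \lvert\lambda_{n-i+1}\rvert\le \prod_{i=1}^k \lvert\lambda'_{n-i+1}\rvert$, for every $1\le k\le n$ (the case $k=n$ being the determinant, where equality holds since both sides are $1$).

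The main obstacle is the bookkeeping needed to verify that $\Lambda^k$ of the concatenated network really is a planar network in the sense of \Cref{wpnwm} — that is, that the compound graph $\Gamma^{(k)}$ can be drawn as an acyclic directed planar graph with $\binom{n}{k}$ sources and sinks and with path-weights matching the minors, rather than merely that the minor identity of Lindström holds formally. One clean way around this is to avoid constructing $\Gamma^{(k)}$ explicitly: instead observe that concatenation of planar networks corresponds to matrix multiplication (\Cref{prp:1}), that $\Lambda^k$ is multiplicative, and that each \emph{building block} $T(t)^{\pm1}E(e)$ already decomposes (via \Cref{M}, \Cref{se}, \Cref{stk}, \Cref{step1}, \Cref{sss}) into elementary factors of the form $F_i$, $H_i(x)$, $S$ and diagonal matrices; one checks directly that $\Lambda^k$ of each such elementary factor is again the weight matrix of an elementary planar network with monomial weights, and concatenates. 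Either way, once $\Gamma^{(k)}$ is in hand, the inequality is immediate from \Cref{spctrl}, so the real content is this compound-network construction, after which the argument is a short application of the tools already assembled in Section~2.
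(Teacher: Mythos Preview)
Your approach is essentially the same as the paper's: pass to the $k$-th exterior power, use Lindström's Lemma to control the minors, and conclude by spectral-radius monotonicity. The one difference is that you take an unnecessary detour that creates the very obstacle you flag. Once Lindström (\Cref{linds}) applied to the \emph{original} network $\Gamma$ tells you that each minor $\Delta_{I,J}(\rho(\gamma))$ is a sum $\sum_\alpha w_\alpha$ of weights of vertex-disjoint path families, the triangle inequality gives directly
\[
\bigl|\Delta_{I,J}(\rho(\gamma))\bigr| \;\le\; \sum_\alpha |w_\alpha| \;=\; \Delta_{I,J}(\rho_0(\gamma)),
\]
i.e.\ the entrywise bound $|\wedge^k\rho(\gamma)|\le \wedge^k\rho_0(\gamma)$ with the right-hand side nonnegative. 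That is all the proof of \Cref{spctrl} actually uses (the Collatz--Wielandt argument only needs $|w_{ij}|\le \mathbf{w}_{ij}$ with $\mathbf{w}_{ij}\ge 0$, not the planar-network packaging), so the spectral-radius inequality $\sigma(\wedge^k\rho(\gamma))\le \sigma(\wedge^k\rho_0(\gamma))$ follows immediately. There is no need to realise $\wedge^k\rho(\gamma)$ as the weight matrix of any auxiliary network $\Gamma^{(k)}$, and the bookkeeping you worry about simply does not arise. This is exactly how the paper proceeds.
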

\begin{proof}
	Using \Cref{spctrl}, we already know that $|\lambda_n|\le |\lambda'_n|$. Let us denote $\rho(\gamma)$ and $\rho_0(\gamma)$ by $A$ and $A'$ respectively in this proof. Let us consider the $k$-th antisymmetric power or the $k$-th Grassmann power of $A$, denoted by $\wedge^k A$ and defined on antisymmetric tensors $x_1\wedge \cdots \wedge x_k$ as 
	\begin{equation}
	\wedge^k A (x_1\wedge \cdots \wedge x_k) := Ax_1 \wedge \cdots \wedge Ax_k.
	\end{equation}
	Clear $\lambda_{i_1} \lambda_{i_2}\cdots \lambda_{i_k}$ are the eigenvalues of $\wedge^k A$ with $|\lambda_{n-k+1}\cdots \lambda_n|$ being its spectral radius. The entries of $\wedge^k A$ are the minors of $A$ of order $k$ (see \cite[\S I.5]{Bhatia} for details). Let $\Delta_{I,J}$ and $\Delta'_{I,J}$ be the $\{I,J\}$ minors of $A$ and $A'$ respectively, where $|I|=|J|=k\le n$. 	Let $f_1,f_2,\cdots f_j$ be the family of vertex disjoint paths from sources $I$ to sinks $J$ in the planar network of $A$ with weights $w_1,w_2,\cdots w_j$ respectively. Then $|w_1|,|w_2|,\cdots,|w_j|$ are the weights of those families in the planar network of $A'$. Then from Lindstr\"om's Lemma (see \Cref{linds}), we obtain  $\Delta_{I,J}=w_1+w_2+\cdots+w_j$, and $\Delta'_{I,J}=|w_1|+|w_2|+\cdots+|w_j|$. From the triangle inequality of complex numbers, we can conclude that $|\Delta_{I,J}|\le \Delta'_{I,J}$. Here we recall that $A'$ is a totally positive matrix unless $\gamma$ is a peripheral curve, in which case $A'$ is totally non-negative. That is, $\Delta'_{I,J}$ is real and positive for all $I,J$ if $\gamma$ is non-peripheral. Hence we have $|\wedge^k A|\le \wedge^k A'$. Now using \Cref{spctrl} once again, we conclude that $|\lambda_{n-k+1}\cdots \lambda_n| \le |\lambda'_{n-k+1}\cdots \lambda'_n|$.
\end{proof}   

\vspace{.05in}

\begin{proof}[Proof of \Cref{mainthm}]

    By \Cref{propn} it remains to prove the domination statement for the translation length spectrum, in particular, that $\rho_0$ dominates $\rho$, where $\rho_0$ is obtained by replacing each Fock-Goncharov coordinate by its modulus.

	The idea of this proof is similar to that of Weyl's Majorant Theorem (see \cite[Theorem II.3.6]{Bhatia}). Let $\gamma \in \pi_1(S)$ and $\lambda_1, \lambda_2,\cdots,\lambda_n$ be the eigenvalues of $\rho(\gamma)$ such that $|\lambda_1|\le |\lambda_2|\le \cdots \le |\lambda_n|$. Let $\lambda'_1, \lambda'_2,\cdots,\lambda'_n$ be the eigenvalues of $\rho_0(\gamma)$ with $|\lambda'_1|\le |\lambda'_2|\le \cdots \le |\lambda'_n|$. Let $\lambda = ( \lambda_n, \lambda_{n-1},\cdots,\lambda_1)$ and $\lambda'=( \lambda'_n, \lambda'_{n-1},\cdots,\lambda'_1)$. From \Cref{pEgnIneq}, we get \[ \prod_{i=1}^{k}|\lambda_{n-i+1}| \le \prod_{i=1}^{k}|\lambda'_{n-i+1}| \] for all $k$ with $1\le k \le n$. Applying $\log$ on both sides, we obtain
	\[ \sum_{i=1}^{k}\log|\lambda_{n-i+1}| \le \sum_{i=1}^{k}\log|\lambda'_{n-i+1}| \] for all $k$ with $1\le k \le n$. That is, $\log |\lambda| \prec_w \log |\lambda'|$ (see \Cref{def:wm}). 
 
 Now we shall apply \Cref{log2phi}. Consider the function $\phi: \mathbb{R}^n_+ \rightarrow \mathbb{R}$ such that $\phi(x) = (\log x)^2$. Clearly, $\phi(e^x)=x^2$ is monotone increasing and convex in $x$. Hence we get, $\phi(|\lambda|) \prec_w \phi(|\lambda'|)$, i.e., $(\log|\lambda|)^2 \prec_w (\log|\lambda'|)^2 $. So, we have \[ \sum_{i=1}^{k}(\log|\lambda_{n-i+1}|)^2 \le \sum_{i=1}^{k}(\log|\lambda'_{n-i+1}|)^2 \] for all $k$ with $1\le k \le n$. In particular, taking $k=n$, we obtain
	\[ \ell_{\mathbb{X}_n}(\rho(\gamma))=\sqrt{\sum_{i=1}^{n}(\log|\lambda_{i}|)^2} \le \sqrt{\sum_{i=1}^{n}(\log|\lambda'_{i}|)^2}=\ell_{\mathbb{X}_n}(\rho_0(\gamma)) \]
 where $\ell_{\mathbb{X}_n}(A)$ is the translation length in $\mathbb{X}_n$ (see \Cref{Tlequal}). 
 
	We can note here that if $\gamma$ is a peripheral curve in $\pi_1(S)$, then $\rho(\gamma)$ is a triangular matrix and its diagonal entries are in the form of product of some Fock-Goncharov coordinates. Hence the absolute values of the diagonal entries of $\rho(\gamma)$ and $\rho_0(\gamma)$ are identical. Since the eigenvalues of a triangular matrix are precisely its diagonal entries, we obtain $\ell_{\mathbb{X}_n}(\rho(\gamma))= \ell_{\mathbb{X}_n}(\rho_0(\gamma))$ for any peripheral curve $\gamma$.
\end{proof}
\begin{rmk}
    We can also prove a domination result for the length functions 
    \[ \ell_k(\rho(\gamma)):=\log\bigg| \frac{\lambda_{n-k+1}\cdots \lambda_n}{\lambda_1\cdots \lambda_k} \bigg| \] for $1\le k < n$, analogous to the length function for the $k$-th fundamental weight, as defined in \cite{Burg} (see also \cite{MZ}) in the context of $\pslnr$-representations. 
    Note that for $k=1$ we recover the Hilbert length \eqref{rlen}. Now, from \Cref{pEgnIneq}, we obtain $$|\lambda_{n-k+1}\cdots \lambda_n|\le |\lambda'_{n-k+1}\cdots \lambda'_n| \text{ and } |\lambda_{k+1}\cdots \lambda_n| \le |\lambda'_{k+1}\cdots \lambda'_n|.$$ Moreover, since we can always choose a representative of $\rho(\gamma)$ in $\slnc$, we have $$\prod_{i=1}^{n}|\lambda_i| = \prod_{i=1}^{n}|\lambda'_i| =1 \implies |\lambda_1 \cdots \lambda_k| \ge |\lambda'_1 \cdots \lambda'_k|$$ which readily yields \[ \ell_k(\rho(\gamma))=\log\bigg| \frac{\lambda_{n-k+1}\cdots \lambda_n}{\lambda_1\cdots \lambda_k} \bigg| \le \log\bigg| \frac{\lambda'_{n-k+1}\cdots \lambda'_n}{\lambda'_1\cdots \lambda'_k} \bigg|= \ell_k(\rho_0(\gamma)) \] for all $1\le k<n$. 
\end{rmk}

 \bibliographystyle{alpha}
 \bibliography{references}

\begin{thebibliography}{MMMZ23}

\bibitem[AB20]{AB}
Dylan G.~L. Allegretti and Tom Bridgeland.
\newblock The monodromy of meromorphic projective structures.
\newblock {\em Trans. Amer. Math. Soc.}, 373(9):6321--6367, 2020.

\bibitem[BD14]{BD14}
Francis Bonahon and Guillaume Dreyer.
\newblock Parameterizing {H}itchin components.
\newblock {\em Duke Math. J.}, 163(15):2935--2975, 2014.

\bibitem[BD17]{BD17}
Francis Bonahon and Guillaume Dreyer.
\newblock Hitchin characters and geodesic laminations.
\newblock {\em Acta Math.}, 218(2):201--295, 2017.

\bibitem[Bha97]{Bhatia}
Rajendra Bhatia.
\newblock {\em Matrix analysis}, volume 169 of {\em Graduate Texts in
  Mathematics}.
\newblock Springer-Verlag, New York, 1997.

\bibitem[BIPP21]{Burg}
Marc Burger, Alessandra Iozzi, Anne Parreau, and Maria~Beatrice Pozzetti.
\newblock The real spectrum compactification of character varieties:
  characterizations and applications.
\newblock {\em C. R. Math. Acad. Sci. Paris}, 359:439--463, 2021.

\bibitem[Bre95]{Brenti}
Francesco Brenti.
\newblock Combinatorics and total positivity.
\newblock {\em Journal of Combinatorial Theory, Series A}, 71(2):175--218,
  1995.

\bibitem[Can20]{Canary}
Richard Canary.
\newblock Anosov representations: Informal lecture notes, 2020.
\newblock Available at
  \url{http://www.math.lsa.umich.edu/~canary/Anosovlecnotes.pdf}.

\bibitem[CN12]{CN}
David~A. Cardon and Pace~P. Nielsen.
\newblock Nonnegative minors of minor matrices.
\newblock {\em Linear Algebra Appl.}, 436(7):2187--2200, 2012.

\bibitem[CTT20]{CTT}
Alex Casella, Dominic Tate, and Stephan Tillmann.
\newblock {\em Moduli spaces of real projective structures on surfaces},
  volume~38 of {\em MSJ Memoirs}.
\newblock Mathematical Society of Japan, Tokyo, 2020.

\bibitem[CZZ22]{CZZ}
Richard Canary, Tengren Zhang, and Andrew Zimmer.
\newblock Cusped {H}itchin representations and {A}nosov representations of
  geometrically finite {F}uchsian groups.
\newblock {\em Adv. Math.}, 404:Paper No. 108439, 67, 2022.

\bibitem[DL20]{DaiLi1}
Song Dai and Qiongling Li.
\newblock On cyclic {H}iggs bundles.
\newblock {\em Math. Ann.}, 376(3-4):1225--1260, 2020.

\bibitem[DL22]{DaiLi2}
Song Dai and Qiongling Li.
\newblock Domination results in {$n$}-{F}uchsian fibers in the moduli space of
  {H}iggs bundles.
\newblock {\em Proc. Lond. Math. Soc. (3)}, 124(4):427--477, 2022.

\bibitem[DMSV19]{DMSV}
Georgios Daskalopoulos, Chikako Mese, Andrew Sanders, and Alina Vdovina.
\newblock Surface groups acting on {${\rm CAT} (-1)$} spaces.
\newblock {\em Ergodic Theory Dynam. Systems}, 39(7):1843--1856, 2019.

\bibitem[Dou20]{Douglas}
Daniel~C. Douglas.
\newblock {\em Classical and {Q}uantum {T}races {C}oming from {SL}n({C}) and
  {U}q(sln)}.
\newblock ProQuest LLC, Ann Arbor, MI, 2020.
\newblock Thesis (Ph.D.)--University of Southern California.

\bibitem[DT16]{DT}
Bertrand Deroin and Nicolas Tholozan.
\newblock Dominating surface group representations by {F}uchsian ones.
\newblock {\em Int. Math. Res. Not. IMRN}, (13):4145--4166, 2016.

\bibitem[FG06]{FG06}
Vladimir Fock and Alexander Goncharov.
\newblock Moduli spaces of local systems and higher teichm{\"u}ller theory.
\newblock {\em Publications Math{\'e}matiques de l'IH{\'E}S}, 103:1--211, 2006.

\bibitem[FG07]{FG07}
Vladimir Fock and Alexander Goncharov.
\newblock Moduli spaces of convex projective structures on surfaces.
\newblock {\em Adv. Math.}, 208(1):249--273, 2007.

\bibitem[FZ00]{FZ}
Sergey Fomin and Andrei Zelevinsky.
\newblock Total positivity: tests and parametrizations.
\newblock {\em Math. Intelligencer}, 22(1):23--33, 2000.

\bibitem[GG23]{GG}
Spandan Ghosh and Subhojoy Gupta.
\newblock Meromorphic projective structures: Signed spaces, grafting and
  monodromy.
\newblock 2023.
\newblock Preprint. Available at \url{https://arxiv.org/abs/2311.14299}.

\bibitem[GKW15]{GKW}
Fran\c{c}ois Gu\'{e}ritaud, Fanny Kassel, and Maxime Wolff.
\newblock Compact anti--de {S}itter 3-manifolds and folded hyperbolic
  structures on surfaces.
\newblock {\em Pacific J. Math.}, 275(2):325--359, 2015.

\bibitem[GMN14]{GMN}
Davide Gaiotto, Gregory~W. Moore, and Andrew Neitzke.
\newblock Spectral networks and snakes.
\newblock {\em Ann. Henri Poincar\'{e}}, 15(1):61--141, 2014.

\bibitem[GS23]{GSu}
Subhojoy Gupta and Weixu Su.
\newblock Dominating surface-group representations into {${\mathrm
  PSL}_2(\mathbb{C})$} in the relative representation variety.
\newblock {\em Manuscripta Math.}, 172(3-4):1169--1186, 2023.

\bibitem[Hel01]{Helgason}
Sigurdur Helgason.
\newblock {\em Differential geometry, {L}ie groups, and symmetric spaces},
  volume~34 of {\em Graduate Studies in Mathematics}.
\newblock American Mathematical Society, Providence, RI, 2001.
\newblock Corrected reprint of the 1978 original.

\bibitem[Hit92]{Hitchin}
Nigel Hitchin.
\newblock Lie groups and {T}eichm\"{u}ller space.
\newblock {\em Topology}, 31(3):449--473, 1992.

\bibitem[HJ13]{HJ}
Roger~A. Horn and Charles~R. Johnson.
\newblock {\em Matrix analysis}.
\newblock Cambridge University Press, Cambridge, second edition, 2013.

\bibitem[Lab06]{Labourie}
Fran\c{c}ois Labourie.
\newblock Anosov flows, surface groups and curves in projective space.
\newblock {\em Invent. Math.}, 165(1):51--114, 2006.

\bibitem[Lin73]{Lind}
Bernt Lindstr\"{o}m.
\newblock On the vector representations of induced matroids.
\newblock {\em Bull. London Math. Soc.}, 5:85--90, 1973.

\bibitem[LM09]{LabMc}
Fran\c{c}ois Labourie and Gregory McShane.
\newblock Cross ratios and identities for higher {T}eichm\"{u}ller-{T}hurston
  theory.
\newblock {\em Duke Math. J.}, 149(2):279--345, 2009.

\bibitem[Mey23]{Meyer}
Carl~D. Meyer.
\newblock {\em Matrix analysis and applied linear algebra}.
\newblock Society for Industrial and Applied Mathematics (SIAM), Philadelphia,
  PA, second edition, [2023] \copyright 2023.

\bibitem[MMMZ23]{Tengren2023}
Sara Maloni, Giuseppe Martone, Filippo Mazzoli, and Tengren Zhang.
\newblock $d$-pleated surfaces and their shear-bend coordinates.
\newblock 2023.

\bibitem[MOA11]{MOA}
Albert~W. Marshall, Ingram Olkin, and Barry~C. Arnold.
\newblock {\em Inequalities: theory of majorization and its applications}.
\newblock Springer Series in Statistics. Springer, New York, second edition,
  2011.

\bibitem[MZ19]{MZ}
Giuseppe Martone and Tengren Zhang.
\newblock Positively ratioed representations.
\newblock {\em Comment. Math. Helv.}, 94(2):273--345, 2019.

\bibitem[Pal13]{Palesi}
Frederic Palesi.
\newblock {Introduction to positive representations and {F}ock-{G}oncharov
  coordinates}.
\newblock 2013.
\newblock hal-01218570. Available at \url{https://hal.science/hal-01218570}.

\bibitem[Par12]{Parreau12}
Anne Parreau.
\newblock Compactification d'espaces de repr\'{e}sentations de groupes de type
  fini.
\newblock {\em Math. Z.}, 272(1-2):51--86, 2012.

\bibitem[Sag23]{Sagman}
Nathaniel Sagman.
\newblock Infinite energy equivariant harmonic maps, domination, and anti--de
  {S}itter 3-manifolds.
\newblock {\em J. Differential Geom.}, 124(3):553--598, 2023.

\bibitem[Sch13]{Schwartz}
Rich Schwartz.
\newblock {The Symmetric Space for {$SL_n(\mathbb{R})$}}, 27th November 2013.
\newblock Math 2710M course notes, available at
  \url{https://www.math.brown.edu/reschwar/M2710M/symm.pdf}.

\bibitem[Tho17]{Tholozan}
Nicolas Tholozan.
\newblock Volume entropy of {H}ilbert metrics and length spectrum of {H}itchin
  representations into {${\rm PSL}(3,\mathbb{R})$}.
\newblock {\em Duke Math. J.}, 166(7):1377--1403, 2017.

\bibitem[Thu22]{Thurston86}
William~P. Thurston.
\newblock Minimal stretch maps between hyperbolic surfaces.
\newblock In {\em Collected works of {W}illiam {P}. {T}hurston with commentary.
  {V}ol. {I}. {F}oliations, surfaces and differential geometry}, pages
  533--585. Amer. Math. Soc., Providence, RI, [2022] \copyright 2022.
\newblock 1986 preprint, 1998 eprint.

\bibitem[vN53]{vonNeumann}
John von Neumann.
\newblock A certain zero-sum two-person game equivalent to the optimal
  assignment problem.
\newblock In {\em Contributions to the theory of games, vol. 2}, volume no. 28
  of {\em Ann. of Math. Stud.}, pages 5--12. Princeton Univ. Press, Princeton,
  NJ, 1953.

\bibitem[Wie18]{Wienhard}
Anna Wienhard.
\newblock An invitation to higher {T}eichm\"{u}ller theory.
\newblock In {\em Proceedings of the {I}nternational {C}ongress of
  {M}athematicians---{R}io de {J}aneiro 2018. {V}ol. {II}. {I}nvited lectures},
  pages 1013--1039. World Sci. Publ., Hackensack, NJ, 2018.

\bibitem[Zha15]{Zhang}
Tengren Zhang.
\newblock Degeneration of {H}itchin representations along internal sequences.
\newblock {\em Geom. Funct. Anal.}, 25(5):1588--1645, 2015.

\end{thebibliography}
 
\end{document}